\title
{On distance covariance in metric and Hilbert spaces}
\date{29 October, 2019}
\author{Svante Janson}
\thanks{Partly supported by the Knut and Alice Wallenberg Foundation}
\address{Department of Mathematics, Uppsala University, PO Box 480,
SE-751~06 Uppsala, Sweden}
\email{svante.janson@math.uu.se}
\newcommand\urladdrx[1]{{\urladdr{\def~{{\tiny$\sim$}}#1}}}
\subjclass[2010]{62H20; 60B11, 62G20} 
\numberwithin{equation}{section}
\renewcommand\le{\leqslant}
\renewcommand\ge{\geqslant}
\theoremstyle{plain}
\newtheorem{theorem}{Theorem}[section]
\newtheorem{lemma}[theorem]{Lemma}
\newtheorem{corollary}[theorem]{Corollary}
\theoremstyle{definition}
\newtheorem{exampleqqq}[theorem]{Example}
\newenvironment{example}{\begin{exampleqqq}}
  {\hfill\qedsymbol\end{exampleqqq}}
\newtheorem{remarkqqq}[theorem]{Remark}
\newenvironment{remark}{\begin{remarkqqq}}
  {\hfill\qedsymbol\end{remarkqqq}}
\newtheorem{definition}[theorem]{Definition}
\newtheorem{problem}[theorem]{Problem}
\theoremstyle{remark}
\newenvironment{romenumerate}[1][-10pt]{
\addtolength{\leftmargini}{#1}\begin{enumerate}
 }{\end{enumerate}}
\newcounter{oldenumi}
{\setcounter{oldenumi}{\value{enumi}}
\begin{romenumerate} \setcounter{enumi}{\value{oldenumi}}}
{\end{romenumerate}}
\newenvironment{dcenumerate}{
\begin{enumerate}
 }{\end{enumerate}}
\newenvironment{dcenumerateq}
{\setcounter{oldenumi}{\value{enumi}}
\begin{dcenumerate} \setcounter{enumi}{\value{oldenumi}}}
{\end{dcenumerate}}
\newcounter{thmenumerate}
\newenvironment{thmenumerate}
{\setcounter{thmenumerate}{0}%
 \def\item{\par
 \refstepcounter{thmenumerate}\textup{(\roman{thmenumerate})\enspace}}
}
{}
\newcounter{xenumerate}   
\newcommand\pfitemx[1]{\par#1:}
\newcommand\pfitemref[1]{\pfitemx{\ref{#1}}}
\newcommand\pfcase[2]{\smallskip\noindent\emph{Case #1: #2.}\noindent}
\newcommand{\refT}[1]{Theorem~\ref{#1}}
\newcommand{\refTs}[1]{Theorems~\ref{#1}}
\newcommand{\refC}[1]{Corollary~\ref{#1}}
\newcommand{\refL}[1]{Lemma~\ref{#1}}
\newcommand{\refR}[1]{Remark~\ref{#1}}
\newcommand{\refRs}[1]{Remarks~\ref{#1}}
\newcommand{\refS}[1]{Section~\ref{#1}}
\newcommand{\refSs}[1]{Sections~\ref{#1}}
\newcommand{\refSS}[1]{Section~\ref{#1}}
\newcommand{\refP}[1]{Problem~\ref{#1}}
\newcommand{\refD}[1]{Definition~\ref{#1}}
\newcommand{\refDs}[1]{Definitions~\ref{#1}}
\newcommand{\refE}[1]{Example~\ref{#1}}
\newcommand{\refEs}[1]{Examples~\ref{#1}}
\newcommand{\refApp}[1]{Appendix~\ref{#1}}
\newcommand{\refTab}[1]{Table~\ref{#1}}
\xdef\klockan{\the\count1.0\the\count255}
\xdef\klockan{\the\count1.\the\count255}\fi
\newcommand{\sumn}{\sum_{n=1}^\infty}
\newcommand{\sumin}{\sum_{i=1}^n}
\newcommand\set[1]{\ensuremath{\{#1\}}}
\newcommand\bigset[1]{\ensuremath{\bigl\{#1\bigr\}}}
\newcommand\xpar[1]{(#1)}
\newcommand\bigpar[1]{\bigl(#1\bigr)}
\newcommand\Bigpar[1]{\Bigl(#1\Bigr)}
\newcommand\sqpar[1]{[#1]}
\newcommand\bigsqpar[1]{\bigl[#1\bigr]}
\newcommand\Bigsqpar[1]{\Bigl[#1\Bigr]}
\newcommand\xcpar[1]{\{#1\}}
\newcommand\bigcpar[1]{\bigl\{#1\bigr\}}
\newcommand\abs[1]{\lvert#1\rvert}
\newcommand\bigabs[1]{\bigl\lvert#1\bigr\rvert}
\newcommand\Bigabs[1]{\Bigl\lvert#1\Bigr\rvert}
\def\rompar(#1){\textup(#1\textup)}    
\newcommand\innprod[1]{\langle#1\rangle}
\newcommand\biginnprod[1]{\bigl\langle#1\bigr\rangle}
\def\xexp(#1){e^{#1}}
\newcommand\ntoo{\ensuremath{{n\to\infty}}}
\newcommand\Ntoo{\ensuremath{{N\to\infty}}}
\newcommand\asntoo{\text{as }\ntoo}
\newcommand\Mtoo{\ensuremath{{M\to\infty}}}
\newcommand\xtoo{\ensuremath{{x\to\infty}}}
\newcommand\bmin{\wedge}
\newcommand\norm[1]{\lVert#1\rVert}
\newcommand\bignorm[1]{\bigl\lVert#1\bigr\rVert}
\newcommand\upto{\nearrow}
\newcommand\punkt{\xperiod}    
\newcommand\iid{i.i.d\punkt}    
\newcommand\ie{i.e\punkt}
\newcommand\eg{e.g\punkt}
\newcommand\viz{viz\punkt}
\newcommand\cf{cf\punkt}
\newcommand{\as}{a.s\punkt}
\newcommand{\aex}{a.e\punkt}
\newcommand\ii{\mathrm{i}}
\newcommand{\tend}{\longrightarrow}
\newcommand\dto{\overset{\mathrm{d}}{\tend}}
\newcommand\pto{\overset{\mathrm{p}}{\tend}}
\newcommand\asto{\overset{\mathrm{a.s.}}{\tend}}
\newcommand\lxto[1]{\overset{L^{#1}}{\tend}}
\newcommand\lto{\lxto1}
\newcommand\llto{\lxto2}
\newcommand\lpto{\lxto{p}}
\newcommand\eqd{\overset{\mathrm{d}}{=}}
\newcommand\bbR{\mathbb R}
\newcounter{CC}
\newcounter{cc}
\newcommand\E{\operatorname{\mathbb E{}}}
\renewcommand\P{\operatorname{\mathbb P{}}}
\newcommand\Var{\operatorname{Var}}
\newcommand\Cov{\operatorname{Cov}}
\newcommand\Be{\operatorname{Be}}
\newcommand\sgn{\operatorname{sgn}}
\newcommand\ga{\alpha}
\newcommand\gb{\alpha}
\newcommand\gd{\delta}
\newcommand\gf{\varphi}
\newcommand\gam{\gamma}
\newcommand\gG{\Gamma}
\newcommand\gl{\lambda}
\newcommand\gL{\Lambda}
\newcommand\gO{\Omega}
\newcommand\gs{\sigma}
\newcommand\gth{\theta}
\newcommand\eps{\varepsilon}
\newcommand\cF{\mathcal F}
\newcommand\cH{\mathcal H}
\newcommand\cI{\mathcal I}
\newcommand\cL{{\mathcal L}}
\newcommand\cP{\mathcal P}
\newcommand\cX{{\mathcal X}}
\newcommand\cY{{\mathcal Y}}
\newcommand\tX{{\widetilde X}}
\newcommand\tY{{\widetilde Y}}
\newcommand\indic[1]{\boldsymbol1\xcpar{#1}} 
\newcommand\bigindic[1]{\boldsymbol1\bigcpar{#1}}
\newcommand\qw{^{-1}}
\newcommand\qq{^{1/2}}
\newcommand\intoi{\int_0^1}
\newcommand\intoo{\int_0^\infty}
\newcommand\oi{\ensuremath{[0,1]}}
\newcommand\ooo{[0,\infty)}
\newcommand\ooox{[0,\infty]}
\newcommand\dd{\,\mathrm{d}}
\newcommand\ddx{\mathrm{d}}
\newcommand{\chf}{characteristic function}
\newcommand{\gsf}{$\gs$-field}
\newcommand{\ui}{uniformly integrable}
\newcommand\rv{random variable}
\newcommand\lhs{left-hand side}
\newcommand\rhs{right-hand side}
\newcommand\bX{\mathbf X}
\newcommand\bY{\mathbf Y}
\newcommand\bZ{\mathbf Z}
\newcommand\be{\mathbf e}
\newcommand\bex{\mathbf e'}
\newcommand\bt{\mathbf t}
\newcommand\bu{\mathbf u}
\newcommand\bx{\mathbf x}
\newcommand\by{\mathbf y}
\newcommand\bz{\mathbf z}
\newcommand\bxi{\boldsymbol \xi}
\newcommand\bbeta{\boldsymbol \eta}
\newcommand\bzeta{\boldsymbol\zeta}
\newcommand\hX{\widehat X}
\newcommand\hY{\widehat Y}
\newcommand\hXb{{\widehat X}_\gb}
\newcommand\hYb{{\widehat Y}_\gb}
\newcommand\hXbM{{\widehat X}_{\gb;M}}
\newcommand\hYbM{{\widehat Y}_{\gb;M}}
\newcommand\tXb{{\widetilde X}_\gb}
\newcommand\tYb{{\widetilde Y}_\gb}
\newcommand\DC{\operatorname{dcov}}
\newcommand\DCx[1]{\operatorname{dcov}_{#1}}
\newcommand\hDCx[1]{\DCx{#1}\sphat}
\newcommand\tDCx[1]{\DCx{#1}\sptilde}
\newcommand\DCb{\DCx{\gb}}
\newcommand\DCii{\DCx{2}}
\newcommand\hDCii{\hDCx{2}}
\newcommand\tDCii{\tDCx{2}}
\newcommand\hDCb{\DC_{\gb}\sphat}
\newcommand\tDCb{\DC_{\gb}\sptilde}
\newcommand\xDCb{\DC_{\gb}^*}
\newcommand\EDCb{\DC_{\gb}^{\mathsf E}}
\newcommand\HDCb{\DC_{\gb}^{\mathsf H}}
\newcommand\ZDCb{\DC_{\gb}^=}
\newcommand\ox{\bx_o}
\newcommand\oy{\by_o}
\newcommand\HX{\cH}
\newcommand\HY{\cH'}
\newcommand\dimH{\dim\cH}
\newcommand\PhiX{\Phi_{\bX}}
\newcommand\PhiY{\Phi_{\bY}}
\newcommand\phiX{\varphi_{\bX}}
\newcommand\textas{\quad\text{a.s.}}
\newcommand\cb{c_{\gb}}
\newcommand\cbx[1]{c_{\gb,#1}}
\newcommand\bigmid{\bigm|}
\newcommand\rrq{\frac{r^2}{2}}
\newcommand\ssq{\frac{s^2}{2}}
\newcommand\gbx{{\gb^{*}}}
\newcommand\sumiv{\sum_{i=1}^4}
\newcommand\nn{^{(n)}}
\newcommand\normll[1]{\norm{#1}_{L^2}}
\newcommand\cXY{\cX\times\cY}
\newcommand\Xx{X^*}
\newcommand\Xxx{\widehat{X}^{*}}
\newcommand\Yxx{\widehat Y^{*}}
\newcommand\hXii{\hX_2}
\newcommand\tXii{\tX_2}
\newcommand\hYii{\hY_2}
\newcommand\tYii{\tY_2}
\newcommand\psib{\hXb}
\newcommand\psibY{\hYb}
\newcommand\ellk{k}
\newcommand\GLX{\gL_X}
\newcommand\GLY{\gL_Y}
\newcommand\tensor{\otimes}
\newcommand\hphi{\hat\phi}
\newcommand\IM{I^{M}}
\newcommand\MOD[1]{modulo~#1}
\newcommand\XN{X_{\le N}}
\newcommand\YN{Y_{\le N}}
\newcommand\gaxx{\iota}
\newcommand\cAxx{\cI}
\newcommand\Lgbgb{L^{\gb,2\gb}}
\newcommand\Xxxl{\widehat{X}^{**}}
\newcommand\Yxxl{\widehat{Y}^{**}}
\newcommand\minx[2]{#1\bmin#2}
\newcommand\HS{Hilbert--Schmidt}
\newcommand\normHS[1]{\norm{#1}_{\mathrm{HS}}}
\newcommand\Tz{T_\bz}
\newcommand\ja{+}
\newcommand\nej{$-$}
\newcommand\CS{Cauchy--Schwarz}
\newcommand\CSineq{\CS{} inequality}
\begin{document}

\begin{abstract} 
Distance covariance 
is a measure of
dependence between two random variables that take values in
two, in general different, metric spaces, see
Sz\'ekely, Rizzo and Bakirov (2007) and Lyons (2013).
It is known that the distance covariance, and its generalization
$\gb$-distance covariance,
can be defined in several different ways that are equivalent under 
some moment conditions.
The present paper considers four such definitions and find minimal moment
conditions for each of them, together with some partial results when these
conditions are not satisfied.

The paper also studies the special 
case when the variables are Hilbert space valued,
and shows under weak moment conditions that two such variables are
independent if and only if their ($\gb$-)distance covariance is 0; this extends
results by Lyons (2013) and 
Dehling et al\punkt (2018+).
The proof uses a new definition of distance covariance in the Hilbert space
case, generalizing the definition 
for Euclidean spaces
using characteristic functions
by Sz\'ekely, Rizzo and Bakirov (2007).
\end{abstract}

\maketitle

\section{Introduction}\label{S:intro}

\emph{Distance covariance} 
is a measure of
dependence between two random variables $\bX$ and $\bY$ that take values in
two, in general different, spaces $\cX$ and $\cY$.
This measure appears in \citet{Feuerverger} 
as a test statistic when $\cX=\cY=\bbR$;
it was  more generally
introduced by \citet{SRB2007} for the case 
of random variables in
Euclidean spaces, possibly of different dimensions.
This was extended to general separable measure spaces  by
\citet{Lyons}, see also \citet{Jakobsen},
and to semimetric spaces (of negative type, see below)
by \citet{Sejdinovic2013}.

Our setting throughout this paper is the following 
(see also \refR{Rsemimetric}):
 $(\bX,\bY)$
is a pair of random variables taking values in $\cX\times\cY$, 
where $\cX$ and $\cY$ are separable metric spaces, with metrics
$d_\cX$ and $d_\cY$; we write just $d$ for both metrics when there is no
risk of confusion.

We denote the distance covariance by
$\DCb(\bX,\bY)$, where $\gb>0$ is a parameter. 
The standard choice is $\gb=1$; in this case we may drop
the subscript and write $\DC(\bX,\bY)$.

One interesting feature of distance correlation is that it can be defined in
several ways that look very different but are equivalent (at least assuming
sufficient moment conditions). 
We will give several definitions (sometimes for special cases)
and begin with three related 
definitions that work in the general setting just described.

Let, throughout the paper, 
$(\bX_1,\bY_1), (\bX_2,\bY_2), \dots$ be independent copies of $(\bX,\bY)$.
Also, let
$\ox\in\cX$  and $\oy\in\cY$ be two fixed points, and write for convenience
$\norm{\bx}:=d(\bx,\ox)$ and $\norm{\by}:=d(\by,\oy)$
for $\bx\in\cX$ and $\by\in\cY$.
(In the case of Euclidean spaces, or Hilbert spaces, we choose $\ox=\oy=0$,
and
$\norm{\bx}$ is the usual norm.)
We use $\ox$ and $\oy$ for
moment conditions of the type $\E \norm{\bX}^\gb<\infty$;
note that 
by the triangle inequality,
for this condition the choice of $\ox$ does not matter, and that 
this
condition is equivalent to $\E d(\bX_1,\bX_2)^\gb<\infty$.

Also, define for convenience
\begin{align}\label{gbx}
  \gbx:=
\max(\gb,2\gb-2)=
  \begin{cases}
    \gb, & 0<\gb\le 2,\\
    2\gb-2, & \gb>2.
  \end{cases}
\end{align}
As will be seen below, the case of main interest is $\gb\in(0,2]$;  
in this case thus simply $\gbx=\gb$.

When necessary, we distinguish the versions of distance covariance by
different superscripts such as
$\xDCb,\hDCb,\tDCb$, but usually this is omitted because
the choice of definition does not matter, or is clear from the context.

\begin{definition}\label{D1}
Assume $\E \norm{\bX}^{2\gb}<\infty$ and $\E \norm{\bY}^{2\gb}<\infty$. Then
  \begin{align}
\DCb(\bX,\bY)&=
  \xDCb(\bX,\bY)
\notag\\&\hskip-0em
:=\E\bigsqpar{d(\bX_1,\bX_2)^\gb d(\bY_1,\bY_2)^\gb}
+ \E\bigsqpar{d(\bX_1,\bX_2)^\gb}\E\bigsqpar{d(\bY_1,\bY_2)^\gb}
\notag\\&\hskip2em
-2\E\bigsqpar{d(\bX_1,\bX_2)^\gb d(\bY_1,\bY_3)^\gb}.
\label{d1}  \end{align}
\end{definition}

\begin{definition}\label{D2}
Assume $\E \norm{\bX}^{\gbx}<\infty$ and $\E \norm{\bY}^{\gbx}<\infty$. Then
\begin{align}\label{d2}
\DCb(\bX,\bY)=
  \hDCb(\bX,\bY):=\tfrac14\E\bigsqpar{\hXb\hYb}, 
\end{align}
where
\begin{align}
 \hXb&:=d(\bX_1,\bX_2)^\gb-d(\bX_2,\bX_3)^\gb+d(\bX_3,\bX_4)^\gb-d(\bX_4,\bX_1)^\gb
\label{hxb}
\end{align}  
and similarly for $\hYb$.
\end{definition}

\begin{definition}\label{D3}
Assume $\E \norm{\bX}^{\gbx}<\infty$ and $\E \norm{\bY}^{\gbx}<\infty$. Then
\begin{align}\label{d3}
\DCb(\bX,\bY)=  
\tDCb(\bX,\bY):=\E\bigsqpar{\tXb\tYb}, 
\end{align}
where
\begin{align}
\tXb&:=\E\xpar{\hXb\mid \bX_1,\bX_2}
\label{txb1}\\&\phantom:
=d(\bX_1,\bX_2)^\gb-\E_{\bX} d(\bX_1,\bX)^\gb- \E_{\bX} d(\bX_2,\bX)^\gb
 + \E d(\bX_1,\bX_2)^\gb
\label{txb2}
\end{align}  
and similarly for $\tYb$, where $\E_{\bX}$ denotes 
integrating over $\bX$ only, \ie,
the conditional expectation given all $\bX_j$ (but not $\bX$).
\end{definition}

The role of the parameter $\gb$ is thus to replace the metric $d$ by
$d^\gb$ in the definition of $\DC=\DCx1$. See further \refR{Rsemimetric} below.

Note that $\DCb(\bX,\bY)$  only depends on the joint distribution of
$\bX$ and $\bY$; thus distance covariance can be seen as a
functional on distributions in $\cX\times\cY$.

 The moment condition
$\E \norm{\bX}^{2\gb}<\infty$ and $\E \norm{\bY}^{2\gb}<\infty$
in \refD{D1} is equivalent to
$\E d(\bX_1,\bX_2)^{2\gb}<\infty$ and $\E d(\bY_1,\bY_2)^{2\gb}<\infty$,
which implies that all expectations in \eqref{d1} are finite; it implies
also
$\hXb,\hYb\in L^2$ and thus $\tXb,\tYb\in L^2$, so the expectations
in \eqref{d2} and \eqref{d3} are also finite.
Moreover, in this case,
it is easy to see that  \refDs{D1}--\ref{D3} are equivalent:
by expanding the products $\hXb\hYb$ and $\tXb\tYb$ in \eqref{d2} and
\eqref{d3},
we obtain \eqref{d1} after simple calculations.
It is less obvious that the weaker moment condition in \refDs{D2} and
\ref{D3}
is enough to guarantee that the expectations in \eqref{d2} and \eqref{d3}
are finite and equal; we show this, and in particular that
$\hXb,\hYb,\tXb,\tYb\in L^2$, 
in \refS{Sexist} (\refT{T1}).
In \refS{Sopt} we show that the exponents $2\gb$ and $\gbx$ in the moment
conditions are  optimal in general;
in \refS{Sbeyond} we discuss extensions when the moment conditions fail.

The original definition of distance covariance by \citet{SRB2007}, 
for random
variables $\bX$ and $\bY$ in Euclidean spaces $\bbR^p$ and $\bbR^q$,
see also \citet{Feuerverger},
is quite different and is based on characteristic functions.
The general version with a $\gb\in(0,2)$
\cite[Section 3.1]{SRB2007} is as follows.

Let $\gf_{\bX}(\bt):=\E e^{\ii\bt\cdot\bX}$,
$\gf_{\bY}(\bu):=\E e^{\ii\bu\cdot\bY}$ and
$\gf_{\bX,\bY}(\bt,\bu):=\E e^{\ii(\bt\cdot\bX+\bu\cdot\bY)}$
be the \chf{s} of $\bX$, $\bY$ and $(\bX,\bY)$.
Define also the constants
\begin{align}\label{cbx}
  \cbx{\ellk}:=\frac{2^\gb\gG((\ellk+\gb)/2)}{-\pi^{\ellk/2}\gG(-\gb/2)}
=\frac{\gb 2^{\gb-1}\gG((\ellk+\gb)/2)}{\pi^{\ellk/2}\gG(1-\gb/2)}>0.
\end{align}
(The values of these normalization
constants are unimportant; they are chosen to make the
definition agree with the preceding ones.)

\begin{definition}\label{D4}
Let\/ $(\bX,\bY)$ be a pair of random vectors in $\bbR^p$ and $\bbR^q$,
respectively, where $p,q\ge1$,
and let $0<\gb<2$.
Then
  \begin{align}\label{d4}
&\DCb(\bX,\bY)=
  \EDCb(\bX,\bY):=
\notag\\&\hskip2em
\cbx{p}\cbx{q}\int_{\bt\in\bbR^p}\int_{\bu\in\bbR^q}
\bigabs{\gf_{\bX,\bY}(\bt,\bu)-\gf_{\bX}(\bt)\gf_{\bY}(\bu)}^2
\frac{\dd\bt\dd\bu}
{|\bt|^{p+\gb} |\bu|^{q+\gb}}.
\end{align}
\end{definition}

\begin{remark}\label{RD4mom}
No moment condition is needed in \refD{D4}, since the
integrand in \eqref{d4} is non-negative; 
with this definition (for Euclidean spaces and $\gb<2$),
$\DCb(\bX,\bY)$ is always defined,
although it may be $\infty$. 
As shown in \cite{SRB2007}, 
$\DCb(\bX,\bY)$ is finite at
least when 
$\E \norm{\bX}^{\gb}<\infty$ and $\E \norm{\bY}^{\gb}<\infty$;
this also follows from the equivalence with \refDs{D2} and \ref{D3},
see \refTs{TEH} and \ref{T52}.

In contrast, we have in \refDs{D1}--\ref{D3} imposed moment conditions
making $\DCb(\bX,\bY)$ finite. 
These definitions can be used somewhat more generally when the expectations
in them are 
finite, and even when the result is $+\infty$; 
see \refSs{Sopt} and \ref{Sbeyond}.
However, without moment conditions,
there are cases, even with $\bX=\bY=\bbR$, when 
\refDs{D1}--\ref{D3} yield results of the type $\infty-\infty$ and thus 
cannot be used at all;
see \refEs{ED1}, \ref{ED2bad}, \ref{ED3bad} and \ref{ED3bad3}.
\end{remark}

\begin{remark}\label{R2div}
  \refD{D4} requires $\gb<2$, since typically the integral in \eqref{d4}
  diverges for $\ga\ge2$. For example, if $p=q$ and $\bX=\bY\sim N(0,I)$,
then 
$|\gf_{\bX,\bY}(\bt,\bu)-\gf_\bX(\bt)\gf_\bY(\bu)|\sim |\innprod{\bt,\bu}|$
as 
$\bt,\bu\to0$, and
\eqref{d4} diverges for $\ga\ge2$.
\end{remark}

\citet{Feuerverger} gave \refD{D4} with $\gb=1$
for $\cX=\cY=\bbR$ and the special case when $(\bX,\bY)$ have the empirical
distribution of a finite sample from an unknown bivariate distribution, 
thus defining a test statistic for independence. 
He also showed that it has  the equivalent forms \eqref{d2} and \eqref{d1}.
More generally, 
for arbitrary random $(\bX,\bY)$ in Euclidean spaces and $0<\gb<2$, 
\citet{SRB2007} gave \refD{D4};
they also showed that it is equivalent to
\refD{D1}
when the moment condition in the latter holds
\cite[Remark 3 for $\gb=1$; implicit in \S3.1 for $\gb\in(0,2)$]{SRB2007};
see also 
\citet[(3.7), (4.1) and Theorem 8]{SR2009}.
Furthermore, \eqref{d3} was used for finite samples in
\citet[(2.8) and \S3.1]{SRB2007} and
\citet[(2.8) and \S4.1]{SR2009}.
The name \emph{distance covariance} 
was introduced by \cite{SRB2007}
(for the case $\gb=1$, and \emph{$\gb$-distance covariance} in general).
(Actually, \cite{SRB2007} and \cite{SR2009} define 
the distance covariance as the square root
of $\DC(\bX,\bY)$; we ignore this difference in terminology.)

In the Euclidean setting in \cite{Feuerverger} and \cite{SRB2007},
with $\gb<2$,
\refD{D4} implies immediately
the fundamental property that
$\DCb(\bX,\bY)\ge0$ for any $\bX$ and $\bY$, 
and  furthermore 
\begin{equation}\label{iff}
 \DCb(\bX,\bY)=0\iff \text{$\bX$ and $\bY$ are independent}.
\end{equation}
Hence,
$\DCb(\bX,\bY)$ can be regarded as a measure of dependency,
and distance covariance can be used to  test independence.
(As noted in \cite{SRB2007}, \eqref{iff} does not hold for $\gb=2$; see
\refS{S2}.)

\citet{Lyons} extended the theory to general (separable) metric spaces,
 with $\gb=1$, using \refD{D3} as his definition. 
(This was also suggested in \cite[\S 3]{SR2009-rejoinder}.)
\citet{Lyons} showed also that, although the definition works for arbitrary
metrics,
$\DC$ is useful as a measure of dependence
mainly in the case when $\cX$ and $\cY$ are metric spaces 
of \emph{negative type} 
(see \cite{Lyons} for a definition; 
see also \cite{Sejdinovic2013}, \cite{Berg} and \refR{Rsemimetric} below), 
because in this case, but not otherwise, $\DC(\bX,\bY)\ge0$ for any $\bX$
and $\bY$ such that $\DC(\bX,\bY)$ is defined; if furthermore the spaces are
of \emph{strong negative type} (see again \cite{Lyons}), then also
\eqref{iff} holds for $\gb=1$.
(The implication that 
$\DCb(\bX,\bY)=0$ for independent 
variables is trivial, for any $\gb$,
but not the converse.)
Hence, for metric spaces of strong negative type,
$\DC$ can be regarded as a measure of dependence and for tests of
independence just as in the Euclidean case.

We have here, as \cite{Lyons}, 
assumed that $d_\cX$ and $d_\cY$ are metrics.
However, we can formally use 
\refDs{D1}--\ref{D3}  for any symmetric
measurable
functions $d_\cX:\cX\times\cX\to\ooo$ and $d_\cY:\cY\times\cY\to\ooo$.
(For $\cX$ and $\cY$ such that the expectations exist,
and
still assuming $\cX$ and $\cY$ to be separable metric spaces, to avoid
technical problems.)
It seems natural to assume at least that
$d_\cX$ and $d_\cY$ are
\emph{semimetrics}; a semimetric on a space $\cX$ is a symmetric 
function $d:\cX\times\cX\to\ooo$ such that $d(\bx_1,\bx_2)=0\iff \bx_1=\bx_2$.
(Thus, the triangle inequality is not assumed. Note that the term semimetric
also is used in other context with  a different meaning.)
This extension was made by \citet{Sejdinovic2013}; they considered
semimetrics of negative type and showed that much of the theory extends to
this case.

\begin{remark}\label{Rsemimetric}
If $0<\gb\le1$, then $d^\gb$ is also a metric for any metric $d$, and
$\DCb$ is just $\DC$ 
applied to the spaces $\cX$ and $\cY$ equipped with the
metrics $d_\cX^\gb$ and $d_\cY^\gb$.
(From an abstract point of view, the case $\gb\le1$ thus does not add
anything new.)

If we allow general semimetrics, there is no such restriction; $d^\gb$ is a
semimetric for every $\gb>0$, and $\DCb$ is just $\DC$ applied to
the semimetrics $d_\cX^\gb$ and $d_\cY^\gb$ for any $\gb>0$.

On the other hand, 
see \cite{Lyons} and \cite{Schoenberg1938},
a semimetric $d$ on a space $\cX$ is of negative type if and only there exists
an embedding $\gf:\cX\to\cH$ into a Hilbert space such that
\begin{align}\label{-}
  d(\bx_1,\bx_2)=\norm{\gf(\bx_1)-\gf(\bx_2)}^2.
\end{align}
In particular, \eqref{-} implies that $d\qq$ is a metric.
(We assume that balls for
the semimetric define the topology, and thus
the metric $d\qq$
defines the topology 
of $\cX$.)
Hence, for semimetrics of negative type, $\DCb$ is the same as
$\DCx{2\gb}$ for the metrics $d_\cX\qq$ and $d_\cY\qq$;
in particular, $\DC$ equals $\DCx2$ for these metrics.
Consequently, our setting with metrics but arbitrary $\gb$ 
includes also semimetrics of negative type.
Furthermore, using the embedding $\gf$, we see that $\DC$ for
semimetric spaces of negative type can be reduced to $\DCx2$ for
Hilbert spaces, see \refR{Rembedded}.
(This is implicit in \cite{Sejdinovic2013}, where this 
embedding is used to give another interpretation of distance covariance,
see \refRs{RHS} and \ref{RHS2}.)

We will in the sequel assume that $d_\cX$ and $d_\cY$ are metrics
(without assuming negative type), 
but note that as just said, by changing $\gb$, this really includes the case
of semimetrics of negative type.

In this context we note that
if $\cX$ is a Euclidean space $\bbR^q$, 
or more generally a Hilbert space, then the semimetric
$\norm{\bx_1-\bx_2}^\gb$ is of negative type if and only if $0<\gb\le2$, see
\cite{Schoenberg1938}.
(It is thus a metric of negative type if and only if $0<\gb\le1$.)
Consequently, for Hilbert spaces, if $0<\gb\le2$, 
we can conversely regard $\DCb$ as 
$\DC$ for the semimetric of negative type $\norm{\bx_1-\bx_2}^\gb$.
\end{remark}

In the first part of the present paper, we 
consider general metric spaces and general $\gb>0$, 
and study and compare \refDs{D1}--\ref{D3}.
In particular, we 
show that  the definitions agree under the moment conditions above
(\refS{Sexist}). 
We also show that $\DCb$ depends continuously on the distribution of
$(\bX,\bY)$, assuming convergence of the $\gbx$ moments 
$\E\norm{\bX}^\gbx$ and $\E\norm{\bY}^\gbx$
(\refT{T4} and \refR{RT4}).

\citet{SRB2007} showed that, in the Euclidean case and with
$\gb\in(0,2)$, computing $\DCb$ for the empirical distribution of a sample
gives a strongly consistent estimator of $\DCb$, provided $\gb$ moments are
finite. 
This was extended to general metric spaces, with $\gb=1$, by \citet{Lyons},  
who claimed consistency in this sense assuming only finite first moments;
however, the proof is incorrect as noted in the Errata.
As also noted in \cite{Lyons}, there is a simple
proof assuming second moments, and \citet{Jakobsen} proved the result when
$\E(\norm{\bX}\norm{\bY})^{5/6}<\infty$, and thus in particular when $\bX$
  and $\bY$ have moments of order $5/3$.
We remove this condition and show (\refT{Tcons}) consistency assuming only
    first moments (as stated in \cite{Lyons}); furthermore, this is extended
    to all $\gb>0$, now assuming $\gbx$ moments.

In the second part of the paper, we consider Hilbert spaces.
\citet{Mikosch} studied $\DCb$ 
for $\gb\in(0,2)$
in
the infinite-dimensional Hilbert space $L^2\oi$, using \refD{D1}.
(Since all separable infinite-dimensional Hilbert spaces are isomorphic;
this is equivalent to considering 
arbitrary separable Hilbert spaces.) 
\citet{Lyons} showed that a Hilbert space is of strong negative type, and
thus \eqref{iff} holds for $\gb=1$.
\citet[Theorem 4.2]{Mikosch} extended this to all $\gb\in(0,2)$.

We consider the Hilbert space case in \refSs{SH1}--\ref{S2}.
We give yet another definition of $\DCb$ in this case
(\refD{D5}), which is related to
\refD{D4} in Euclidean spaces, but where we 
replace the \chf{s} by certain \emph{characteristic random variables}, which
are Gaussian random variables that can be defined also for variables in
infinite-dimensional Hilbert spaces.
We show that this definition is equivalent to the ones above
under suitable moment conditions.
We then use
this definition to give a new proof, assuming only $\gb$ moments, 
of the theorem by
\citet{Mikosch} just mentioned that
\eqref{iff} holds
 for Hilbert spaces and any $\gb\in(0,2)$
(our \refT{TH}).
Our proof (and \refD{D5}) 
is based on the ideas in \cite{Mikosch}; however,
the proof 
in \cite{Mikosch} is formulated
for the Hilbert space $L^2\oi$ and uses arguments with
Brownian motion.
Our proof can be regarded as a more abstract version of their
proof,
stated for arbitrary (separable) Hilbert spaces
and using \iid{} Gaussian sequences instead of Brownian motion;
we believe that this makes the proof clearer since it avoids irrelevant
details related to the particular choice $L^2\oi$ of the Hilbert space.

\refS{S2} studies the case $\gb=2$
for Hilbert spaces. This case  is rather trivial, and markedly
different from $\gb<2$. In particular, 
even in one dimension,
\eqref{iff} does \emph{not} hold for
$\gb=2$, as is well known 
since
\cite[\S 3.1]{SRB2007}  and   \cite[\S 4.1]{SR2009}. 
However, this case is of special interest because,
as said in \refR{Rsemimetric}, and in more detail in \refR{Rembedded},
arbitrary (semi)metric spaces of negative type can be embedded
into it.

In the third part of the paper,
we return to general metric spaces and study whether the
moment conditions above are optimal. 
\refS{Sopt}  shows that the exponents in the conditions cannot be
decreased, in general.
However, some other weakenings are possible, and
in \refS{Sbeyond} 
we further study and compare the various definitions when the moment 
conditions above fail. We give some results;
in particular, we consider Lorentz spaces.
We also state
some open problems
that we have failed to solve.

The appendices contain some general results on uniform integrability and on
integrals in a Hilbert space used in the paper; for completeness full proofs
are given although some or all results are known.

\begin{remark}\label{RD6}
  Another version of the definitions above is obtained if we denote the
\rhs{} of \eqref{hxb} by $\psib(\bX_1,\bX_2,\bX_3,\bX_4)$ and then define
\begin{align}\label{d6}
    \DCb(\bX,\bY)&=
    \ZDCb(\bX,\bY)
\notag\\&:=
\E\bigpar{\psib(\bX_1,\bX_2,\bX_3,\bX_4)\psibY(\bY_1,\bY_2,\bY_5,\bY_6)}.
\end{align}
This version is used in proofs in \cite{Lyons} and \cite{Jakobsen}.

It is obvious that if $\hXb,\hYb\in L^2$, then the expectation in \eqref{d6} 
is finite, and, using Fubini's theorem to integrate first over
$\bX_3,\bX_4,\bY_5,\bY_6$, it equals $\E\bigpar{\tXb\tYb}$; thus, at least
in this case, \eqref{d6} agrees with \eqref{d3}.
In particular, by \refL{L2} below, this holds 
when  $\E \norm{\bX}^{\gbx}<\infty$ and $\E \norm{\bY}^{\gbx}<\infty$. 
We will not consider this definition further, and we leave the case when the
moment condition just stated fails to the reader.
(We conjecture results similar to those in \refSs{Sopt} and \ref{Sbeyond}.)
\end{remark}

\begin{remark}
  We have defined $\tXb$ as a conditional expectation of $\hXb$; this can be
  regarded as an orthogonal projection in the Hilbert space $L^2(\P)$.

If $\E\norm{\bX}^{2\gb}<\infty$, so $d(\bX_1,\bX_2)^\gb\in L^2$, then, as
noted by \citet{Jakobsen},
$\tXb$ can also be regarded as a projection in another way, \viz{} as the
orthogonal projection of $d(\bX_1,\bX_2)^\gb$ onto the subspace of $L^2(\P)$
consisting of functions $g(\bX_1,\bX_2)$ with 
$\E\bigpar{g(\bX_1,\bX_2)\mid\bX_1}=\E\bigpar{g(\bX_1,\bX_2)\mid\bX_2}=0$ a.s.
\end{remark}

\begin{remark}\label{RHS}
For semimetrics of negative type, 
another interpretation of distance covariance is given by
\citet[Theorem 24]{Sejdinovic2013}, showing that it coincides with 
the \emph{Hilbert-Schmidt independence criterion}, a distance measure
between the distributions $\cL(\bX,\bY)$ and
$\cL(\bX_1,\bY_2)=\cL(\bX)\times\cL(\bY)$ that is defined using
reproducing Hilbert spaces
given by some kernels on the spaces, provided one chooses the kernels to be
defined in a specific way by the metrics $d_\cX$ and $d_{\cY}$.
See also \refR{RHS2}.
\end{remark}

\begin{remark}
  Yet another interpretation (or definition) of distance covariance was given 
by \citet{SR2009} for Euclidean spaces; it was called 
\emph{Brownian covariance distance}. In the one-dimensional case
$\cX=\cY=\bbR$, and with $\gb=1$, let $W$ and $W'$ be two two-sided Brownian
motions, independent of each other and of $\bX$ and $\bY$; then
\begin{align}\label{brown}
  \DC(\bX,\bY)=\E \bigsqpar{\Cov\bigpar{W(\bX),W'(\bY)\bigmid W,W'}^2}
\end{align}
This was extended, also in \cite{SR2009}, to arbitrary dimension by 
using Brownian fields on $\bbR^k$,
and to $\gb\in(0,2)$ by using fractional Brownian fields.

This approach was further generalized to arbitrary spaces with
semimetrics of negative type
by \citet[Section 6.4]{Kanagawa2018+},
letting $W$ and $W'$ be Gaussian stochastic processes on $\cX$ and $\cY$,
with suitable covariance kernels.
\end{remark}

\begin{remark}\label{R>0}
  \refDs{D2}--\ref{D4} show immediately that $\DCb(\bX,\bX)\ge0$ whenever
the definition applies (even in the extended sense
discussed in \refR{RD4mom}).
Moreover, 
$\DCb(\bX,\bX)>0$ unless $\bX$ is degenerate (\ie, is concentrated at a
single value);
this is immediate for \refD{D4}; it was shown by \citet{Lyons} for
\refD{D3} (for $\gb=1$), and his proof extends to general $\gb$, 
and to \refD{D2},
for the latter even without any moment assumption (allowing $+\infty$).
\end{remark}

\begin{remark}\label{Rcorr}
  \emph{Distance correlation} is defined by \cite{SRB2007} as 
\begin{align}
  \frac{\DCb(\bX,\bY)}{\DCb(\bX,\bX)\qq\DCb(\bY,\bY)\qq},
\end{align}
provided $\bX$ and $\bY$ are non-degenerate so
the denominator is strictly positive (see \refR{R>0}).

Various properties of distance correlation follow from properties of
distance covariance; we leave this to the reader.
\end{remark}


\section{Some notation}\label{Snot}

As said in the introduction,
 $(\bX,\bY)$
is a pair of random variables taking values in 
separable metric spaces  $\cX$ and $\cY$,
and 
$(\bX_i,\bY_i)$, $i\ge1$, are  independent copies of $(\bX,\bY)$.
$\gb$ is a fixed parameter, and $\gbx$ is given by \eqref{gbx}. 
Unless stated otherwise, we assume only $\gb>0$.
(This condition is sometimes repeated for emphasis.)

$\cP(\cX)$ denotes the set of all Borel probability measures in $\cX$.

Convergence almost surely, in probability, in distribution and in $L^p$
are denoted by
$\asto$, $\pto$, $\dto$, $\lpto$.

We use the standard definition of covariance
\begin{align}
  \label{Cov}
\Cov(Z,W):=\E\sqpar{ ZW}-\E Z\E W
\end{align}
not only for real random variables, but also more generally
for any complex random variables
$Z$ and $W$ with $\E|Z|^2,\E|W|^2<\infty$;
we further extend this notation to
conditional covariance.

For real $x,y$, 
$x\land y:=\min\set{x,y}$ and
$x\lor y:=\max\set{x,y}$; also
$x_+:=x\lor 0$ and $x_-:=(-x)_+=-(x\land0)$, so $x=x_+-x_-$.

The inner product in a Hilbert space is denoted by $\innprod{x,y}$;
for finite-dimensional $\bbR^q$ we also use
$x\cdot y$.
All Hilbert spaces have real scalars, so the inner product is real-valued.

$C$ and $c$
will denote some unimportant positive constants that depend only on $\gb$
(and may be taken as universal constants for $\gb\le2$).
Their value may differ from one occurence to the next.

\section{Existence and continuity}\label{Sexist}

We begin by recording the simple fact that with enough moments, 
\refDs{D1}--\ref{D3} agree.

\begin{lemma}\label{L0}
Let $\gb>0$.
If\/  $\E \norm{\bX}^{2\gb}<\infty$ and $\E \norm{\bY}^{2\gb}<\infty$,
then all expectations in \eqref{d1}, \eqref{d2} and \eqref{d3} are finite,
and the three definitions of $\DCb(\bX,\bY)$ agree,
\ie,
$\xDCb(\bX,\bY) = \hDCb(\bX,\bY)= \tDCb(\bX,\bY)$.
\end{lemma}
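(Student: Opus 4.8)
The plan is to handle the two assertions—finiteness of all the expectations and equality of the three definitions—separately, and for the second to reduce both \eqref{d2} and \eqref{d3} to the explicit expression \eqref{d1}.

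First I would record finiteness. By the triangle inequality the hypothesis $\E\norm{\bX}^{2\gb}<\infty$ is equivalent to $\E d(\bX_1,\bX_2)^{2\gb}<\infty$, which says precisely that $d(\bX_1,\bX_2)^\gb\in L^2(\P)$; the same holds for every relabelling $d(\bX_i,\bX_j)^\gb$ and, likewise, for the $\bY$ distances. The \CSineq{} then bounds each of the three product expectations in \eqref{d1} by a product of $L^2$ norms, so \eqref{d1} is a finite sum of finite terms. Since $\hXb$ is a finite linear combination of such $L^2$ variables, $\hXb\in L^2$, and similarly $\hYb\in L^2$, whence $\E\bigsqpar{\hXb\hYb}$ is finite by \CS{}. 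Finally $\tXb=\E\xpar{\hXb\mid\bX_1,\bX_2}$ is a conditional expectation, hence an $L^2$-contraction applied to $\hXb\in L^2$, so $\tXb\in L^2$; likewise $\tYb\in L^2$, and $\E\bigsqpar{\tXb\tYb}$ is finite.

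For the equality I would reduce both remaining definitions to the quantity in \eqref{d1}, which I abbreviate by its three building blocks $T_1=\E\bigsqpar{d(\bX_1,\bX_2)^\gb d(\bY_1,\bY_2)^\gb}$, $T_2=\E\bigsqpar{d(\bX_1,\bX_2)^\gb d(\bY_1,\bY_3)^\gb}$ and $T_3=\E\bigsqpar{d(\bX_1,\bX_2)^\gb}\E\bigsqpar{d(\bY_1,\bY_2)^\gb}$, so that \eqref{d1} reads $\xDCb=T_1-2T_2+T_3$. To get $\hDCb=\xDCb$, expand the product $\hXb\hYb$ into its $16$ signed summands $\pm\,d(\bX_i,\bX_j)^\gb d(\bY_k,\bY_l)^\gb$. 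Because the $(\bX_m,\bY_m)$ are \iid{} copies, the expectation of each summand is invariant under relabelling of the copies and therefore depends only on the overlap of the index pairs $\set{i,j}$ and $\set{k,l}$: coincident pairs give $T_1$, pairs sharing exactly one index give $T_2$, and disjoint pairs give $T_3$. A short sign count over the four pairs $\set{1,2},\set{2,3},\set{3,4},\set{4,1}$ of each factor yields four summands $+T_1$, eight $-T_2$ and four $+T_3$, so $\E\bigsqpar{\hXb\hYb}=4\bigpar{T_1-2T_2+T_3}$ and hence $\hDCb=\tfrac14\E\bigsqpar{\hXb\hYb}=\xDCb$. For $\tDCb=\xDCb$ I would expand $\tXb\tYb$ using the centered form \eqref{txb2}, writing $\tXb=d(\bX_1,\bX_2)^\gb-a_X(\bX_1)-a_X(\bX_2)+\mu_X$ with $a_X(\bx)=\E_{\bX} d(\bx,\bX)^\gb$ and $\mu_X=\E d(\bX_1,\bX_2)^\gb$, and similarly for $\tYb$. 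Taking the expectation of the resulting $16$ products, each cross-moment again collapses to one of $T_1,T_2,T_3$: the governing rule is that $\bX_i$ and $\bY_j$ are independent when $i\neq j$ while $(\bX_i,\bY_i)$ is a single (possibly dependent) copy, so a factor $a_X(\bX_i)$ contributes only a fresh independent $\bX$-index and the pattern is decided solely by whether the surviving $\bX$- and $\bY$-indices share a common copy. Summing the coefficients gives exactly $T_1-2T_2+T_3$, \ie{} \eqref{d1}, so $\tDCb=\xDCb$.

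The calculations are elementary, and the only real work is the bookkeeping in the two expansions; I expect the \refD{D3} computation to be the more delicate one. The conceptual point to keep in mind there is that no conditional-independence shortcut is available—since each pair $(\bX_m,\bY_m)$ may be dependent, one cannot factor $\E\bigsqpar{\tXb\tYb}$ by conditioning on the shared copies—so the identification must be carried out term by term at the level of the scalar cross-moments $\E\bigsqpar{a_X(\bX_i)a_Y(\bY_j)}$, taking care to separate the shared-index case (equal to $T_2$) from the distinct-index case (equal to $T_3$ by independence).
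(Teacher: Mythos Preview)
Your proposal is correct and follows precisely the approach the paper itself indicates: the paper's proof simply says ``this is elementary; we omit the details,'' and the introduction explains that the equivalence is obtained ``by expanding the products $\hXb\hYb$ and $\tXb\tYb$ in \eqref{d2} and \eqref{d3}'' to recover \eqref{d1}. Your write-up supplies exactly these omitted details, including the $L^2$ finiteness argument via the \CSineq{} and the term-by-term bookkeeping of the two expansions, and both sign counts are correct.
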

\begin{proof}
As said in the introduction, this is elementary; we omit the details.
\end{proof}

We will extend this to the weaker moment conditions used in \refDs{D2} and
\ref{D3}.
We argue similarly to \citet{Lyons}, who showed the case $\gb=1$
(and thus implicitly $0<\gb\le1$, see \refR{Rsemimetric}).
We first show some useful estimates of the variable $\hXb$ defined in 
\eqref{hxb}. Note the symmetry
up to sign under cyclic permutations of the indices $1,\dots,4$.

Although we state the next lemma for the random variables $\bX_i$,
it is really a pointwise inequality that could have been stated for four
non-random points $\bx_1,\dots,\bx_4$.
In sums such as \eqref{l1b} and \eqref{l1c}, the indices are interpreted
\MOD 4; moreover, 
a term containing an index $i\pm1$ should be interpreted as two terms, with
$i+1$ and $i-1$; the sum in \eqref{l1c} is thus really a sum of 8 terms.

\begin{lemma}\label{L1} 
Let $\cX$ be a metric space.
  \begin{thmenumerate}
  \item \label{L1a}
If $0<\gb\le1$, then
\begin{align}\label{l1a}
  |\hXb|&
\le 2\sumiv\bigpar{\norm{\bX_i}^\gb\land\norm{\bX_{i+1}}^\gb}.
\end{align}

  \item \label{L1b}
If\/ $0< \gb\le2$, then
\begin{align}\label{l1b}
  |\hXb|
\le C\sumiv \norm{\bX_i}^{\gb/2}\norm{\bX_{i+1}}^{\gb/2}.
\end{align}
  \item \label{L1c}
If\/ $\gb\ge1$, then
\begin{align}\label{l1c}
  |\hXb|
\le C \sumiv \norm{\bX_i}^{\gb-1}\norm{\bX_{i\pm1}}.
\end{align}
  \end{thmenumerate}
\end{lemma}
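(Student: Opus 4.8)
The three estimates share one mechanism, so the plan is to prove them together. Write $n_i:=\norm{\bX_i}$ and $d_{ij}:=d(\bX_i,\bX_j)$. The key opening move is to subtract off ``vertex potentials'': since around the $4$-cycle each index occurs once with a $+$ and once with a $-$ sign, the quantities $n_i^\gb+n_{i+1}^\gb$ telescope to $0$, so
\[
\hXb=\sum_{i=1}^{4}(-1)^{i+1}\bigl(d_{i,i+1}^\gb-n_i^\gb-n_{i+1}^\gb\bigr),
\]
with indices mod $4$. Hence
\[
|\hXb|\le\sum_{i=1}^{4}\bigl|d_{i,i+1}^\gb-n_i^\gb-n_{i+1}^\gb\bigr|,
\]
and the whole lemma reduces to a single pointwise scalar estimate of $|d^\gb-a^\gb-b^\gb|$ for reals $a,b,d\ge0$ which, by the triangle inequality applied through the base point $\ox$, satisfy $|a-b|\le d\le a+b$.

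I would then establish that scalar estimate separately in each regime; assume WLOG $a\ge b$, so $|d-a|\le b$ and $b=\min(a,b)$. For \eqref{l1a} ($0<\gb\le1$): since $t\mapsto t^\gb$ is increasing and subadditive, $a^\gb-b^\gb\le(a-b)^\gb\le d^\gb\le(a+b)^\gb\le a^\gb+b^\gb$, whence $0\le a^\gb+b^\gb-d^\gb\le2b^\gb$, giving exactly the constant $2$. For \eqref{l1c} ($\gb\ge1$): by the mean value theorem $|d^\gb-a^\gb|\le\gb(a+b)^{\gb-1}b\le C(a^{\gb-1}+b^{\gb-1})b$, and $b^\gb\le ab^{\gb-1}$, so $|d^\gb-a^\gb-b^\gb|\le C(a^{\gb-1}b+ab^{\gb-1})$. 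For \eqref{l1b} ($0<\gb\le2$): bound $|d^\gb-a^\gb-b^\gb|\le|d^\gb-a^\gb|+b^\gb$, use $|d^\gb-a^\gb|\le b^\gb$ (subadditivity, if $\gb\le1$) or $|d^\gb-a^\gb|\le\gb2^{\gb-1}a^{\gb-1}b$ (if $1\le\gb\le2$); then $b^\gb\le(ab)^{\gb/2}$ and $a^{\gb-1}b=(ab)^{\gb/2}(b/a)^{1-\gb/2}\le(ab)^{\gb/2}$ since $0\le 1-\gb/2$, giving $|d^\gb-a^\gb-b^\gb|\le C(ab)^{\gb/2}$. Inserting these with $(a,b)=(n_i,n_{i+1})$ into the displayed sum yields \eqref{l1a}, \eqref{l1b}, \eqref{l1c}; in the last case $\sum_i(n_i^{\gb-1}n_{i+1}+n_in_{i+1}^{\gb-1})=\sum_i n_i^{\gb-1}(n_{i+1}+n_{i-1})$, which is the $8$-term sum of \eqref{l1c}.

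The one genuinely load-bearing step is the vertex-potential subtraction, and this is the point I would stress. If instead one bounds the edges $d_{i,i+1}^\gb$ directly by triangle-type inequalities, one inevitably generates ``pure'' powers $n_i^\gb$ and ``diagonal'' cross terms $n_i^{\gb-1}n_{i+2}$; for $\gb>1$ these are \emph{not} dominated by the admissible right-hand sides of \eqref{l1b}--\eqref{l1c} (taking $n_1=n_3=0$, say, makes the right-hand side of \eqref{l1c} vanish while $n_2^\gb$ need not), so such a route cannot close. Subtracting $n_i^\gb+n_{i+1}^\gb$ from each edge is exactly what cancels these offending terms, after which only the benign adjacent cross terms remain and the scalar estimates above are routine. (Note that the lower bound $d\ge|a-b|$ is used in all three regimes, not merely $d\le a+b$.)
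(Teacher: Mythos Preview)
Your proof is correct and takes a genuinely different route from the paper's. The paper never introduces the vertex-potential subtraction; instead it pairs opposite edges of the $4$-cycle. For $\gb\le1$ it reduces to $\gb=1$ (so $d^\gb$ is itself a metric), applies the triangle inequality twice to get $|\hX_1|\le 2\min(d_{13},d_{24})\le 2\min(\norm{\bX_1}+\norm{\bX_3},\,\norm{\bX_2}+\norm{\bX_4})$, and then uses the combinatorial fact $(x_1+x_3)\land(x_2+x_4)\le\sum_i(x_i\land x_{i+1})$ to reach \eqref{l1a}; the case $\gb\le1$ of \eqref{l1b} then follows from $x\land y\le(xy)^{1/2}$. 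For $\gb>1$ the paper assumes WLOG $\norm{\bX_1}$ is maximal, so all $d_{ij}\le 2\norm{\bX_1}$, and applies the mean value theorem to the two pairs $(d_{12}^\gb,d_{41}^\gb)$ and $(d_{23}^\gb,d_{34}^\gb)$, obtaining $|\hXb|\le C\norm{\bX_1}^{\gb-1}(\norm{\bX_2}+\norm{\bX_4})$; this gives \eqref{l1c} directly, and \eqref{l1b} for $1\le\gb\le2$ after noting $\norm{\bX_1}^{\gb-1}\norm{\bX_j}\le\norm{\bX_1}^{\gb/2}\norm{\bX_j}^{\gb/2}$.

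Your approach is cleaner and more uniform: one telescoping identity reduces all three parts to the same scalar inequality $|d^\gb-a^\gb-b^\gb|$ under $|a-b|\le d\le a+b$, handled in one paragraph per regime, with no need for the case split on which vertex is largest or for the auxiliary inequality \eqref{gemz}. The paper's route is a bit more geometric (it exploits the diagonals $d_{13},d_{24}$ of the $4$-cycle explicitly) and gives a slightly sharper form in the $\gb>1$ case, namely a two-term bound $C\norm{\bX_1}^{\gb-1}(\norm{\bX_2}+\norm{\bX_4})$ rather than your four-term symmetric sum; but for the applications (\refL{L2}, \refL{L3}) this makes no difference.
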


\begin{proof}
Write $d_{ij}:=d(\bX_i,\bX_j)$.
Thus $\hXb=d_{12}^\gb-d_{23}^\gb+d_{34}^\gb-d_{41}^\gb$. 
Note the triangle
inequality 
\begin{align}\label{dij}
d_{ij}\le\norm{\bX_i}+\norm{\bX_j}.  
\end{align}

\pfcase{1}{$\gb\le1$}
Since $d^\gb$ is a metric when $\gb\le1$, it suffices to consider the case
$\gb=1$. 
The triangle inequality yields
\begin{align}\label{lja}
\bigabs{\hX}\le
  \bigabs{d_{12}-d_{41}} +  \bigabs{d_{23}-d_{34}} \le d_{24}+d_{24}=2d_{24}.
\end{align}
Similarly, by shifting the indices,
\begin{align}\label{ljb}
\bigabs{\hX}
\le 2d_{13}.
\end{align}
Hence, using \eqref{lja}--\eqref{ljb} and  \eqref{dij},
\begin{align}\label{gemmb}
  |\hX|&\le 2 \min\bigpar{d_{13},d_{24}}
\le 2\min\bigpar{\norm{\bX_1}+\norm{\bX_3},\norm{\bX_2}+\norm{\bX_4}}.
\end{align}
We claim that for any real $x_1,\dots,x_4\ge0$,
\begin{align}\label{gemz}
(x_1+x_3)\land(x_2+x_4)
\le \sumiv \bigpar{x_i\bmin x_{i+1}}.
\end{align}
In fact, by cyclic symmetry, we may without loss of generality 
assume that $x_1$ is the largest of $x_1,\dots,x_4$, and in this case
\begin{align}\label{gemw}
x_2+x_4 =x_1\land x_2 + x_4\land x_1
\le \sumiv \bigpar{x_i\bmin x_{i+1}},
\end{align}
and \eqref{gemz} follows. Hence \eqref{gemz} holds, and \eqref{gemmb}
implies 
\eqref{l1a} for $\gb=1$.
As said above, this shows \eqref{l1a} in general.

Furthermore,
for $\gb\le1$,
\eqref{l1b} follows from \eqref{l1a}
since $x\bmin y\le x\qq y\qq$ when $x,y\ge0$.

\pfcase{2}{$\gb>1$}
By the cyclic symmetry we may assume that $\norm{\bX_1}$ is the largest
of $\norm{\bX_1},\dots,\norm{\bX_4}$. Then, \eqref{dij} implies
\begin{align}\label{dij2}
  d_{ij}\le 2\norm{\bX_1}, \qquad i,j=1,\dots,4.
\end{align}
As above, the triangle inequality yields
\begin{align}\label{ljac}
  \bigabs{d_{12}-d_{41}} \le d_{24}
\end{align}
and thus, by the mean value theorem, for some $\gth\in\oi$,
\begin{align}
  \bigabs{d_{12}^\gb-d_{41}^\gb}
\le d_{24} \gb \bigpar{\gth d_{12}+(1-\gth)d_{41}}^{\gb-1}  .
\end{align}
Using \eqref{dij2}, this yields
\begin{align}\label{de124}
  \bigabs{d_{12}^\gb-d_{41}^\gb}
\le d_{24} \gb 2^{\gb-1} \norm{\bX_1}^{\gb-1}  .
\end{align}
Similarly,
\begin{align}\label{de324}
  \bigabs{d_{23}^\gb-d_{34}^\gb}
\le d_{24} \gb \bigpar{\gth' d_{23}+(1-\gth')d_{34}}^{\gb-1}  
\le d_{24} \gb 2^{\gb-1} \norm{\bX_1}^{\gb-1}  .
\end{align}
Summing \eqref{de124} and \eqref{de324} yields,
using again \eqref{dij},
\begin{align}\label{dea}
\bigabs{\hXb} 
&\le  \bigabs{d_{12}^\gb-d_{41}^\gb}
+  \bigabs{d_{23}^\gb-d_{34}^\gb}
\le \gb 2^{\gb} \norm{\bX_1}^{\gb-1} d_{24}
\notag\\&
\le \gb 2^{\gb} \norm{\bX_1}^{\gb-1} (\norm{\bX_2}+\norm{\bX_4}).
\end{align}
This proves \eqref{l1c} for any $\gb\ge1$.

If $1\le\gb\le2$, we further note that our assumption
$\norm{\bX_j}\le\norm{\bX_1}$ implies
\begin{align}
\norm{\bX_1}^{\gb-1} \norm{\bX_j}
\le
\norm{\bX_1}^{\gb/2} \norm{\bX_j}^{\gb/2},
\qquad j=1,\dots,4,
\end{align}
and thus \eqref{dea} also yields \eqref{l1b}.
\end{proof}

\begin{lemma}\label{L2}
  If\/ $\E \norm{\bX}^\gbx<\infty$, then
$\E \hXb^2<\infty$ and\/ $\E \tXb^2<\infty$.
\end{lemma}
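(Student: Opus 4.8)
The plan is to reduce everything to a bound on $\E\hXb^2$. Indeed, $\tXb=\E(\hXb\mid\bX_1,\bX_2)$ is a conditional expectation, hence a contraction in $L^2$: by the conditional Jensen inequality,
\[
\E\tXb^2=\E\bigsqpar{\E(\hXb\mid\bX_1,\bX_2)^2}\le\E\bigsqpar{\E(\hXb^2\mid\bX_1,\bX_2)}=\E\hXb^2,
\]
so $\tXb\in L^2$ as soon as $\hXb\in L^2$. It therefore suffices to prove $\E\hXb^2<\infty$ under the hypothesis $\E\norm{\bX}^{\gbx}<\infty$, and for this I would feed the appropriate pointwise estimate from \refL{L1} into the square, splitting according to whether $\gb\le2$ or $\gb>2$.

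If $0<\gb\le2$, then $\gbx=\gb$, and \eqref{l1b} gives $|\hXb|\le C\sum_{i=1}^4\norm{\bX_i}^{\gb/2}\norm{\bX_{i+1}}^{\gb/2}$. Squaring and expanding produces a finite sum of terms of the form $\prod_{k=1}^4\norm{\bX_k}^{(\gb/2)m_k}$, where $m_k$ counts the factors carrying index $k$. Since adjacent indices are distinct, each $k$ occurs in at most one of $\set{i,i+1}$ and at most one of $\set{j,j+1}$, so $m_k\le2$ and the exponent $(\gb/2)m_k\le\gb$. Taking expectations and using independence of the copies $\bX_k$, each such term factors into a product of moments $\E\norm{\bX}^{(\gb/2)m_k}$ of order at most $\gb=\gbx$, all finite by hypothesis; hence $\E\hXb^2<\infty$.

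If $\gb>2$, then $\gbx=2\gb-2$, and \eqref{l1c} gives $|\hXb|\le C\sum_{i=1}^4\norm{\bX_i}^{\gb-1}\norm{\bX_{i\pm1}}$. Squaring and expanding again yields a finite sum, now of products of four factors, two of exponent $\gb-1$ (on indices $i,j$) and two of exponent $1$ (on the neighbours $i\pm1,j\pm1$). The key point, and the only genuine bookkeeping, is that no single index $k\in\set{1,\dots,4}$ can accumulate a power exceeding $\gbx$: because a neighbour index always differs from its base index, $k$ occupies at most two of the four positions, and the largest total exponent it can carry is $2(\gb-1)=2\gb-2=\gbx$ (attained only when $k$ fills both exponent-$(\gb-1)$ slots); the mixed and double-neighbour cases give exponents $\gb$ and $2$, both $\le\gbx$ for $\gb\ge2$. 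Integrating and using independence, each term then factors into moments of order at most $\gbx$, all finite; hence $\E\hXb^2<\infty$.

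I expect the main (though mild) obstacle to be precisely this exponent count in the case $\gb>2$; everything else reduces to Cauchy--Schwarz-type bookkeeping together with the elementary fact that $\E\norm{\bX}^{\gbx}<\infty$ forces $\E\norm{\bX}^s<\infty$ for every $0\le s\le\gbx$ (since $t^s\le 1+t^{\gbx}$ for $t\ge0$).
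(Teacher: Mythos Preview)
Your proof is correct and follows essentially the same approach as the paper: reduce $\tXb\in L^2$ to $\hXb\in L^2$ via the conditional-expectation contraction, then feed the pointwise bounds \eqref{l1b} and \eqref{l1c} from \refL{L1} into $\E\hXb^2$, splitting on $\gb\le2$ versus $\gb>2$. The only difference is cosmetic: the paper bypasses your exponent bookkeeping by observing directly that each individual summand $\norm{\bX_i}^{\gb/2}\norm{\bX_{i+1}}^{\gb/2}$ (respectively $\norm{\bX_i}^{\gb-1}\norm{\bX_{i\pm1}}$) lies in $L^2$, since its square factors by independence of adjacent copies into moments of order $\gb$ and $\gb$ (respectively $2\gb-2$ and $2$), each at most $\gbx$; a finite sum of $L^2$ variables is then in $L^2$, so the cross-terms need no separate treatment.
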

For $\gb=1$, this is shown by \citet[Errata]{Lyons}.

\begin{proof}
\pfcase1{$\gb\le2$}
In this case $\gbx=\gb$.
  Recall that, by definition, $\bX_i$ and $\bX_{i\pm1}$ are independent.
Hence, 
\begin{align}
  \E\bigpar{\norm{\bX_i}^{\gb/2}\norm{\bX_{i+1}}^{\gb/2}}^2
=
\E \norm{\bX_i}^{\gb}\E\norm{\bX_{i+1}}^{\gb}<\infty,
\end{align}
so each term in the sum in \eqref{l1b} belongs to $L^2$, and thus
\eqref{l1b} implies $\hXb\in L^2$.
Since $\tXb$ is defined by \eqref{txb1} as a conditional expectation of
$\hXb$, this further implies $\tXb\in L^2$.

\pfcase2{$\gb\ge2$}
In this case $\gbx=2(\gb-1)\ge2$, and the result follows in the same way
from \eqref{l1c}.
\end{proof}

In the following lemma, we consider together with $\bX$ also
a sequences $(\bX\nn)_{n\ge1}$ of random variables in $\cX$.
We then define $\bX_i\nn$ for $i\ge1$ such that the random variables
$\bigpar{\bX_i,(\bX_i\nn)_n}$ in $\cX^\infty$
are independent copies of
$\bigpar{\bX,(\bX\nn)_n}$. 
This extends in the obvious way when we consider sequences 
$\bigpar{(\bX\nn,\bY\nn)}_n$.
We use the superscript ${}\nn$ in the natural way and let \eg{} 
$\hXb\nn$ be defined as in \eqref{hxb} using $\bX_i\nn$.

\begin{lemma}\label{L3}
  Let\/ $\bX$ and\/ $\bX\nn$, $n\ge1$, be random variables in $\cX$, and assume
  that\/
$\E\norm{\bX}^\gbx<\infty$ and 
$\E d(\bX\nn,\bX)^\gbx\to0$ as \ntoo.
Then $\E\bigpar{\hXb\nn-\hXb}^2\to0$ and\/ $\E\bigpar{\tXb\nn-\tXb}^2\to0$.
\end{lemma}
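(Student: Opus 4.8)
The plan is to prove both assertions by the same scheme: show that $\hXb\nn\pto\hXb$ and $\tXb\nn\pto\tXb$, and that the squares $(\hXb\nn)^2$ and $(\tXb\nn)^2$ are uniformly integrable in $n$. By the Vitali convergence theorem (in $L^2$), convergence in probability together with uniform integrability of the squares gives convergence in $L^2$, so $\E(\hXb\nn-\hXb)^2\to0$ and $\E(\tXb\nn-\tXb)^2\to0$. Here $\hXb,\tXb\in L^2$ by \refL{L2}, which also applies to $\bX\nn$ (giving $\hXb\nn,\tXb\nn\in L^2$), because $\norm{\bX\nn}\le\norm{\bX}+d(\bX\nn,\bX)$ shows $\E\norm{\bX\nn}^\gbx<\infty$.

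For the convergence in probability, note first that $\E d(\bX_i\nn,\bX_i)^\gbx\to0$ gives $d(\bX_i\nn,\bX_i)\pto0$ for each $i$, whence by the triangle inequality $\bigabs{d(\bX_i\nn,\bX_j\nn)-d(\bX_i,\bX_j)}\le d(\bX_i\nn,\bX_i)+d(\bX_j\nn,\bX_j)\pto0$; since $t\mapsto t^\gb$ is continuous this yields $d(\bX_i\nn,\bX_j\nn)^\gb\pto d(\bX_i,\bX_j)^\gb$, and summing the four terms of \eqref{hxb} gives $\hXb\nn\pto\hXb$. For $\tXb\nn$ I would use the explicit form \eqref{txb2}: with $g(\bx):=\E_{\bX}d(\bx,\bX)^\gb$ and $g_n(\bx):=\E_{\bX\nn}d(\bx,\bX\nn)^\gb$ one has $\tXb=d(\bX_1,\bX_2)^\gb-g(\bX_1)-g(\bX_2)+\E d(\bX_1,\bX_2)^\gb$, and similarly for $\tXb\nn$. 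The first and last terms converge as above, so the point is $g_n(\bX_1\nn)\pto g(\bX_1)$. Here $g$ is continuous ($\gb$-\Holder{} when $\gb\le1$, since $d^\gb$ is then a metric, and locally Lipschitz when $\gb>1$ by the mean value theorem), so $g(\bX_1\nn)\pto g(\bX_1)$ by the continuous mapping theorem; and $g_n(\bX_1\nn)-g(\bX_1\nn)\pto0$ because $\E\bigabs{g_n(\bX_1\nn)-g(\bX_1\nn)}\le\E\bigabs{d(\bX_1\nn,\bX\nn)^\gb-d(\bX_1\nn,\bX)^\gb}\to0$, which for $\gb\le1$ is bounded by $\E d(\bX\nn,\bX)^\gb$ and for $\gb>1$ follows from the mean value theorem combined with \Holder{}'s inequality and the $\gbx$-moment hypotheses.

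The uniform integrability is the heart of the matter, and the difficulty is that the moment assumption is exactly at the threshold that puts $\hXb$ in $L^2$, so no domination with a spare exponent is available and a genuine equi-integrability argument is needed. I would start from \refL{L1}: for $\gb\le2$, \eqref{l1b} gives $(\hXb\nn)^2\le C R\nn$ with $R\nn:=\bigpar{\sumiv\norm{\bX_i\nn}^{\gb/2}\norm{\bX_{i+1}\nn}^{\gb/2}}^2$ (and for $\gb>2$ one uses \eqref{l1c} instead, with $\gbx=2\gb-2$). Expanding the square writes $R\nn$ as a fixed finite sum of products $\prod_j\norm{\bX_j\nn}^{\gb a_j/2}$ over the four independent indices, with $a_j\ge0$ and $\sum_j a_j=4$, so each exponent is $\le\gb=\gbx$. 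Since $\bigabs{\norm{\bX\nn}-\norm{\bX}}\le d(\bX\nn,\bX)$ gives $\norm{\bX\nn}\to\norm{\bX}$ in $L^\gbx$, a standard fact (convergence in probability plus convergence of $L^p$-norms implies $L^p$-convergence) yields $\norm{\bX\nn}^{s}\to\norm{\bX}^{s}$ in $L^{\gbx/s}$, and a fortiori in $L^1$, for every $0<s\le\gbx$. Factoring each expectation by independence across $j$ as $\prod_j\E\norm{\bX\nn}^{\gb a_j/2}$ then shows $\E R\nn\to\E R$, where $R$ is the same expression built from $\bX_j$. As $R\nn\ge0$ and $R\nn\pto R$, convergence of the means forces $R\nn\to R$ in $L^1$ (a variant of Scheffé's lemma); an $L^1$-convergent sequence is uniformly integrable, hence so are the dominated squares $(\hXb\nn)^2$. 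For $\tXb\nn$ I would avoid repeating the computation: conditional Jensen gives $(\tXb\nn)^2=\bigpar{\E(\hXb\nn\mid\bX_1\nn,\bX_2\nn)}^2\le\E\bigpar{(\hXb\nn)^2\mid\bX_1\nn,\bX_2\nn}$, and since conditional expectations of a uniformly integrable family are again uniformly integrable, $(\tXb\nn)^2$ is uniformly integrable in $n$.

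Combining the two steps gives $\hXb\nn\llto\hXb$ and $\tXb\nn\llto\tXb$, which is the assertion. The main obstacle is the uniform integrability of $(\hXb\nn)^2$: the product-and-factorization argument above seems to be the only route under the threshold moment condition, and its cleanest formulation is a separate lemma on $L^1$-convergence of products of independent nonnegative sequences — presumably among the uniform-integrability results collected in the appendix. The secondary technical point is the estimate $g_n(\bX_1\nn)\pto g(\bX_1)$ for $\gb>1$, where the simple metric bound available for $\gb\le1$ must be replaced by a mean-value estimate controlled through the $\gbx$ moments.
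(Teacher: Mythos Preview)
Your argument is correct. The $\hXb$ part matches the paper's proof: $\hXb\nn\pto\hXb$ together with uniform integrability of $(\hXb\nn)^2$ obtained from \refL{L1}. Your Scheff\'e-type route to the latter (show $\E R\nn\to\E R$ and $R\nn\pto R$, deduce $R\nn\to R$ in $L^1$, hence uniform integrability) works; the paper instead first observes that $\norm{\bX\nn}^\gbx$ is uniformly integrable and then invokes the appendix \refL{LA1} (products of independent uniformly integrable sequences are uniformly integrable), applied to the terms $\norm{\bX_i\nn}^{\gb}\norm{\bX_{i+1}\nn}^{\gb}$ dominating $(\hXb\nn)^2$ after squaring \eqref{l1b} (or \eqref{l1c} when $\gb>2$). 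That is precisely the lemma you anticipated.

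For $\tXb$ the paper is considerably shorter than your approach. Instead of re-establishing convergence in probability via the explicit formula \eqref{txb2} and a separate uniform-integrability argument via conditional Jensen, it notes that $\tXb$ and $\tXb\nn$ are conditional expectations with respect to the \emph{same} $\sigma$-field $\cF$ generated by $\bX_1,\bX_2,\bX_1\nn,\bX_2\nn$: the enlargement is harmless because $(\bX_3,\bX_4,\bX_3\nn,\bX_4\nn)$ is independent of $\cF$, so $\E(\hXb\mid\cF)=\tXb$ and $\E(\hXb\nn\mid\cF)=\tXb\nn$. Then $\tXb\nn-\tXb=\E(\hXb\nn-\hXb\mid\cF)$, and the $L^2$ contraction of conditional expectation gives $\E(\tXb\nn-\tXb)^2\le\E(\hXb\nn-\hXb)^2\to0$ in one line. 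This avoids the continuity of $g$, the $g_n(\bX_1\nn)\to g(\bX_1)$ estimate (and its mean-value-plus-\Holder{} justification for $\gb>1$), and the uniform integrability of $(\tXb\nn)^2$ altogether.
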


\begin{proof}
We use without further comments some elementary facts about uniform
integrability, see \eg{} \cite[Theorems 5.5.4, 5.4.5 and 5.4.6]{Gut}.

  Since $\E d(\bX\nn,\bX)^\gbx\to0$,
the sequence $d(\bX\nn,\bX)^\gbx$ of random variables is uniformly integrable.
The triangle inequality yields
$\norm{\bX\nn}\le d(\bX\nn,\bX)+\norm{\bX}$, and thus
\begin{align}\label{kbt}
  \norm{\bX\nn}^\gbx\le C\bigpar{ d(\bX\nn,\bX)^\gbx+\norm{\bX}^\gbx},
\end{align}
and it follows that the sequence $\norm{\bX\nn}^\gbx$ is uniformly
integrable.
\refL{L1} and the argument in the proof of \refL{L2}, using \refL{LA1} in
the appendix, show that the sequence $(\hXb\nn)^2$ is uniformly integrable.

Furthermore, we have $d(\bX\nn,\bX)\pto0$, and thus
$d(\bX_i\nn,\bX_i)\pto0$ for every $i$.
The triangle inequality then implies
$d(\bX_i\nn,\bX_j\nn)\pto d(\bX_i,\bX_j)$ for every $i$ and $j$,
and thus the definition \eqref{hxb} implies $\hXb\nn\pto\hXb$.

This and the uniform square integrability just established yield
$\E\bigpar{\hXb\nn-\hXb}^2\to0$.

Furthermore, by \eqref{txb1},
if $\cF$ is the \gsf{} generated by 
all $\bX_j$ and $\bX_j\nn$ with $j\in\set{1,2}$, then $\tXb=\E(\hXb\mid\cF)$
and $\tXb\nn=\E(\hXb\nn\mid\cF)$. Consequently,
\begin{align}
  \E\bigabs{\tXb\nn-\tXb}^2
=
  \E\bigabs{\E(\hXb\nn-\hXb\mid\cF)}^2
\le   \E\bigabs{\hXb\nn-\hXb}^2
\to0.
\end{align}
\end{proof}

\begin{theorem}\label{T1}
\refDs{D1}--\ref{D3} are well-defined; more precisely, 
for any $\gb>0$,
assuming the stated moment conditions,
the expectations in \eqref{d1}, \eqref{d2} and \eqref{d3} are finite.
Furthermore, any two of these definitions yield the same result,
whenever the moment conditions in both are satisfied.
\end{theorem}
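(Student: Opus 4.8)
The plan is to treat finiteness and agreement separately, reducing everything to the facts already established in \refLs{L0}, \ref{L2} and~\ref{L3}. For finiteness, under the hypothesis of \refD{D1} (finite $2\gb$ moments) there is nothing new, since \refL{L0} already asserts that all expectations in \eqref{d1}, \eqref{d2} and \eqref{d3} are finite. Under the weaker hypothesis of \refDs{D2} and~\ref{D3} (finite $\gbx$ moments), \refL{L2} gives $\hXb,\hYb,\tXb,\tYb\in L^2$, and then the \CSineq{} shows at once that $\E\bigsqpar{\hXb\hYb}$ and $\E\bigsqpar{\tXb\tYb}$ are finite. This disposes of the first assertion.

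For agreement, I would split into cases according to which moment conditions are available. Whenever the $2\gb$ condition holds — which is exactly the situation whenever \refD{D1} is one of the two definitions being compared, and also covers the comparison of \refDs{D2} and~\ref{D3} in that stronger regime — \refL{L0} already gives $\xDCb(\bX,\bY)=\hDCb(\bX,\bY)=\tDCb(\bX,\bY)$. The only remaining case is to show $\hDCb(\bX,\bY)=\tDCb(\bX,\bY)$ assuming merely $\E\norm{\bX}^\gbx<\infty$ and $\E\norm{\bY}^\gbx<\infty$; this is the heart of the theorem.

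Here the direct approach fails: expanding $\hXb\hYb$ and $\tXb\tYb$ as in the proof of \refL{L0} produces individual cross-terms such as $\E\bigsqpar{d(\bX_1,\bX_2)^\gb d(\bY_1,\bY_2)^\gb}$ which need \emph{not} be finite when only $\gbx$ moments are assumed, so the termwise identity driving \refL{L0} is unavailable. I would instead argue by truncation and continuity. Using the fixed point $\ox$, set $\bX\nn:=\bX$ on the event $\set{\norm{\bX}\le n}$ and $\bX\nn:=\ox$ otherwise, and define $\bY\nn$ analogously from $\oy$; these are deterministic functions of $\bX$ and $\bY$, so the whole family lives on one probability space and independent copies are obtained from those of $(\bX,\bY)$. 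Each $\bX\nn$ is bounded, hence has finite $2\gb$ moment, and $d(\bX\nn,\bX)^\gbx=\norm{\bX}^\gbx\indic{\norm{\bX}>n}$, so $\E d(\bX\nn,\bX)^\gbx\to0$ by dominated convergence; likewise for $\bY\nn$. \refL{L0} then yields $\hDCb(\bX\nn,\bY\nn)=\tDCb(\bX\nn,\bY\nn)$ for every $n$, that is, $\tfrac14\E\bigsqpar{\hXb\nn\hYb\nn}=\E\bigsqpar{\tXb\nn\tYb\nn}$. Finally, \refL{L3} (applied to each coordinate) gives $\hXb\nn\llto\hXb$, $\hYb\nn\llto\hYb$, $\tXb\nn\llto\tXb$ and $\tYb\nn\llto\tYb$, whence the \CSineq{} lets me pass to the limit in both bilinear expressions, obtaining $\tfrac14\E\bigsqpar{\hXb\hYb}=\E\bigsqpar{\tXb\tYb}$, as required.

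The main obstacle is exactly the non-integrability of the individual cross-terms just mentioned; the truncation argument is designed to avoid ever expanding the products, working instead with $\hXb\hYb$ and $\tXb\tYb$ as single $L^2$-paired quantities and transporting the termwise identity of \refL{L0} across the limit via the $L^2$ convergence supplied by \refL{L3}.
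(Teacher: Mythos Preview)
Your proposal is correct and matches the paper's proof essentially step for step: the paper too handles the $2\gb$-moment case by \refL{L0}, obtains finiteness under $\gbx$ moments from \refL{L2}, and then proves $\hDCb=\tDCb$ under the weaker hypothesis by the same truncation $\bX\nn:=\bX\indic{\norm{\bX}\le n}+\ox\indic{\norm{\bX}>n}$, invoking \refL{L3} for $L^2$ convergence and \refL{L0} on each truncation before passing to the limit.
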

\begin{proof}
\refL{L0} shows that all three definitions are valid and agree under the
condition of \refD{D1}, \ie, when 
$\E \norm{\bX}^{2\gb}<\infty$ and $\E \norm{\bY}^{2\gb}<\infty$.

It remains to show that \eqref{d2} and \eqref{d3} are finite and agree
under the weaker assumption
$\E \norm{\bX}^{\gbx}<\infty$ and $\E \norm{\bY}^{\gbx}<\infty$.
In this case, \refL{L2} shows that $\hXb,\hYb,\tXb,\tYb\in L^2$, and thus
\eqref{d2} and \eqref{d3} are finite.

We do not know a simple direct argument to show the equality of the two
expressions, so we use truncations as follows. 
Let, for $n\ge1$,
\begin{align}
  \bX\nn:=
  \begin{cases}
    \bX, & \norm{\bX}\le n,
\\
\ox, & \text{otherwise},
  \end{cases}
\end{align}
and define $\bY\nn$ similarly.
Then 
\begin{align}
\E d(\bX\nn,\bX)^\gbx
= \E\bigsqpar{\norm{\bX}^\gbx\indic{\norm{\bX}>n}}
\asto0,
\qquad \asntoo.  
\end{align}
Thus, \refL{L3} yields 
$\normll{\hXb\nn-\hXb}\to0$ and $\normll{\tXb\nn-\tXb}\to0$.
Similarly,
$\normll{\hYb\nn-\hYb}\to0$ and $\normll{\tYb\nn-\tYb}\to0$.

The $L^2$-convergence just shown implies that, as \ntoo,
\begin{align}\label{tix}
  \hDCb(\bX\nn,\bY\nn)
=\tfrac{1}4\E\bigsqpar{\hXb\nn\hYb\nn}
\to \tfrac{1}{4}\E\bigsqpar{\hXb\hYb}
=   \hDCb(\bX,\bY)
\end{align}
and similarly
\begin{align}\label{tics}
  \tDCb(\bX\nn,\bY\nn)
=\E\bigsqpar{\hXb\nn\hYb\nn}
\to 
\E\bigsqpar{\hXb\hYb}=   
\tDCb(\bX,\bY),
\end{align}
Furthermore, for each $n$, $\norm{\bX\nn}$ and $\norm{\bY\nn}$ are bounded,
and thus \refL{L0} applies and shows
$  \hDCb(\bX\nn,\bY\nn)=  \tDCb(\bX\nn,\bY\nn)$.
Consequently, \eqref{tix}--\eqref{tics} imply
$\hDCb(\bX,\bY)=  \tDCb(\bX,\bY)$.
\end{proof}

We return in \refS{Sopt} to the case when the moment conditions fail.

\section{Continuity and consistency}\label{Scon}
The lemmas in \refS{Sexist} yield also  continuity results.
Unspecified convergence is as \ntoo. 

\begin{theorem}\label{T3}
Let $\gb>0$.
 Let\/ $(\bX,\bY)$ and\/ $(\bX\nn,\bY\nn)$, $n\ge1$, 
be pairs of random variables in $\cX\times\cY$, and assume
  that\/
$\E\norm{\bX}^\gbx<\infty$,
$\E\norm{\bY}^\gbx<\infty$ and,
as \ntoo, 
$\E d(\bX\nn,\bX)^\gbx\to0$ 
and\/
$\E d(\bY\nn,\bY)^\gbx\to0$. 
Then, 
\begin{align}
\DCb(\bX\nn,\bY\nn)\to\DCb(\bX,\bY).  
\end{align}
\end{theorem}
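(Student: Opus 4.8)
The plan is to prove the theorem directly from \refD{D3}: by \refT{T1} the stated moment conditions make $\tDCb$ a valid expression for $\DCb$, and \refL{L3} already supplies exactly the $L^2$-convergence we need, so the argument is essentially the bilinear version of \refL{L3}.

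First I would place all the variables on one probability space, using the construction in the paragraph preceding \refL{L3}: for each $i\ge1$ let $\bigpar{(\bX_i,\bY_i),(\bX_i\nn,\bY_i\nn)_n}$ be independent copies of $\bigpar{(\bX,\bY),(\bX\nn,\bY\nn)_n}$. This is harmless, since $\DCb(\bX\nn,\bY\nn)$ and $\DCb(\bX,\bY)$ each depend only on the relevant joint law, so we are free to realize the i.i.d.\ copies jointly. On this space \refD{D3} gives
\begin{align*}
  \DCb(\bX\nn,\bY\nn)=\E\bigsqpar{\tXb\nn\tYb\nn},
  \qquad
  \DCb(\bX,\bY)=\E\bigsqpar{\tXb\tYb}.
\end{align*}

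Applying \refL{L3} once to the $\cX$-valued variables and once to the $\cY$-valued ones --- the hypotheses $\E\norm{\bX}^\gbx<\infty$, $\E d(\bX\nn,\bX)^\gbx\to0$ and their $\bY$-analogues are precisely those of \refL{L3} --- yields $\normll{\tXb\nn-\tXb}\to0$ and $\normll{\tYb\nn-\tYb}\to0$. To pass from $L^2$-convergence of the two factors to convergence of the expectation of their product, I would write
\begin{align*}
  \E\bigsqpar{\tXb\nn\tYb\nn}-\E\bigsqpar{\tXb\tYb}
  &=\E\bigsqpar{(\tXb\nn-\tXb)\tYb\nn}+\E\bigsqpar{\tXb(\tYb\nn-\tYb)}
\end{align*}
and bound each term by the \CSineq, obtaining $\normll{\tXb\nn-\tXb}\,\normll{\tYb\nn}$ and $\normll{\tXb}\,\normll{\tYb\nn-\tYb}$ respectively. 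Here $\normll{\tYb\nn}\to\normll{\tYb}<\infty$ is bounded and $\normll{\tXb}<\infty$, while the two difference-norms tend to $0$; hence both terms vanish as \ntoo, which is the claim.

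I do not expect a genuine obstacle: the hard analytic work --- the uniform square-integrability of $(\hXb\nn)^2$ and the consequent $L^2$-convergence of $\tXb\nn$ --- is exactly what \refL{L3} provides, and the present statement is its straightforward bilinear corollary. The only point needing a moment's care is the first step, ensuring that the i.i.d.\ copies of $(\bX\nn,\bY\nn)$ and of $(\bX,\bY)$ are realized on a common space, so that $\tXb\nn,\tXb$ (and $\tYb\nn,\tYb$) are genuine elements of $L^2(\P)$ to which \refL{L3} and the \CSineq{} apply.
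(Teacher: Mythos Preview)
Your proof is correct and follows essentially the same approach as the paper: apply \refL{L3} to obtain $L^2$-convergence of the factors, then conclude convergence of the expectation of the product. The only cosmetic difference is that the paper works with $\hXb,\hYb$ and \refD{D2} rather than $\tXb,\tYb$ and \refD{D3}; since \refL{L3} supplies both conclusions, this does not matter.
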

\begin{proof}
  \refL{L3} yields 
$\hXb\nn\llto\hXb$ and $\hYb\nn\llto\hYb$, 
and thus
\begin{align}
\DCb(\bX\nn,\bY\nn)
=\frac{1}{4}\E\bigsqpar{\hXb\nn\hYb\nn}
\to \frac{1}{4}\E\bigsqpar{\hXb\hYb}=\DCb(\bX,\bY).  
\end{align}
\end{proof}

We can extend this result and assume only
convergence in distribution of 
$(\bX\nn,\bY\nn)$ together with a moment condition.

\begin{theorem}
  \label{T4}
Let $\gb>0$.
 Let\/ $(\bX,\bY)$ and\/ $(\bX\nn,\bY\nn)$, $n\ge1$, 
be pairs of random variables in $\cX\times\cY$, and assume
  that, as \ntoo,
$(\bX\nn,\bY\nn)\dto(\bX,\bY)$.
Assume further one of the following two conditions.
\begin{romenumerate}
\item \label{T4a}
The sequences $\norm{\bX\nn}^\gbx$ and $\norm{\bY\nn}^\gbx$
are \ui.
\item \label{T4b}
$\E\norm{\bX\nn}^\gbx\to\E\norm{\bX}^\gbx<\infty$
and\/
$\E\norm{\bY\nn}^\gbx\to\E\norm{\bY}^\gbx<\infty$.
\end{romenumerate}
Then, 
\begin{align}\label{t4}
\DCb(\bX\nn,\bY\nn)\to\DCb(\bX,\bY).  
\end{align}
\end{theorem}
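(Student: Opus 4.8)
The plan is to reduce \refT{T4} to the already-established continuity result \refT{T3} by constructing, via the Skorokhod representation theorem, a coupling in which the distributional convergence $(\bX\nn,\bY\nn)\dto(\bX,\bY)$ becomes almost sure convergence, and then verifying that the hypotheses of \refT{T3} hold in this coupled space. Since $\DCb$ depends only on the joint distribution of the pair (as noted after \refD{D3}), replacing $(\bX\nn,\bY\nn)$ and $(\bX,\bY)$ by coupled versions with the same marginal distributions does not change the values of $\DCb(\bX\nn,\bY\nn)$ or $\DCb(\bX,\bY)$, so it suffices to prove \eqref{t4} for the coupled variables.

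First I would invoke Skorokhod's representation theorem on the separable metric space $\cX\times\cY$: since $(\bX\nn,\bY\nn)\dto(\bX,\bY)$, there exist random variables $(\bX'\nn,\bY'\nn)$ and $(\bX',\bY')$ on a common probability space, with the same distributions as $(\bX\nn,\bY\nn)$ and $(\bX,\bY)$ respectively, such that $(\bX'\nn,\bY'\nn)\to(\bX',\bY')$ almost surely. In particular $d(\bX'\nn,\bX')\asto0$ and $d(\bY'\nn,\bY')\asto0$. To apply \refT{T3} to the primed variables I must upgrade this almost sure convergence to convergence of the $\gbx$-th moments of the distances, i.e.\ establish $\E d(\bX'\nn,\bX')^\gbx\to0$ and the analogous statement for $\bY$.

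The main obstacle is exactly this moment convergence, and it is where the two hypotheses \ref{T4a} and \ref{T4b} enter. Under \ref{T4a}, the sequences $\norm{\bX\nn}^\gbx$ and $\norm{\bY\nn}^\gbx$ are uniformly integrable; since uniform integrability is a distributional property, the same holds for the primed sequences $\norm{\bX'\nn}^\gbx$. Together with the estimate \eqref{kbt}, namely $\norm{\bX'\nn}^\gbx\le C\bigpar{d(\bX'\nn,\bX')^\gbx+\norm{\bX'}^\gbx}$, and the fact that $\E\norm{\bX'}^\gbx=\E\norm{\bX}^\gbx<\infty$ (which follows because $\bX\nn\dto\bX$ with uniformly integrable $\gbx$-moments forces $\E\norm{\bX}^\gbx<\infty$), one deduces that the sequence $d(\bX'\nn,\bX')^\gbx$ is uniformly integrable; combined with $d(\bX'\nn,\bX')^\gbx\asto0$, uniform integrability yields $L^1$-convergence, i.e.\ $\E d(\bX'\nn,\bX')^\gbx\to0$. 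For \ref{T4b} I would first note that convergence in distribution $\norm{\bX\nn}\dto\norm{\bX}$ (a consequence of the continuous mapping theorem applied to $\bx\mapsto\norm{\bx}$) together with the moment convergence $\E\norm{\bX\nn}^\gbx\to\E\norm{\bX}^\gbx<\infty$ implies, by a standard criterion (see the uniform-integrability results of Appendix, e.g.\ \cite[Theorem 5.5.9]{Gut}), that $\norm{\bX\nn}^\gbx$ is uniformly integrable; thus \ref{T4b} reduces to \ref{T4a}.

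Having established $\E d(\bX'\nn,\bX')^\gbx\to0$ and $\E d(\bY'\nn,\bY')^\gbx\to0$, together with $\E\norm{\bX'}^\gbx<\infty$ and $\E\norm{\bY'}^\gbx<\infty$, the hypotheses of \refT{T3} are satisfied by the coupled variables, so $\DCb(\bX'\nn,\bY'\nn)\to\DCb(\bX',\bY')$. Finally, since the primed variables have the same joint distributions as the originals and $\DCb$ is a functional of the joint distribution, this is precisely \eqref{t4}. The routine parts are the two reductions via uniform integrability; the only genuinely substantive idea is the Skorokhod coupling that converts weak convergence into the almost sure (plus moment) convergence required by \refT{T3}.
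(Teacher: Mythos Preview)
Your approach is essentially identical to the paper's: Skorokhod coupling to convert distributional convergence to almost sure convergence, then verify the moment hypothesis of \refT{T3} via uniform integrability, with \ref{T4b} reduced to \ref{T4a} by the standard criterion \cite[Theorem 5.5.9]{Gut}.

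One small slip: the inequality you cite as \eqref{kbt}, namely $\norm{\bX'\nn}^\gbx\le C\bigpar{d(\bX'\nn,\bX')^\gbx+\norm{\bX'}^\gbx}$, points the wrong way for what you need. To deduce uniform integrability of $d(\bX'\nn,\bX')^\gbx$ from that of $\norm{\bX'\nn}^\gbx$ and the integrability of $\norm{\bX'}^\gbx$, you want the reverse triangle inequality $d(\bX'\nn,\bX')\le\norm{\bX'\nn}+\norm{\bX'}$, which gives $d(\bX'\nn,\bX')^\gbx\le C\bigpar{\norm{\bX'\nn}^\gbx+\norm{\bX'}^\gbx}$; this is exactly what the paper uses (it says ``similarly to \eqref{kbt}'' rather than invoking \eqref{kbt} itself). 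With that correction your argument is complete and matches the paper's proof.
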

\begin{proof}
  \pfitemref{T4a}
Since $\cX\times\cY$ is a separable metric space,
we may by the Skorohod coupling theorem \cite[Theorem~4.30]{Kallenberg}
without loss of generality assume that $(\bX\nn,\bY\nn)\asto(\bX,\bY)$.
Furthermore, the assumption in \ref{T4a} implies that
$\sup_n\E\norm{\bX\nn}^\gbx<\infty$,
and thus
$\E\norm{\bX}^\gbx<\infty$ by Fatou's lemma.
Since $d(\bX\nn,\bX)\le\norm{\bX\nn}+\norm{\bX}$, it follows, similarly
to \eqref{kbt}, that the sequence $d(\bX\nn,\bX)^\gbx$ is \ui.
Since we have assumed $d(\bX\nn,\bX)\asto0$,
this implies $\E d(\bX\nn,\bX)^\gbx\to0$.
Similarly, $\E d(\bY\nn,\bY)^\gbx\to0$.
Thus \refT{T3} applies and yields \eqref{t4}.

\pfitemref{T4b}
We have $\bX\nn\dto\bX$ and thus $\norm{\bX\nn}\dto\norm{\bX}$.
This and our assumption $\E\norm{\bX\nn}^\gbx\to\E\norm{\bX}^\gbx$ imply
that the sequence $\norm{\bX\nn}^\gbx$ is \ui{}
\cite[Theorem 5.5.9]{Gut}.
The same holds for ${\bY\nn}$, and thus part \ref{T4a} applies.
\end{proof}

\begin{remark}\label{RT4}
Suppose that the metric spaces $\cX$ and $\cY$ are complete.
(This ensures that all probability measures are tight;
see \eg{} \cite{Billingsley}.)
Give $\cX\times\cY$ the metric (for example)
\begin{align}
d\bigpar{(\bx_1,\by_1),(\bx_2,\by_2)}:=d_\cX(\bx_1,\bx_2)+d_\cY(\by_1,\by_2).   
\end{align}
Let $\cP^\gb\xpar{\cXY}$ be the space of all Borel probability measures
$\mu$ on $\cXY$ 
such that $\int_{\cXY}\norm{(\bx,\by)}^\gb\dd\mu(\bx,\by)<\infty$.
In other words, $\cP^\gb\xpar{\cXY}$ is the space of all distributions of 
pairs of random
variables $(\bX,\bY)\in\cXY$ such that $\E\norm{\bX}^\gb<\infty$
and $\E\norm{\bY}^\gb<\infty$.

Define a metric in $\cP^\gb\xpar{\cXY}$ by
\begin{align}
  d_\gb(\mu,\mu'):=
  \begin{cases}
\inf\bigset{\E \bigsqpar{d\bigpar{(\bX,\bY),(\bX',\bY')}^\gb}},    
& 0<\gb\le1,
\\
\inf\bigset{\E \bigsqpar{d\bigpar{(\bX,\bY),(\bX',\bY')}^\gb}^{1/\gb}},    
& \gb>1.
  \end{cases}
\end{align}
taking the infimum over all pairs of random variables 
$(\bX,\bY)$ and $(\bX',\bY')$ in $\cXY$
such that $(\bX,\bY)\sim\mu$ and $(\bX',\bY')\sim\mu'$;
see \eg{}
\cite[pp.~796--799 (in the English translation)]{BogachevK}.
(This is known under various names, including \emph{Kantorovich distance},
\emph{Wasserstein distance} and \emph{minimal $L^\gb$ distance},
see also 
\cite{Wasserstein-metric}.)
Convergence of a sequence $\cL(\bX\nn,\bY\nn)$
of distributions to $\cL(\bX,\bY)$ in this metric is equivalent to 
convergence in distribution $(\bX\nn,\bY\nn)\dto(\bX,\bY)$
(\ie, weak convergence of the distributions)
together with uniform integrability of $\norm{\bX\nn,\bY\nn)}^\gbx$
(or, equivalently, convergence of  moments 
$\E\norm{(\bX\nn,\bY\nn)}^\gbx\to\E\norm{(\bX,\bY)}^\gbx$).

\refT{T3} then says that $\DCb$ is a continuous functional on
$\cP^\gbx\xpar{\cXY}$,
for every $\gb>0$.
\end{remark}

\subsection{Consistency}

Let $\mu\in\cP(\cX\times\cY)$ be the distribution of $(\bX,\bY)$.
Then, $(\bX_1,\bY_1),\dots$ can be regarded as a sequence of independent
samples from $\mu$.
Let $\nu_n$ be the empirical distribution of the first $n$ samples, \ie,
\begin{align}\label{nun}
  \nu_n := \frac{1}{n}\sumin \gd_{(\bX_i,\bY_i)} \in \cP(\cX\times\cY).
\end{align}
Note that $\nu_n$ is a random probability measure.
Hence, its distance covariance $\DCb(\nu_n)$ is a random variable.
The following theorem shows that this random variable converges to
$\DCb(\mu)$ \as; in other words, the distance covariance of the empirical
distribution is a consistent estimator of the covariance distance of $\mu$.
As said in the introduction, 
this was proved
by \citet{SRB2007} for the Euclidean case with $\gb\in(0,2)$;
for general metric spaces, with $\gb=1$, 
the result was stated by \citet{Lyons},  
but his proof requires a stronger moment condition.
Second moments are enough for $\gb=1$, 
see \cite[Remark 3]{SRB2007};
\citet{Jakobsen} improved this and showed that $5/3$
moments are enough. The proof in \cite[Remark 3]{SRB2007}
generalizes to arbitrary $\gb>0$, 
assuming $2\gb$ moments.

We can now show consistency assuming only
$\gbx$ moments, as required by our definitions.
In particular, this shows that for $\gb=1$,
    first moments  suffice, as stated in \cite{Lyons}.

\begin{theorem}\label{Tcons}
  Let $\mu$ be the distribution of $(\bX,\bY)\in\cX\times\cY$
and assume that
$\E\norm{\bX}^\gbx,\E\norm{\bY}^\gbx<\infty$.
If $\nu_n$ is the empirical distribution \eqref{nun}, then
\begin{align}\label{tcons}
\DCb(\nu_n)\asto  \DCb(\mu).
\end{align}
\end{theorem}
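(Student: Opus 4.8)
The plan is to prove consistency by expressing $\DCb(\nu_n)$ as an explicit average over the sample and showing it converges to $\DCb(\mu)$ by a suitable law of large numbers, with the main work being the control of error terms coming from the finite-sample empirical measure. The cleanest route uses \refD{D3}: for the empirical distribution $\nu_n$, the quantities $\tXb$ and $\tYb$ become \emph{$V$-statistic}-type averages, since the inner expectations $\E_{\bX} d(\bX_i,\bX)^\gb$ in \eqref{txb2} are replaced by sample averages $\frac1n\sum_{k=1}^n d(\bX_i,\bX_k)^\gb$. Thus $\DCb(\nu_n)$ is a degree-$6$ (or so) $V$-statistic in the variables $(\bX_1,\bY_1),\dots,(\bX_n,\bY_n)$, and the target $\DCb(\mu) = \E[\tXb\tYb]$ is the corresponding population functional.

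First I would write out $\DCb(\nu_n)$ explicitly. Using \eqref{d6} and \refR{RD6}, $\DCb(\nu)=\E_\nu[\psib\psibY]$ is a $V$-statistic: averaging the kernel $\psib(\bx_1,\bx_2,\bx_3,\bx_4)\,\psibY(\by_1,\by_2,\by_5,\by_6)$ over all index tuples drawn with replacement from $\{1,\dots,n\}$. The natural tool is then the strong law of large numbers for $V$-statistics (equivalently $U$-statistics, since they differ by a lower-order term that vanishes a.s.). The classical SLLN for $U$-statistics of \citet{Hoeffding}-type requires only that the kernel be integrable, i.e.\ $\E\bigabs{\psib\psibY}<\infty$. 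Here the key integrability input is precisely \refL{L2}: under $\E\norm{\bX}^\gbx,\E\norm{\bY}^\gbx<\infty$ we have $\hXb,\hYb\in L^2$, so by \CSineq{} the product kernel has finite first moment (indeed finite, since $\hXb$ and $\hYb$ involve disjoint index sets and are separately square-integrable). This gives a.s.\ convergence of the $U$-statistic to its mean $\E[\psib\psibY]=\E[\tXb\tYb]=\DCb(\mu)$, where the middle equality is the Fubini identity from \refR{RD6}, valid since $\hXb\in L^2$.

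The main obstacle will be that $\DCb(\nu_n)$ is a \emph{$V$-statistic}, not a $U$-statistic: the averaging is over all tuples including those with repeated indices, whereas the clean SLLN applies to $U$-statistics where indices are distinct. I would handle this by bounding the difference between the $V$-statistic and the associated $U$-statistic. The diagonal terms (those with at least one repeated index) contribute a sum of $O(n^{-1})$ of the total number of tuples, each term bounded by kernel values; to show these vanish a.s.\ one needs the diagonal contributions to be $o(n)$ relative to the normalization, which follows from the integrability $\E\bigabs{\hXb\hYb}<\infty$ together with the fact that restricting to repeated indices reduces the effective number of free summation variables. A convenient way to make this rigorous is to bound each diagonal sum by a product of averages of the form $\frac1n\sum_i \norm{\bX_i}^\gbx$, which converge a.s.\ by the ordinary SLLN (this is exactly where the $\gbx$-moment hypothesis is used), and to check the combinatorial count so that every diagonal contribution carries a strictly negative power of $n$.

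Alternatively, and perhaps more in the spirit of the continuity results just proved, I would try to deduce \eqref{tcons} directly from \refT{T4}: one has $\nu_n\to\mu$ weakly a.s.\ by the Glivenko--Cantelli / Varadarajan theorem for empirical measures on separable metric spaces, and the SLLN gives $\int\norm{\bx}^\gbx\dd\nu_n=\frac1n\sum_i\norm{\bX_i}^\gbx\asto\E\norm{\bX}^\gbx$, with the analogous statement for $\by$; condition \ref{T4b} of \refT{T4} is then satisfied a.s.\ (along the random sequence $\nu_n$), yielding $\DCb(\nu_n)\asto\DCb(\mu)$. The delicate point in this second approach is that \refT{T4} is stated for deterministic convergent sequences, so one must argue that on the a.s.\ event where both the weak convergence and the moment convergence hold, the deterministic theorem applies pathwise; this is routine once one fixes a single null set outside which all the required convergences hold simultaneously. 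I expect the $V$-versus-$U$-statistic diagonal estimate (first approach) or the pathwise application of \refT{T4} (second approach) to be the only genuinely nontrivial step, the rest being the bookkeeping of expanding \eqref{d6} and invoking the SLLN.
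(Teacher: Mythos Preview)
Your second approach is exactly the paper's proof: apply Varadarajan's theorem to get $\nu_n\to\mu$ weakly a.s., use the ordinary SLLN to get $\int\norm{\bx}^\gbx\dd\nu_n\asto\E\norm{\bX}^\gbx$ (and similarly for $\bY$), and then invoke \refT{T4}\ref{T4b} pathwise on the almost-sure event where all these convergences hold. Your remark that the ``delicate point'' is merely the pathwise application on a single null set is correct; this is routine.

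Your first approach, via $U$/$V$-statistics, is precisely the route that has caused trouble historically, and your sketch of the diagonal control is where the gap lies. The issue is that when an index is repeated across the $\bX$-part and the $\bY$-part of the kernel (say $i_1=i_2$, which appears in both $\psib$ and $\psibY$), the bound you propose produces cross-terms of the type $\norm{\bX_i}^{\gb}\norm{\bY_i}^{\gb}$ (or, via \refL{L1}, $\norm{\bX_i}^{\gb/2}\norm{\bY_i}^{\gb/2}$ multiplied by similar factors at other coupled indices), and the integrability of such products is \emph{not} implied by the separate $\gbx$-moment hypotheses on $\bX$ and $\bY$. This is exactly why \citet{Lyons}'s original argument failed (see the Errata) and why \citet{Jakobsen} needed the extra condition $\E(\norm{\bX}\,\norm{\bY})^{5/6}<\infty$ to push the $V$-statistic approach through. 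So your proposed bound ``by products of averages $\frac1n\sum_i\norm{\bX_i}^\gbx$'' does not close the argument under the minimal moment assumption; one would need either a sharper structural cancellation in the diagonal kernels or an additional moment hypothesis. The virtue of the paper's approach (your second one) is that it sidesteps this entirely by packaging everything into the continuity theorem \refT{T4}, whose proof via \refL{L1} and uniform integrability never forces one to integrate a product $\norm{\bX}^\gb\norm{\bY}^\gb$ at a common sample point.
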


\begin{proof}
Conditionally on the sequence $(\nu_n)_n$ of empirical measures,
let $(\bX\nn,\bY\nn)$ be a random variable with distribution $\nu_n$.
Since $\cX\times\cY$ is a separable metric space, the distribution $\nu_n$
converges \as{} to $\mu$ (in the usual weak topology);
see \cite{Varadarajan} or \cite[Problem 4.4]{Billingsley}.
In other words, \as,
conditionally on $(\nu_n)_n$, 
$(\bX\nn,\bY\nn)\dto (\bX,\bY)$.

Furthermore, by the definition \eqref{nun} of $\nu_n$,
conditioning on the sequence $(\nu_k)_k$, 
\begin{align}
  \E\bigpar{ \norm{\bX\nn}^\gbx \mid (\nu_k)_k}
=\frac{1}{n}\sumin \norm{\bX_i\nn}^\gbx.
\end{align}
Hence, the strong law of large numbers (in $\bbR$) 
shows that \as, conditioned on $(\nu_k)_k$,
$  \E \norm{\bX\nn}^\gbx \asto \E\norm{\bX}^\gbx $,
and similarly also
$  \E \norm{\bY\nn}^\gbx \asto \E\norm{\bY}^\gbx $.
Consequently, \refT{T4}\ref{T4b} applies \as{} to the sequence
$(\nu_n)_n$ and the corresponding random variables $(\bX\nn,\bY\nn)$;
hence $\DCb(\nu_n)\asto\DCb(\mu)$.
\end{proof}

Our proofs of \refTs{T4} and \ref{Tcons} give no information on the rate of
convergence, leading to the following problems.

\begin{problem}\label{PT4}
  What is the rate of convergence in \eqref{t4},
under suitable hypotheses on $(\bX_n,\bY_n)$?
\end{problem}

\begin{problem}\label{Pcons}
  What is the rate of convergence in \eqref{tcons},
under suitable hypotheses on $(\bX,\bY)$?
\end{problem}

\section{Hilbert spaces, preliminaries}\label{SH1}

In this and the next two sections
we assume that $\cX$ and $\cY$ are separable Hilbert spaces;
we therefore change notation and
write $\cX=\HX$ and $\cY=\HY$.

We give our extension of \refD{D4} of covariance distance in \refS{SD5},
but we first need some preliminaries.

\subsection{Characteristic random variables}\label{Sch}

Let $\cH$ be a separable Hilbert space, of finite or infinite dimension
$\dimH$. 

Fix an ON-basis $(\be_i)_1^{\dim \cH}$ in $\cH$, 
and let
$\xi_i$, $i=1,2,\dots$, be \iid{} $N(0,1)$ random variables.
Let $\bxi:=(\xi_i)_1^{\dimH}$, a
random vector of length $\dimH$ (finite or infinite).
Define for any
$\bx\in \cH$,
\begin{align}\label{cdot}
\bxi\cdot \bx=  \bx\cdot\bxi := \sum_{i=1}^{\dim \cH} \innprod{\bx,\be_i}\xi_i.
\end{align}
Note that in the finite-dimensional case, $\bxi\in \cH$ and this is the
usual inner product. In the infinite-dimensional case $\bxi\notin \cH$ a.s., 
but the sum in \eqref{cdot} converges \as{} since
$\sum_i|\innprod{\bx,\be_i}|^2=\norm{\bx}^2<\infty$. Hence, $\bxi\cdot \bx$ is
defined \as{} in any case. 
Note also that $\bxi\cdot\bx$ is a real-valued random variable, and that
\begin{align}
  \label{norma}
\bxi\cdot\bx\sim N\bigpar{0,\norm{\bx}^2}.
\end{align}

Let $\bX$ be an  $\cH$-valued random variable, and assume that $\bxi$ is
independent of $\bX$.
Then $\bxi\cdot\bX$ exists \as{}; 
thus $\bxi\cdot\bX$ is a well-defined real-valued random variable.
Consider the conditional expectation
\begin{align}\label{PhiX}
  \PhiX(\bxi):=\E\bigpar{e^{\ii\bxi\cdot\bX}\bigmid\bxi}.
\end{align}
This is a complex-valued random variable (determined \as),
which can be written as a (deterministic) function of $\bxi$.

In the finite-dimensional case $\dimH<\infty$, 
we may identify $\cH$ with $\bbR^q$, with
$(\be_j)_1^q$ as the standard basis.
Then \eqref{cdot} and \eqref{PhiX} show that 
\begin{align}\label{Phiphi}
\PhiX(\bxi)=\phiX(\bxi) \quad\text{a.s.},
\end{align}
where $\phiX(\bt):=\E e^{\ii \bt\cdot\bX}$ is the usual \chf.
For this reason, we say, for a general Hilbert space
$\cH$, that $\PhiX(\bxi)$ is the
\emph{characteristic random variable} of $\bX$.

Note that $\PhiX(\bxi)$ is a complex random variable, with 
\begin{align}
  \bigabs{\PhiX(\bxi)}\le1\quad\text{a.s.}
\end{align}
$\PhiX(\bxi)$
depends on the choices of $(\be_j)_j$ and $(\xi_j)_j$, but these choices
are regarded as fixed.
Moreover, the following theorem says that $\PhiX(\bxi)$
has the same fundamental property as the usual characteristic function:
it depends on $\bX$ only through its distribution, and conversely, it
characterizes the distribution.

\begin{theorem}
  \label{TPhi}
Let $\cH$ be a separable Hilbert space, and let $\bX$ and $\bY$ be
$\cH$-valued random variables.
Fix as above an ON-basis $(\be_i)_1^{\dim \cH}$ in $\cH$, 
and a random vector 
$\bxi:=(\xi_i)_1$
of \iid{} standard normal random variables
$\xi_i$, $i=1,2,\dots$, and assume further that these are independent of
$\bX$ and $\bY$.
Then
\begin{align}
  \bX\eqd \bY
\iff
\PhiX(\bxi)=\PhiY(\bxi)
\quad \text{a.s.}
\end{align}
\end{theorem}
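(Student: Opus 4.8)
The plan is to prove the equivalence $\bX\eqd\bY\iff\PhiX(\bxi)=\PhiY(\bxi)$ a.s. The forward direction is easy: if $\bX\eqd\bY$, then since $\bxi$ is independent of both, the conditional expectations $\PhiX(\bxi)=\E(e^{\ii\bxi\cdot\bX}\mid\bxi)$ and $\PhiY(\bxi)=\E(e^{\ii\bxi\cdot\bY}\mid\bxi)$ are computed by the same deterministic function of $\bxi$ (both equal $\int_{\cH}e^{\ii\bxi\cdot\bx}\dd\mu(\bx)$ where $\mu$ is the common law), so they agree a.s. The substance is the converse, and the strategy is to show that the family of random variables $\set{\bxi\cdot\bx}$, or rather the conditional characteristic function, determines the law of $\bX$.

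First I would reduce to finite-dimensional marginals. Fix the ON-basis $(\be_i)$ and, for each $N$, consider the orthogonal projection $P_N\bX$ onto $\mathrm{span}(\be_1,\dots,\be_N)\cong\bbR^N$. The point is that $\bxi\cdot\bX=\sum_i\innprod{\bX,\be_i}\xi_i$ and, conditioning on $\bxi$, the value $\PhiX(\bxi)$ evaluated along vectors $\bxi$ supported on the first $N$ coordinates recovers the ordinary characteristic function of the $\bbR^N$-valued vector $(\innprod{\bX,\be_1},\dots,\innprod{\bX,\be_N})$. Concretely, by \eqref{Phiphi} the restriction of $\PhiX$ to such $\bxi$ is a.s.\ equal to the genuine characteristic function $\phi_{P_N\bX}$ on $\bbR^N$. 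The hypothesis $\PhiX(\bxi)=\PhiY(\bxi)$ a.s.\ then forces $\phi_{P_N\bX}=\phi_{P_N\bY}$ as functions on $\bbR^N$, whence $P_N\bX\eqd P_N\bY$ by uniqueness of characteristic functions in $\bbR^N$. Since this holds for every $N$, the two random variables $\bX$ and $\bY$ have the same finite-dimensional distributions of coordinates, and because $\cH$ is separable with the cylinder $\sigma$-field generated by these coordinate functionals coinciding with the Borel $\sigma$-field, a standard argument (the laws agree on a generating $\pi$-system of cylinder sets) gives $\bX\eqd\bY$.

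The main obstacle is the measure-theoretic bookkeeping in passing from ``$\PhiX(\bxi)=\PhiY(\bxi)$ for a.e.\ single random $\bxi$'' to ``$\phi_{P_N\bX}(\bt)=\phi_{P_N\bY}(\bt)$ for \emph{all} deterministic $\bt\in\bbR^N$.'' The hypothesis only gives equality of two random variables, i.e.\ equality of the two deterministic functions $\bxi\mapsto\PhiX(\bxi)$ and $\bxi\mapsto\PhiY(\bxi)$ off a null set in the distribution of $\bxi$; but the Gaussian law of $\bxi$ has full topological support on the relevant coordinate subspaces, and both functions are continuous in $\bt$ (by dominated convergence, since the integrands are bounded by $1$). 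Hence equality a.e.\ on a set of full support, combined with continuity, upgrades to equality everywhere. I would carry out this continuity-plus-full-support argument carefully, as it is the one genuinely delicate step; the rest is the routine reduction to $\bbR^N$ via \eqref{Phiphi} and the classical uniqueness theorem for characteristic functions.
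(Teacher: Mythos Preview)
There is a genuine gap in the infinite-dimensional case, precisely at the step you flag as delicate. You write that ``by \eqref{Phiphi} the restriction of $\PhiX$ to $\bxi$ supported on the first $N$ coordinates is a.s.\ equal to $\phi_{P_N\bX}$,'' and then propose to upgrade via continuity-plus-full-support. But the event $\{\xi_i=0\text{ for all }i>N\}$ has probability zero under the Gaussian law of $\bxi$, so the hypothesis $\PhiX(\bxi)=\PhiY(\bxi)$ a.s.\ says nothing about that set directly. Your continuity rescue does not work either: the map $\bxi\mapsto\PhiX(\bxi)$, viewed as a function on $\bbR^\infty$, is \emph{not} continuous in the product topology. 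For fixed $\bx\in\cH$ with infinitely many nonzero coordinates, the linear functional $\bxi\mapsto\bxi\cdot\bx=\sum_i\innprod{\bx,\be_i}\xi_i$ is not product-continuous (continuous linear functionals on $\bbR^\infty$ are exactly the finitely supported ones), and integrating in $\bx$ gives no reason for continuity to appear. The dominated convergence you invoke does give continuity of $\phi_{P_N\bX}$ in $\bt\in\bbR^N$, but that is continuity of the wrong object; you never establish that $\PhiX(\bxi)$ and $\phi_{P_N\bX}(\xi_1,\dots,\xi_N)$ are close for typical $\bxi$.

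The paper closes this gap by a different mechanism. It decomposes $\bxi\cdot\bX=\bxi_N\cdot\XN+\bxi_{>N}\cdot\bX_{>N}$ and bounds $|\PhiX(\bxi)-\Phi_{\XN}(\bxi_N)|$ by a quantity depending only on the tail $\bxi_{>N}$. A preparatory lemma (\refL{Leps}) shows this tail bound is below any given $\eps$ with positive probability. Since $|\Phi_{\XN}(\bxi_N)-\Phi_{\YN}(\bxi_N)|$ is a function of $\bxi_N$ alone, which is \emph{independent} of $\bxi_{>N}$, one concludes that $|\Phi_{\XN}(\bxi_N)-\Phi_{\YN}(\bxi_N)|<4\eps$ a.s., hence $\Phi_{\XN}(\bxi_N)=\Phi_{\YN}(\bxi_N)$ a.s. At that point your continuity-plus-full-support argument \emph{does} apply, now legitimately in $\bbR^N$, giving $\phi_{\XN}=\phi_{\YN}$ everywhere and thus $\XN\eqd\YN$. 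The idea your proposal is missing is this head/tail independence trick, which substitutes for the unavailable global continuity.
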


We prove first a lemma that will help to reduce to the finite-dimensional case.

\begin{lemma}\label{Leps}
Let  $\bX$ be an $\cH$-valued  random variable
and let $\bxi=(\xi_i)_i$ be as  above, and in particular independent of $\bX$.
Then, for any $\eps>0$,
  the event
  $\bigset{\E\bigpar{1\land \abs{\bxi\cdot\bX}\bigmid\bxi}<\eps}$
has positive probability.

More generally, for any finite set of random variables $\bX^{(1)},\dots,\bX^{(m)}$
in $\HX$, all independent of $\bxi$, the events
  $\bigset{\E\bigpar{1\land \abs{\bxi\cdot\bX^{(j)}}\bigmid\bxi}<\eps}$
hold simultaneously with positive probability.
\end{lemma}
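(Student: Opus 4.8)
The plan is to reduce the statement to a deterministic one about the function $\Psi(\bu):=\E\bigpar{1\land\abs{\bu\cdot\bX}}$, where the expectation is taken over $\bX$ only; this is legitimate since $\bX$ is independent of $\bxi$, and it gives $\E\bigpar{1\land\abs{\bxi\cdot\bX}\mid\bxi}=\Psi(\bxi)$ a.s. So the goal becomes: $\Psi<\eps$ on a set of positive measure for the law $\gamma:=\cL(\bxi)$. A first instinct is to scale $\bxi$ by a small factor $t$: since $1\land\abs{t\,\bxi\cdot\bX}\to0$ pointwise as $t\to0$, dominated convergence makes $\E\bigsqpar{1\land\abs{t\,\bxi\cdot\bX}}$ small, and Markov's inequality then forces $\Psi$ to be small on much of the range of $t\bxi$. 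The main obstacle is that in infinite dimensions this does not transfer back: the law of $t\bxi$ is the product $\bigotimes_i N(0,t^2)$, which for $t\neq1$ is mutually singular with $\gamma=\bigotimes_i N(0,1)$ (Kakutani's dichotomy), so positivity of measure for $t\bxi$ says nothing about $\gamma$.

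I would resolve this by scaling only finitely many coordinates. Fix $N$ and $t\in(0,1]$ and set $\bxi_t:=(t\xi_1,\dots,t\xi_N,\xi_{N+1},\xi_{N+2},\dots)$, with law $\gamma_t$. Since $\gamma_t$ and $\gamma$ differ only in their first $N$ one-dimensional factors, where $N(0,t^2)$ and $N(0,1)$ are mutually absolutely continuous on $\bbR$, the product measures $\gamma_t$ and $\gamma$ are mutually absolutely continuous, and in particular have the same null sets. Thus it suffices to find $N$ and $t$ with $\gamma_t\bigpar{\set{\Psi<\eps}}>0$.

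To estimate $\int\Psi\,d\gamma_t=\E\bigsqpar{1\land\abs{\bxi_t\cdot\bX}}$ I would split $\bxi_t\cdot\bX=t S_N+T_N$ with $S_N:=\sum_{i\le N}\innprod{\bX,\be_i}\xi_i$ and $T_N:=\sum_{i>N}\innprod{\bX,\be_i}\xi_i$, and use $1\land\abs{a+b}\le(1\land\abs a)+(1\land\abs b)$. Because the series defining $\bxi\cdot\bX$ converges a.s., its tail satisfies $T_N\to0$ a.s., so $\E\bigsqpar{1\land\abs{T_N}}\to0$ as \ntoo{} by dominated convergence; fix $N$ making this $<\eps/4$. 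For that fixed $N$, $1\land\abs{tS_N}\to0$ a.s. as $t\to0$, so $\E\bigsqpar{1\land\abs{tS_N}}<\eps/4$ for small $t$. Hence $\int\Psi\,d\gamma_t<\eps/2$, and Markov's inequality gives $\gamma_t\bigpar{\set{\Psi\ge\eps}}\le\tfrac12$, so $\gamma_t\bigpar{\set{\Psi<\eps}}\ge\tfrac12>0$; by the equivalence of $\gamma_t$ and $\gamma$ this yields $\P\bigpar{\Psi(\bxi)<\eps}>0$, which is the first assertion.

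For the general statement with $\bX^{(1)},\dots,\bX^{(m)}$ I would run the identical argument on $\sum_{j=1}^m\Psi_j$, where $\Psi_j(\bu):=\E\bigpar{1\land\abs{\bu\cdot\bX^{(j)}}}$. Using a common $N$ and common $t$ (possible since there are finitely many indices $j$), the same two limits give $\sum_j\int\Psi_j\,d\gamma_t<\eps/2$, whence $\gamma_t\bigpar{\set{\sum_j\Psi_j<\eps}}\ge\tfrac12$ and therefore $\gamma\bigpar{\set{\sum_j\Psi_j<\eps}}>0$. Since the $\Psi_j$ are nonnegative, on this event each $\Psi_j<\eps$, so all the events $\set{\E(1\land\abs{\bxi\cdot\bX^{(j)}}\mid\bxi)<\eps}$ hold simultaneously with positive probability. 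The only delicate point throughout is the finite-coordinate scaling that keeps the Gaussian law equivalent; everything else is dominated convergence and Markov's inequality.
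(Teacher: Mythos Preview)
Your proof is correct. Both your argument and the paper's share the same skeleton: split $\bxi\cdot\bX$ into a head $S_N=\sum_{i\le N}\innprod{\bX,\be_i}\xi_i$ and a tail $T_N$, use dominated convergence to make $\E[1\land|T_N|]$ small for large $N$, and then force the head contribution to be small. The difference lies in the last step. The paper restricts each $\xi_i$ ($i\le N$) to a small interval $\{|\xi_i|<\gd_i\}$ and then observes that this event is \emph{independent} of the tail event $\{W_N<\eps/2\}$ (since they live on disjoint coordinate blocks), so their intersection has positive probability. You instead scale the first $N$ coordinates by $t$, bound the \emph{integral} $\int\Psi\,d\gamma_t$ via Markov, and then transfer positivity back to $\gamma$ using the mutual absolute continuity of $\gamma_t$ and $\gamma$ (which holds precisely because only finitely many Gaussian factors are altered). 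Your route is slightly slicker in that it treats the head as a single block and avoids the coordinate-by-coordinate choice of $\gd_i$; the price is that you invoke the equivalence of the two product Gaussians, whereas the paper's argument is entirely self-contained. One small point: your $\Psi(\bu)=\E(1\land|\bu\cdot\bX|)$ is only defined for $\gamma$-a.e.\ $\bu$, but since $\gamma_t\ll\gamma$ this causes no trouble, and the identification $\E(1\land|\bxi\cdot\bX|\mid\bxi)=\Psi(\bxi)$ a.s.\ follows from independence and Fubini as you say.
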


\begin{proof}
 For finite $N\le\dimH$, let
$\Pi_N$ be the orthogonal projection of $\HX$ onto the subspace $\cH_N$
  spanned by $\be_1,\dots,\be_N$.
Let $\XN:=\Pi_N\bX$ and
  $\bX_{>N}:=\bX-\XN$, and define $\bxi_{N}:=(\xi_1,\dots,\xi_N)$ and
  $\bxi_{>N}:=(\xi_{N+1},\xi_{N+2},\dots)$.
  Then we can write, 
interpreting the dot products in the obvious way 
in analogy with \eqref{cdot},
  \begin{align}\label{ea}
  \bxi\cdot\bX=
  \bxi_{N}\cdot\XN+\bxi_{>N}\cdot\bX_{>N} . 
  \end{align}

Assume in the remainder of the proof that 
$\dimH=\infty$; the case $\dimH<\infty$ is similar but
simpler, taking $N:=\dimH$ below so $\bX_{>N}=0$.

Since the sum in \eqref{cdot} converges \as, and $\bxi_{>N}\cdot\bX_{>N}$ is
the tail of this sum, it follows that 
$\bxi_{>N}\cdot\bX_{>N}\asto0$ 
as $\Ntoo$.
Consequently, by dominated convergence,
\begin{align}\label{gru}
  \E \bigpar{1\land \abs{\bxi_{>N}\cdot\bX_{>N}}}\to0
\qquad\text{as \Ntoo}.
\end{align}
Let 
\begin{align}
  \label{WN}
W_N:=  \E \bigpar{1\land \abs{\bxi_{>N}\cdot\bX_{>N}}\bigmid\bxi}.
\end{align}
Then \eqref{gru} shows $\E W_N\to0$; hence we may choose $N<\infty$ such
that
$\E W_N<\eps/4$.
Then Markov's inequality yields
\begin{align}\label{kak}
  \P\bigpar{{W_N<\eps/2}}
\ge 1- \frac{\E W_N}{\eps/2}>\frac12.
\end{align}

Moreover, 
for each $i\le N$,
again by dominated convergence, 
\begin{align}
  \E \bigpar{1\land \abs{s\innprod{\bX,\be_i}}}\to0
\qquad
\text{as $s\to0$},
\end{align}
and thus there exists $\gd_i>0$ such that if $|s|<\gd_i$, then
\begin{align}\label{q10}
  \E \bigpar{1\land \abs{s\innprod{\bX,\be_i}}}<\frac{\eps}{2N}.
\end{align}
Recalling \eqref{ea} and \eqref{cdot},
we see that
\begin{align}
 \abs{\bxi\cdot\bX}
\le \sum_{i=1}^N  \abs{\xi_i\innprod{\bX,\be_i}}
+ \abs{\bxi_{>N}\cdot\bX_{>N}}
\end{align}
and thus
\begin{align}
1\land \abs{\bxi\cdot\bX}
\le \sum_{i=1}^N \bigpar{1\land \abs{\xi_i\innprod{\bX,\be_i}}}
+ \bigpar{1\land \abs{\bxi_{>N}\cdot\bX_{>N}}}.
\end{align}
Hence, recalling \eqref{WN},
\begin{align}
\E\bigpar{1\land \abs{\bxi\cdot\bX}\bigmid\bxi}
\le \sum_{i=1}^N \E\bigpar{1\land \abs{\xi_i\innprod{\bX,\be_i}}\bigmid\xi_i}
+ W_N. 
\end{align}
Consequently, if $\bxi$ is such that 
${W_N<\eps/2}$ and $|\xi_i|<\gd_i$ for
$i=1,\dots,N$, then \eqref{q10} implies
\begin{align}\label{finis}
\E\bigpar{1\land \abs{\bxi\cdot\bX}\bigmid\bxi}
< \sum_{i=1}^N \frac{\eps}{2N}
+\frac{ \eps}2
=\eps.
\end{align}
Since the events
$\set{W_N<\eps/2}$ and $\set{|\xi_i|<\gd_i}$ are independent and 
each has positive probability, 
they occur together with positive probability, 
and thus \eqref{finis} holds with positive probability.

This proves the first part of the lemma.
The second is proved in the same way, choosing $N$ so large 
that \eqref{kak} holds with $W_N$ replaced by $\sum_{j=1}^mW_N^{(j)}$,
where $W_N^{(j)}$ is defined by \eqref{WN} but using $\bX^{(j)}$ instead of
$\bX$,
and then choosing  $\gd_i$ so small that
\eqref{q10} holds for each $\bX^{(j)}$
\end{proof}

\begin{proof}[Proof of \refT{TPhi}]
\pfitemx{$\implies$}
If $\bX\eqd\bY$, then
$(\bX,\bxi)\eqd(\bY,\bxi)$ and \eqref{cdot} implies 
$(\bxi\cdot\bX,\bxi)\eqd(\bxi\cdot\bY,\bxi)$
which by \eqref{PhiX} implies
$\PhiX(\bxi)=\PhiY(\bxi)$ 
a.s.

\pfitemx{$\impliedby$}  
We let $N\le\dimH$ be finite and
use the notation in the proof of \refL{Leps}.
Then \eqref{ea} holds,
  and thus
  \begin{align}
    \bigabs{ e^{\ii \bxi\cdot\bX}-e^{\ii \bxi_{N}\cdot\XN}}
    =     \bigabs{ e^{\ii  \bxi_{>N}\cdot\bX_{>N}}-1}
    \le 2\land |\bxi_{>N}\cdot\bX_{>N}|
.
  \end{align}
  Hence,
  \begin{align}\label{qc}
    \bigabs{\E\bigpar{ e^{\ii  \bxi\cdot\bX}\bigmid\bxi}
    -\E\bigpar{e^{\ii  \bxi_{N}\cdot\XN}\bigmid\bxi}}
    \le \E\bigpar{2\land |\bxi_{>N}\cdot\bX_{>N}|\bigmid\bxi}
\textas
  \end{align}
Using \eqref{PhiX},
\eqref{qc} can be written,
since $\bxi_{N}$ and $\bxi_{>N}$ are independent, 
  \begin{align}\label{qca}
\bigabs{\Phi_{\bX}(\bxi)-\Phi_{\XN}(\bxi_N)}
    \le \E\bigpar{2\land |\bxi_{>N}\cdot\bX_{>N}|\bigmid\bxi_{>N}}
\textas
  \end{align}
Similarly, with analoguous notation,
  \begin{align}\label{qcy}
\bigabs{\Phi_{\bY}(\bxi)-\Phi_{\YN}(\bxi_N)}
    \le \E\bigpar{2\land |\bxi_{>N}\cdot\bY_{>N}|\bigmid\bxi_{>N}}
\textas  
  \end{align}
The assumption $\PhiX(\bxi)=\Phi_{\bY}(\bxi)$ \as{} thus implies
  \begin{multline}\label{qcc}
\bigabs{\Phi_{\XN}(\bxi_N)-\Phi_{\YN}(\bxi_N)}
\\
    \le \E\bigpar{2\land |\bxi_{>N}\cdot\bX_{>N}|\bigmid\bxi_{>N}}
+\E\bigpar{2\land |\bxi_{>N}\cdot\bY_{>N}|\bigmid\bxi_{>N}}
\textas  
\end{multline}
\refL{Leps} (applied to $\bX_{>N}$ and $\bY_{>N}$) 
implies that
for any $\eps>0$, the \rhs{} of \eqref{qcc} is less than $4\eps$ with
positive probability.
Furthermore,
the \lhs{} of \eqref{qcc} is a function of $\bxi_{N}$, and the \rhs{} is a
function of $\bxi_{>N}$; thus the two sides are independent.
Consequently, \eqref{qcc} implies
\begin{align}\label{euro}
  \bigabs{\Phi_{\XN}(\bxi_N)-\Phi_{\YN}(\bxi_N)} <4\eps
\textas
\end{align}
Since $\eps$ is arbitrary, this shows
\begin{align}\label{rur}
\Phi_{\XN}(\bxi_N)=\Phi_{\YN}(\bxi_N)
\textas
\end{align}
Since $\XN$ and $\YN$ live in the finite-dimensional space 
$\cH_N$,
\eqref{Phiphi} applies and shows
\begin{align}\label{wuw}
\varphi_{\XN}(\bxi_N)=
\Phi_{\XN}(\bxi_N)=\Phi_{\YN}(\bxi_N)
=\varphi_{\YN}(\bxi_N)
\textas,
\end{align}
where $\varphi_{\XN}(\bt)$ and $\varphi_{\YN}(\bt)$ are the ordinary
characteristic functions in $\bbR^N$ (identified with $\cH_N$).
Hence,
\begin{align}\label{quq}
\varphi_{\XN}(\bt)
=\varphi_{\YN}(\bt)
\end{align}
for \aex{} $\bt\in\bbR^N$, and since characteristic functions are
continuous, \eqref{quq} holds for all $\bt\in\bbR^N$, and thus
\begin{align}\label{eqN}
  \XN\eqd \YN.
\end{align}

If $\dimH<\infty$, we may choose $N=\dimH$ and the result $\bX\eqd \bY$
follows. 
(Much of the argument above is not needed  in this case.)

If $\dimH=\infty$, then \eqref{eqN} holds for every finite $N$.
Furthermore, as \Ntoo, we have $\XN\asto\bX$ and thus $\XN\dto\bX$
and similarly $\YN\dto\bY$.
Consequently,
$\bX\eqd\bY$, which completes the proof.
\end{proof}

\begin{remark}\label{Rito}
  The mapping $\bx\mapsto\bxi\cdot\bx$ is an isometry of $\HX$ onto the
  Gaussian Hilbert space spanned by the random variables $\xi_i$, and it can
  be regarded as an abstract stochastic integral,
  \cf{} \cite[Chapter VII.2]{SJIII}.
It replaces the It\^o integrals used in 
\cite{Mikosch}.
\end{remark}

\begin{remark}\label{Rhphi}
The arguments above are related to the proof of \cite[Theorem 3.16]{Lyons}.
 We sketch the connection:
That proof uses an embedding $\phi$
of the Hilbert space into $L^2(\bbR^\infty\times\bbR)$; if we compose 
$\phi$ 
with the Fourier transform $f\mapsto\int e^{2\pi\ii tx}f(x)\dd x$
acting on the last variable (which is an isometry), 
we obtain
an equivalent embedding $\hphi$, which in our notation equals 
\begin{align}
\hphi:  \bx\to \frac{\ii}{2\pi t}\bigpar{e^{\ii c' t \bxi\cdot\bx}-1}
\in L^2(\P\times \ddx t)
\end{align}
for a constant $c'>0$. Hence, if $\mu=\cL(\bX)$, the distribution of $\bX$,
then, combining the notation of \cite{Lyons} and ours,
\begin{align}
  \beta_{\hphi}(\mu):=\E\bigpar{ \phi'(X)\mid\bxi}
= \frac{\ii}{2\pi t}\bigpar{\Phi_{c' t\bX}(\xi )-1}.
\end{align}
Hence, the result in \cite[Theorem 3.16]{Lyons} 
that $\beta_{\phi}(\mu)$ characterises $\mu$ is closely related to,
and follows from, \refT{TPhi}. 
Furthermore, the two proofs are similar; both are
based on approximating with the finite-dimensional case which is easy.
\end{remark}

\subsection{Independence and characteristic random variables}
\label{SSind}
Now consider a pair of random variables $(\bX,\bY)$ taking values in two,
possibly different, separable Hilbert spaces $\HX$ and $\HY$.
Fix, as above, 
an ON-basis $(\be_i)_1^{\dim \cH}$ in $\cH$, 
and \iid{} $N(0,1)$ random variables
$\xi_i$, $i=1,2,\dots$.
Similarly, fix
an ON-basis $(\bex_j)_1^{\dim \HY}$ in $\HY$, 
and \iid{} $N(0,1)$ random variables
$\eta_j$, $j=1,2,\dots$.
Assume that all $\xi_i$ and $\eta_j$ are independent of each other and of
$(\bX,\bY)$.

Then $(\bX,\bY)$ is a random variable in the Hilbert space
$\HX\oplus\HY=\HX\times\HY$, and
$\be_1,\bex_1,\be_2,\bex_2,\dots$ is an ON-basis in this space.
Let $\bxi=(\xi_i)_1^{\dim\HX}$,
 $\bbeta:=(\eta_i)_1^{\dim\HY}$,
and 
 $\bzeta:=(\xi_1,\eta_1,\xi_2,\eta_2,\dots)$.
\begin{theorem}\label{Tind}
Let $(\bX,\bY)$ be a pair of random variables taking values in separable
Hilbert spaces $\HX$ and $\HY$.
Then, with notation as above,
$\bX$ and $\bY$ are independent if and only if
\begin{align}\label{tind}
  \E\bigpar{e^{\ii \bxi\cdot\bX+\ii\bbeta\cdot\bY}\bigmid\bxi,\bbeta}
=
  \E\bigpar{e^{\ii \bxi\cdot\bX}\bigmid\bxi}
  \E\bigpar{e^{\ii\bbeta\cdot\bY}\bigmid\bbeta}
\textas
\end{align}
\end{theorem}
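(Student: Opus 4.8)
The plan is to reduce the statement to \refT{TPhi} by viewing the pair as a single random variable $\bZ:=(\bX,\bY)$ in the product Hilbert space $\HX\oplus\HY$. First I would observe that $\be_1,\bex_1,\be_2,\bex_2,\dots$ is an ON-basis of $\HX\oplus\HY$ and that $\bzeta=(\xi_1,\eta_1,\xi_2,\eta_2,\dots)$ is the corresponding \iid{} $N(0,1)$ sequence; since the defining sum \eqref{cdot} splits along the two orthogonal summands, we get $\bzeta\cdot\bZ=\bxi\cdot\bX+\bbeta\cdot\bY$ a.s., and conditioning on $\bzeta$ is the same as conditioning on the pair $(\bxi,\bbeta)$. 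Hence, by \eqref{PhiX},
\[
\Phi_{\bZ}(\bzeta)=\E\bigpar{e^{\ii\bxi\cdot\bX+\ii\bbeta\cdot\bY}\bigmid\bxi,\bbeta},
\]
which is exactly the \lhs{} of \eqref{tind}.

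For the direction $\implies$ I would argue as follows. If $\bX$ and $\bY$ are independent, then, since $(\bxi,\bbeta)$ is independent of $(\bX,\bY)$, the variables $\bX$ and $\bY$ remain conditionally independent given $(\bxi,\bbeta)$; consequently $e^{\ii\bxi\cdot\bX}$ and $e^{\ii\bbeta\cdot\bY}$ are conditionally independent given $(\bxi,\bbeta)$ and the conditional expectation of their product factorizes. Dropping the redundant conditioning (using that $\bX$ is independent of $\bbeta$ and $\bY$ of $\bxi$) turns the two factors into $\PhiX(\bxi)$ and $\PhiY(\bbeta)$, i.e.\ the \rhs{} of \eqref{tind}.

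The substantive direction is $\impliedby$. Here I would introduce the \emph{product version} $\bZ':=(\bX',\bY')$, where $\bX'\eqd\bX$, $\bY'\eqd\bY$, with $\bX'$ and $\bY'$ independent (and all independent of the Gaussians). Applying the easy direction just proved to $\bZ'$, together with the fact from \refT{TPhi} that $\bX'\eqd\bX$ forces $\Phi_{\bX'}(\bxi)=\PhiX(\bxi)$ a.s.\ (and likewise for $\bY'$), gives
\[
\Phi_{\bZ'}(\bzeta)=\PhiX(\bxi)\PhiY(\bbeta)\textas
\]
But the hypothesis \eqref{tind} says precisely that $\Phi_{\bZ}(\bzeta)$ equals this same product a.s. Thus $\Phi_{\bZ}(\bzeta)=\Phi_{\bZ'}(\bzeta)$ a.s., and \refT{TPhi}, applied in the separable Hilbert space $\HX\oplus\HY$ with the ON-basis and Gaussian vector $\bzeta$ above, yields $\bZ\eqd\bZ'$, that is $(\bX,\bY)\eqd(\bX',\bY')$. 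Since $\bX'$ and $\bY'$ are independent with the same marginals as $\bX$ and $\bY$, this is exactly the independence of $\bX$ and $\bY$.

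I expect the only real work to be the bookkeeping in the reduction rather than any deep difficulty: one must verify that the interleaved pair $\bigpar{(\be_i,\bex_i),\,\bzeta}$ is a legitimate ON-basis/Gaussian choice for $\HX\oplus\HY$ and that $\bzeta\cdot\bZ=\bxi\cdot\bX+\bbeta\cdot\bY$ with the almost-sure convergence inherited from \eqref{cdot}. Once this identification is in place, the theorem follows as a direct corollary of \refT{TPhi} applied to $\bZ$ and to the auxiliary product variable $\bZ'$.
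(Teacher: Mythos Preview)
Your proof is correct and follows essentially the same approach as the paper: both reduce to \refT{TPhi} applied in $\HX\oplus\HY$ via the identity $\bzeta\cdot(\bX,\bY)=\bxi\cdot\bX+\bbeta\cdot\bY$, comparing $(\bX,\bY)$ with an independent-product pair. The only cosmetic difference is that the paper keeps the original $\bX$ and introduces just an independent copy $\bY'$ of $\bY$, whereas you introduce fresh copies of both; this changes nothing.
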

\begin{proof}
  Let $\bY'$ be a copy of $\bY$, independent of $\bX,\bxi,\bbeta$.
Then, $\bX$ and $\bY$ are independent if and only if
$(\bX,\bY)\eqd(\bX,\bY')$, and the result follows from 
\refT{TPhi}, applied to the Hilbert space $\HX\times\HY$, noting that
with the bases and Gaussian variables above,
$
 \bzeta\cdot(\bX,\bY)=
 \bxi\cdot\bX+\bbeta\cdot\bY
$ \as,
and thus
\begin{align}\label{tindra}
  \Phi_{(\bX,\bY)}(\bzeta)
= \E\bigpar{e^{\ii \bzeta\cdot(\bX,\bY)}\bigmid\bxi,\bbeta}
= \E\bigpar{e^{\ii \bxi\cdot\bX+\ii\bbeta\cdot\bY}\bigmid\bxi,\bbeta},
\end{align}
while, by independence and $\bY\eqd\bY'$,
\begin{align}
  \Phi_{(\bX,\bY')}(\bzeta)
&= \E\bigpar{e^{\ii \bxi\cdot\bX+\ii\bbeta\cdot\bY'}\bigmid\bxi,\bbeta}
=
  \E\bigpar{e^{\ii \bxi\cdot\bX}\bigmid\bxi}
  \E\bigpar{e^{\ii\bbeta\cdot\bY'}\bigmid\bbeta}
\notag\\&=
  \E\bigpar{e^{\ii \bxi\cdot\bX}\bigmid\bxi}
  \E\bigpar{e^{\ii\bbeta\cdot\bY}\bigmid\bbeta}
\textas\label{blanka}
\end{align}
\end{proof}

Note that, by \eqref{Cov},  \eqref{tind} may be written
\begin{align}\label{olivia}
  \Cov\Bigpar{e^{\ii \bxi\cdot\bX}, e^{\ii \bbeta\cdot\bY}\bigmid\bxi,\bbeta}=0\textas
\end{align}

\section{Covariance distance in Hilbert space}\label{SD5}

We give a new definition of covariance distance for Hilbert spaces;
it can be seen as a version of
\refD{D4} for Euclidean spaces, where we
replace the \chf{s} there by the characteristic random variables
defined in \refS{SH1}, which makes the extension to 
infinite-dimensional Hilbert spaces possible.
(The definition is inspired by \cite[Lemma 4.1]{Mikosch}; see \refR{Rito}.)

 Define, for $0<\gb<2$, 
\begin{align}\label{cb}
  \cb:=\frac{2^{1+\gb/2}}{-\gG(-\gb/2)}
=\frac{\gb 2^{\gb/2}}{\gG(1-\gb/2)}.
\end{align}
\begin{definition}\label{D5}
Let\/ $(\bX,\bY)$ be a pair of random vectors in separable Hilbert spaces,
and let $0<\gb<2$.
Then, with notation as in \refS{SH1},
  \begin{align}
&  \DCb(\bX,\bY)
=   \HDCb(\bX,\bY)
\notag\\&\quad
:=
\cb^2\intoo\intoo
\E\bigabs{\Phi_{(r\bX,s\bY)}(\bxi,\bbeta)-
\Phi_{r\bX}(\bxi)\Phi_{s \bY}(\bbeta)}^2
    \frac{\ddx r\dd s}{r^{\gb+1} s^{\gb+1}}
\label{d5a}
\\&\quad
\phantom:
=
\cb^2\intoo\intoo
\E \Bigabs{ \E \bigpar{e^{\ii r \bxi\cdot\bX+\ii s\bbeta\cdot\bY}\mid\bxi,\bbeta}
         - \E \bigpar{e^{\ii r \bxi\cdot\bX}\mid\bxi}
                      \E \bigpar{e^{\ii s\bbeta\cdot\bY}\mid\bbeta}}^2
\notag\\&\hskip24em
    \frac{\ddx r\dd s}{r^{\gb+1} s^{\gb+1}}
\label{d5b}
\\&\quad
\phantom:
=
   \cb^2\intoo\intoo
\E \bigabs{\Cov\Bigpar{e^{\ii r \bxi\cdot\bX}, e^{\ii s\bbeta\cdot\bY}\mid\bxi,\bbeta}}^2
    \frac{\ddx r\dd s}{r^{\gb+1} s^{\gb+1}}.
\label{d5c}
  \end{align}
\end{definition}

The expressions \eqref{d5a}--\eqref{d5c} are equal by the definitions 
\eqref{PhiX} and \eqref{Cov} above, \cf{} \eqref{tindra} and \eqref{olivia}.
Note that no moment assumtions are made; as for \refD{D4},
the definition works for any $(\bX,\bY)$ in these spaces, 
but $\DCb(\bX,\bY)$ may be infinite.
Furthermore, as shown in the next theorem, for the special case of Euclidean
spaces, \refD{D5} agrees with \refD{D4}, again without moment conditions.

\begin{theorem}\label{TEH}
Let $0<\gb<2$.
If\/ $(\bX,\bY)$ is a pair of random vectors in Euclidean spaces $\bbR^p$ and
$\bbR^q$, then \refDs{D4} and \ref{D5} agree,
\ie, $\EDCb(\bX,\bY)=\HDCb(\bX,\bY)$.
\end{theorem}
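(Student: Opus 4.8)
The plan is to evaluate \refD{D5} in the Euclidean case by replacing the characteristic random variables with ordinary characteristic functions and then recognizing the $(\bxi,r)$- and $(\bbeta,s)$-integrations as a polar decomposition of the weighted Lebesgue integral in \refD{D4}. Since $\HX=\bbR^p$ and $\HY=\bbR^q$ are finite-dimensional, \eqref{Phiphi} applied to the random vectors $r\bX$, $s\bY$ and $(r\bX,s\bY)$ gives, a.s.,
\begin{equation}
\Phi_{r\bX}(\bxi)=\phiX(r\bxi),\quad
\Phi_{s\bY}(\bbeta)=\phiY(s\bbeta),\quad
\Phi_{(r\bX,s\bY)}(\bxi,\bbeta)=\gf_{\bX,\bY}(r\bxi,s\bbeta).
\end{equation}
Hence, writing $g(\bt,\bu):=\bigabs{\gf_{\bX,\bY}(\bt,\bu)-\phiX(\bt)\phiY(\bu)}^2\ge0$, the integrand in \eqref{d5a} equals $\E\bigsqpar{g(r\bxi,s\bbeta)}$, where the expectation is over the independent standard Gaussian vectors $\bxi\in\bbR^p$ and $\bbeta\in\bbR^q$.

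Because $g\ge0$, I may apply Tonelli's theorem freely and treat the two blocks of variables separately. The key step is the scaling identity: for every measurable $h\colon\bbR^p\to\ooo$,
\begin{equation}\label{TEHscale}
\intoo\E\bigsqpar{h(r\bxi)}\,\frac{\dd r}{r^{\gb+1}}
=K_p\int_{\bbR^p}h(\bt)\,\frac{\dd\bt}{\abs{\bt}^{p+\gb}},
\qquad
K_p:=\frac{2^{\gb/2-1}}{\pi^{p/2}}\,\gG\Bigparfrac{p+\gb}{2}.
\end{equation}
To prove \eqref{TEHscale} I would write the Gaussian expectation as an integral against the density $(2\pi)^{-p/2}e^{-\abs{\mathbf v}^2/2}$, pass to polar coordinates $\mathbf v=\rho\boldsymbol\omega$, and for each fixed direction $\boldsymbol\omega$ substitute $a=r\rho$ in the inner $r$-integral. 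This decouples the radial scaling and produces a factor $\rho^{\gb}$, so that the angular and $a$-integrations reassemble into $\int_{\bbR^p}h(\bt)\abs{\bt}^{-(p+\gb)}\dd\bt$, while the leftover radial integral $\intoo\rho^{p+\gb-1}(2\pi)^{-p/2}e^{-\rho^2/2}\dd\rho$ evaluates, via $\tau=\rho^2/2$, to $K_p$.

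Applying \eqref{TEHscale} to the $\bxi$-block and its $\bbR^q$-analogue to the $\bbeta$-block (Tonelli again justifying the interchange of order) yields
\begin{equation}
\HDCb(\bX,\bY)=\cb^2K_pK_q\int_{\bbR^p}\int_{\bbR^q}g(\bt,\bu)\,
\frac{\dd\bt\,\dd\bu}{\abs{\bt}^{p+\gb}\abs{\bu}^{q+\gb}},
\end{equation}
which is exactly \eqref{d4} once I check $\cb K_p=\cbx p$ and $\cb K_q=\cbx q$. These I would verify directly: inserting $\cb=2^{1+\gb/2}/\bigpar{-\gG(-\gb/2)}$ from \eqref{cb} and the value of $K_p$ gives $\cb K_p=2^{\gb}\gG\bigpar{(p+\gb)/2}/\bigpar{-\pi^{p/2}\gG(-\gb/2)}$, which is precisely $\cbx p$ as defined in \eqref{cbx}, and similarly for $q$. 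Thus $\cb^2K_pK_q=\cbx p\cbx q$ and the two definitions coincide.

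The only genuine obstacle is the scaling identity \eqref{TEHscale}, that is, recognizing that the rotationally invariant measure $\intoo\cL(r\bxi)\,\dd r/r^{\gb+1}$ equals $K_p\abs{\bt}^{-(p+\gb)}\dd\bt$; once the polar substitution is set up correctly the remainder is Gamma-function bookkeeping, and the normalizations in \eqref{cbx} and \eqref{cb} were chosen precisely so that the constants match. Since all integrands are nonnegative, the whole argument is valid as an identity in $\ooox$, with no moment assumptions, in accordance with \refR{RD4mom}.
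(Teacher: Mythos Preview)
Your proposal is correct and follows essentially the same route as the paper: replace the characteristic random variables by ordinary characteristic functions via \eqref{Phiphi}, write the Gaussian expectation as an integral against the density of $r\bxi\sim N(0,r^2I_p)$, interchange the order of integration (Tonelli), and evaluate the resulting radial integral to recover the weight $|\bt|^{-(p+\gb)}$ with the matching constant $\cbx p/\cb$. The only cosmetic difference is that the paper integrates $\intoo(2\pi r^2)^{-p/2}e^{-|\bt|^2/2r^2}\,r^{-\gb-1}\dd r$ directly, whereas you package the same computation as a scaling identity proved via polar coordinates on the standard Gaussian; the content is identical.
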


\begin{proof}
  Assume that $\HX=\bbR^p$ and $\HY=\bbR^q$.
Then \eqref{Phiphi} implies
\begin{align}
  \Phi_{(r\bX,s\bY)}(\bxi,\bbeta)
=  \gf_{(r\bX,s\bY)}(\bxi,\bbeta)
=  \gf_{(\bX,\bY)}(r\bxi,s\bbeta)
\end{align}
and thus, since $r\bxi\sim N(0,r^2I_p)$ and $s\bbeta\sim N(0,s^2I_q)$,
where $I_k$ is the identity matrix in $\bbR^k$,
\begin{align}
&\E\bigabs{\Phi_{(r\bX,s\bY)}(\bxi,\bbeta)-\Phi_{r\bX}(\bxi)\Phi_{s \bY}(\bbeta)}^2
=
\E\bigabs{\gf_{(\bX,\bY)}(r\bxi,s\bbeta)-\gf_{\bX}(r\bxi)\gf_{\bY}(s\bbeta)}^2
\notag\\&=
\int_{\bt\in\bbR^p}\int_{\bu\in\bbR^q}
\bigabs{\gf_{(\bX,\bY)}(\bt,\bu)-\gf_{\bX}(\bt)\gf_{\bY}(\bu)}^2
\frac{e^{-|\bt|^2/2r^2}}{(2\pi r^2)^{p/2}}
\frac{e^{-|\bu|^2/2s^2}}{(2\pi s^2)^{q/2}}
\dd\bt\dd\bu.
\end{align}
Substituting this in \eqref{d5a}, we obtain \eqref{d4} by interchanging the
order of integration, because, by elementary calculations,
\begin{align}
  \intoo \frac{e^{-|\bt|^2/2r^2}}{(2\pi r^2)^{p/2}}  \frac{\ddx r}{r^{\gb+1}}
=
\frac{2^{\gb/2-1}\gG((p+\gb)/2)}{\pi^{p/2}}|\bt|^{-p-\gb}
=\frac{\cbx{p}}{\cb}|\bt|^{-p-\gb},
\end{align}
see \eqref{cbx} and \eqref{cb},
and similarly for the integral over $s$.
\end{proof}

\begin{remark}\label{R2divH}
  The proof of \refT{TEH} together with \refR{R2div} shows that the
  restriction $\ga<2$ in \refD{D5} is necessary; for $\ga\ge2$, the
  integrals diverge typically, for example for $\HX=\HY=\bbR$ and
  $\bX=\bY\sim N(0,1)$.
(We conjecture that for $\ga\ge2$, the integrals always diverge except when 
$\bX$ and $\bY$ are independent, but we have not verified that.)
\end{remark}

We return to the general Hilbert space case, and show that \refD{D5} agrees
with the earlier ones;
this is an abstract version of \cite[Lemma 4.1]{Mikosch},
where the Hilbert spaces are $L^2\oi$, see
\refR{Rito}.

\begin{theorem}\label{T52}
Let $0<\gb<2$.
If\/ $(\bX,\bY)$ is a pair of random vectors in Hilbert spaces $\HX$ and
$\HY$, 
and $\E\norm{\bX}^\gb<\infty$ and $\E\norm{\bY}^\gb<\infty$,
then \refDs{D2}, \ref{D3} and \ref{D5} agree,
\ie, $\HDCb(\bX,\bY)=\hDCb(\bX,\bY)=\tDCb(\bX,\bY)$,
and this value is finite.
\end{theorem}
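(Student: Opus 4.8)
The plan is to reduce everything to the identity $\HDCb=\tDCb$ and to use the Gaussian formula \eqref{norma} to turn the characteristic random variables back into distances. By \refT{T1} (through \refL{L2}, and since $\gbx=\gb$ when $\gb<2$) the stated moment hypotheses already give $\hXb,\hYb,\tXb,\tYb\in L^2$ and $\hDCb(\bX,\bY)=\tDCb(\bX,\bY)<\infty$; so it remains to show $\HDCb(\bX,\bY)=\tDCb(\bX,\bY)$. The first ingredient is the one--dimensional analogue of the identity used in \cite{SRB2007}: for $0<\gb<2$ and any $\bx$ in a Hilbert space,
\begin{align*}
\cb\intoo\bigpar{1-e^{-r^2\norm{\bx}^2/2}}\frac{\ddx r}{r^{\gb+1}}=\norm{\bx}^\gb ,
\end{align*}
which follows by the substitution $u=r\norm{\bx}$ from the standard formula $\intoo(1-e^{-t})t^{-\gb/2-1}\dd t=-\gG(-\gb/2)$; comparison with \eqref{cb} shows that this is exactly what fixes the constant $\cb$. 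Together with \eqref{norma}, i.e.\ $\E_{\bxi}e^{\ii r\bxi\cdot\bx}=e^{-r^2\norm{\bx}^2/2}$, it lets me trade Gaussian integrals over $\bxi$ for $\gb$--powers of distances.

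Starting from form \eqref{d5c}, I would fix $\bxi,\bbeta$ and put $a_j:=e^{\ii r\bxi\cdot\bX_j}$, $b_j:=e^{\ii s\bbeta\cdot\bY_j}$, $\mu_a:=\E(a_1\mid\bxi)$ and $\mu_b:=\E(b_1\mid\bbeta)$. Then the conditional covariance equals $\E\bigpar{(a_1-\mu_a)(b_1-\mu_b)\mid\bxi,\bbeta}$, so, using a second independent copy for the conjugate factor,
\begin{align*}
\bigabs{\Cov\bigpar{a_1,b_1\mid\bxi,\bbeta}}^2
=\E\bigpar{(a_1-\mu_a)(\bar a_2-\bar\mu_a)(b_1-\mu_b)(\bar b_2-\bar\mu_b)\mid\bxi,\bbeta}.
\end{align*}
Because $\bxi,\bbeta$ are independent of each other and of all copies, and the integrand is bounded by $1$, I may take $\E_{\bxi}$ and $\E_{\bbeta}$ inside; by \eqref{norma} this replaces each $a_i\bar a_j$ by $e^{-r^2\norm{\bX_i-\bX_j}^2/2}$ and each $b_k\bar b_l$ by the analogous $\bY$--Gaussian, giving $\E_{\bxi,\bbeta}\bigabs{\Cov(\cdots)}^2=\E\bigsqpar{S_X(r)S_Y(s)}$, where $S_X(r)$ is the double--centering, in the sense of \eqref{txb2}, of $e^{-r^2\norm{\bX_1-\bX_2}^2/2}$ (a function of $\bX_1,\bX_2$ only, since $\mu_a$ is deterministic given $\bxi$), and $S_Y$ is defined likewise. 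The four signed coefficients $+,-,-,+$ sum to zero, so the completion to $1-e^{-\cdots}$ is legitimate and the fundamental identity may be applied term by term, yielding the pointwise evaluations $\cb\intoo S_X(r)\,r^{-\gb-1}\ddx r=-\tXb$ and $\cb\intoo S_Y(s)\,s^{-\gb-1}\ddx s=-\tYb$. Feeding these into \eqref{d5c} gives, at least formally, $\HDCb=\cb^2\intoo\intoo\E\bigsqpar{S_X(r)S_Y(s)}\frac{\ddx r\dd s}{r^{\gb+1}s^{\gb+1}}=\E\bigsqpar{\tXb\tYb}=\tDCb$.

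The delicate point, which I expect to be the main obstacle, is justifying the interchange of $\E$ with the double integral in $r,s$. Absolute integrability fails: $\intoo\abs{S_X(r)}r^{-\gb-1}\ddx r$ is of the same size as the \emph{uncentered} $\norm{\bX_1-\bX_2}^\gb$ and so is generally not in $L^2$ under only $\gb$ moments, whence $\E\bigsqpar{\bigpar{\intoo\abs{S_X(r)}r^{-\gb-1}\ddx r}\bigpar{\intoo\abs{S_Y(s)}s^{-\gb-1}\ddx s}}$ may be infinite (it involves the uncontrolled joint moment $\E\bigpar{\norm{\bX}^\gb\norm{\bY}^\gb}$). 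I would therefore first prove the identity for bounded $\bX,\bY$, where all moments are finite, $\intoo\abs{S_X(r)}r^{-\gb-1}\ddx r\in L^2$, and the interchange is immediate. For general $\bX,\bY$ I would truncate as in the proof of \refT{T1}: then $\tDCb(\bX\nn,\bY\nn)\to\tDCb(\bX,\bY)$ by \refT{T3}, while for $\HDCb$ the nonnegativity of the integrand $\E\bigsqpar{S_X(r)S_Y(s)}=\E_{\bxi,\bbeta}\abs{\Cov(\cdots)}^2\ge0$ lets me use Fatou to obtain $\HDCb(\bX,\bY)\le\tDCb(\bX,\bY)<\infty$ (hence finiteness), and the matching lower bound by performing the (legitimate) Fubini on compact boxes $[\eps,M]^2$ and then letting $\eps\downto0$, $M\upto\infty$. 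The only genuinely technical step in this last limit is the $L^2$ convergence $\cb\int_\eps^M S_X(r)\,r^{-\gb-1}\ddx r\to-\tXb$; via the conditional--expectation contraction this reduces to the $L^2$ convergence of the corresponding truncated four--point functionals to $\hXb$, which I would establish by a uniform--integrability argument based on the estimates of \refL{L1}, exactly in the spirit of \refL{L3}.
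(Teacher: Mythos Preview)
Your proposal is correct and follows essentially the same route as the paper. Both compute the Gaussian expectation $\E_{\bxi,\bbeta}$ of the squared conditional covariance using \eqref{norma}, arriving at a product of centered exponential kernels whose $r^{-\gb-1}\ddx r$ integral reproduces $\gb$-powers of distances; both then justify the interchange of $\E$ and $\iint$ by approximation, with the key technical input being that the estimates of \refL{L1} hold \emph{uniformly} for the truncated functionals, permitting dominated convergence.

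Two minor comparative remarks. First, you work with the two-copy (\,$\tilde{\,\cdot\,}$) version $S_X(r)$ while the paper uses the four-copy (\,$\widehat{\,\cdot\,}$) version $\GLX(u)$; these are related by $S_X(r)=\E\bigpar{\GLX(r^2/2)\mid\bX_1,\bX_2}$, and your conditional-expectation contraction cleanly reduces one to the other. Second, your detour through truncation of $(\bX,\bY)$ plus Fatou for the upper bound is unnecessary: since the integrand $\E\bigsqpar{S_X(r)S_Y(s)}\ge0$, monotone convergence gives $\HDCb(\bX,\bY)=\lim_{\eps\downto0,\,M\upto\infty}\cb^2\int_{[\eps,M]^2}\dots$, and your $L^2$ convergence $F^X_{\eps,M}\to-\tXb$ (via the uniform \refL{L1}-type bound on the four-point cut-off functional $G^X_{\eps,M}$) then yields equality directly. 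The paper achieves the same effect with a smooth factor $(1-e^{-Mu})(1-e^{-Mv})$ in place of the hard box $[\eps,M]^2$; your observation that $0\le g_{\eps,M}(y)-g_{\eps,M}(x)\le y^{\gb/2}-x^{\gb/2}$ for $x\le y$ is exactly the analogue of the paper's inequality \eqref{jg}, and is what makes the \refL{L1} bound carry over uniformly.
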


\begin{proof}
Let again $(\bX_1,\bY_1),\dots$ be \iid{} copies of
$(\bX,\bY)$, and assume that $\bxi$ and $\bbeta$ are independent of all of
them. Then,
using \eqref{tindra}--\eqref{blanka} and \eqref{norma},
\begin{align}
&\E\bigabs{\Phi_{(r\bX,s\bY)}(\bxi,\bbeta)-\Phi_{r\bX}(\bxi)\Phi_{s
                \bY}(\bbeta)}^2
\notag\\&\quad
= \E \E\Bigsqpar{
\Bigpar{e^{\ii r \bxi\cdot\bX_1+\ii s\bbeta\cdot\bY_1}
  -e^{\ii r \bxi\cdot\bX_1+\ii s\bbeta\cdot\bY_2}}
\Bigpar{e^{-\ii r \bxi\cdot\bX_3-\ii s\bbeta\cdot\bY_3} 
  -e^{-\ii r \bxi\cdot\bX_3-\ii s\bbeta\cdot\bY_4}}
\bigmid \bxi,\bbeta}
\notag\\&\quad
= \E\Bigsqpar{
\Bigpar{e^{\ii r \bxi\cdot\bX_1+\ii s\bbeta\cdot\bY_1}
  -e^{\ii r \bxi\cdot\bX_1+\ii s\bbeta\cdot\bY_2}}
\Bigpar{e^{-\ii r \bxi\cdot\bX_3-\ii s\bbeta\cdot\bY_3} 
  -e^{-\ii r \bxi\cdot\bX_3-\ii s\bbeta\cdot\bY_4}}}
\notag\\&\quad
=
\E e^{\ii r \bxi\cdot(\bX_1-\bX_3)+\ii s\bbeta\cdot(\bY_1-\bY_3)}
-
\E e^{\ii r \bxi\cdot(\bX_1-\bX_3)+\ii s\bbeta\cdot(\bY_1-\bY_4)}
\notag\\&\hskip4em{} -
\E e^{\ii r \bxi\cdot(\bX_1-\bX_3)+\ii s\bbeta\cdot(\bY_2-\bY_3)}
+
\E e^{\ii r \bxi\cdot(\bX_1-\bX_3)+\ii s\bbeta\cdot(\bY_2-\bY_4)}
\notag\\&\quad=
\E e^{-\rrq\norm{\bX_1-\bX_3}^2-\ssq\norm{\bY_1-\bY_3}^2}
-
\E e^{-\rrq\norm{\bX_1-\bX_3}^2-\ssq\norm{\bY_1-\bY_4}^2}
\notag\\&\hskip4em
-
\E e^{-\rrq\norm{\bX_1-\bX_3}^2-\ssq\norm{\bY_2-\bY_3}^2}
+
\E e^{-\rrq\norm{\bX_1-\bX_3}^2-\ssq\norm{\bY_2-\bY_4}^2}
.\label{jb}
\end{align}
Define the real-valued random variable
\begin{align}\label{GLX}
\GLX(u)
:=\E e^{-u\norm{\bX_1-\bX_2}^2} 
-\E e^{-u\norm{\bX_2-\bX_3}^2}
+\E e^{-u\norm{\bX_3-\bX_4}^2}
- \E e^{-u\norm{\bX_4-\bX_1}^2}
\end{align}
and define $\GLY(u)$ similarly.
Then, by expanding the product and using symmetry,
\begin{align}
&  \E\bigsqpar{\GLX(u)\GLY(v)}
\notag\\&\hskip1em
=4\Bigl(
\E e^{-u\norm{\bX_1-\bX_3}^2-v\norm{\bY_1-\bY_3}^2}
-
\E e^{-u\norm{\bX_1-\bX_3}^2-v\norm{\bY_1-\bY_4}^2}
\notag\\&\hskip3em
-
\E e^{-u\norm{\bX_1-\bX_3}^2-v\norm{\bY_2-\bY_3}^2}
+
\E e^{-u\norm{\bX_1-\bX_3}^2-v\norm{\bY_2-\bY_4}^2}
\Bigr).
\label{ja}
\end{align}
Consequently,
\eqref{jb} yields
\begin{align}\label{jo}
&\E\bigabs{\Phi_{(r\bX,s\bY)}(\bxi,\bbeta)-\Phi_{r\bX}(\bxi)\Phi_{s \bY}(\bbeta)}^2
= \frac{1}{4}
 \E\Bigsqpar{\GLX\Bigpar{\frac{r^2}{2}}\GLY\bigpar{\frac{s^2}2}}
\end{align}
and the definition \eqref{d5b} yields, with a change of variables,
\begin{align}
  \HDCb(\bX,\bY)&
= \frac{\cb^2}4\intoo \intoo 
\E\Bigsqpar{\GLX\Bigpar{\frac{r^2}{2}}\GLY\bigpar{\frac{s^2}2}}
    \frac{\ddx r\dd s}{r^{\gb+1} s^{\gb+1}}
\notag\\&
= \frac{\cb^2}{2^{4+\gb}}\intoo \intoo 
\E\bigsqpar{\GLX\xpar{u}\GLY\xpar{v}}
    \frac{\ddx u\dd v}{u^{\gb/2+1} v^{\gb/2+1}}
.\label{jc}
\end{align}
We rewrite \eqref{GLX} as, 
with indices interpreted \MOD4,
\begin{align}\label{jd}
  \GLX(u)=\sumiv (-1)^{i-1}e^{-u\norm{\bX_i-\bX_{i+1}}^2}
=\sumiv (-1)^{i}\bigpar{1-e^{-u\norm{\bX_i-\bX_{i+1}}^2}}.
\end{align}
Recall that for $0<\gam<1$,
see \cite[(5.9.5)]{NIST},
\begin{align}\label{je}
  \intoo\bigpar{1-e^{-x}}x^{-\gam-1}\dd x = -\gG(-\gam).
\end{align}
Hence, \eqref{jd} and a change of variables yield
\begin{align}\label{jf}
\intoo  \GLX(u)  \frac{\ddx u}{u^{\gb/2+1}} 
 & 
=\sumiv (-1)^{i}\intoo\bigpar{1-e^{-u\norm{\bX_i-\bX_{i+1}}^2}} 
\frac{\ddx u}{u^{\gb/2+1}}
\notag\\&
=-\gG(-\gb/2)\sumiv (-1)^{i}\norm{\bX_i-\bX_{i+1}}^\gb
\notag\\&
=\gG(-\gb/2)\hXb.
\end{align}
If we naively interchange order of integrations and expectation in
\eqref{jc}, and use \eqref{jf}, we obtain \eqref{d2}
and thus $\HDCb(\bX,\bY)=\hDCb(\bX,\bY)$,
since $\cb$ is defined in \eqref{cb}
so that constant factors cancel.
However, this interchange requires justification; indeed it is not always
allowed, since the expectation in \eqref{d2} does not always exist, not even
as an extended real number, see \refE{ED2bad}, while \eqref{d5a}--\eqref{d5c}
always exist in $[0,\infty]$.

Hence, we introduce an integrating factor.
Let $M>0$; we will later let $\Mtoo$. 
Similarly to \eqref{jf}, we have
\begin{align}\label{jfa}
&\intoo  e^{-Mu}\GLX(u)  \frac{\ddx u}{u^{\gb/2+1}} 
\notag\\&\qquad
=\sumiv (-1)^{i}\intoo\bigpar{e^{-Mu}-e^{-u(\norm{\bX_i-\bX_{i+1}}^2+M)}} 
\frac{\ddx u}{u^{\gb/2+1}}
\notag\\&\qquad
=-\gG(-\gb/2)\sumiv (-1)^{i}
  \bigpar{\bigpar{\norm{\bX_i-\bX_{i+1}}^2+M}^{\gb/2}-M^{\gb/2}}.
\end{align}
Let $\gb\in(0,2)$ be given and
define, for $x\ge0$,
\begin{align}\label{hM}
  h_M(x):=x^{\gb/2}+M^{\gb/2}-(x+M)^{\gb/2}.
\end{align}
Then, \eqref{jf} and \eqref{jfa} yield
\begin{align}\label{jfb1}
\intoo \bigpar{1- e^{-Mu}}\GLX(u)  \frac{\ddx u}{u^{\gb/2+1}} 
&=\gG(-\gb/2)\sumiv (-1)^{i-1} h_M\bigpar{\norm{\bX_i-\bX_{i+1}}^2}
\notag\\&
=:\gG(-\gb/2)\hXbM,
\end{align}
where thus we define
\begin{align}\label{jfb2}
  \hXbM:=\sumiv (-1)^{i-1} h_M\bigpar{\norm{\bX_i-\bX_{i+1}}^2}.
\end{align}
Note also that the integrand in \eqref{jc} is non-negative by \eqref{jo}.
Hence, \eqref{jc} and monotone convergence yield
\begin{align}
&  \HDCb(\bX,\bY)
\notag\\&
= \lim_\Mtoo\frac{\cb^2}{2^{4+\gb}}\intoo \intoo 
\bigpar{1-e^{-Mu}}\bigpar{1-e^{-Mv}}
\E\bigsqpar{\GLX\xpar{u}\GLY\xpar{v}}
    \frac{\ddx u\dd v}{u^{\gb/2+1} v^{\gb/2+1}}
.\label{jcm}
\end{align}
Furthermore, $|\GLX(u)|$ and $|\GLY(v)|$ are bounded (by 4) by \eqref{jd}, and
thus Fubini applies so we may interchange expectation and integrations in
\eqref{jcm}, which by \eqref{jfb1} yields, recalling \eqref{cb},  
\begin{align}
  \HDCb(\bX,\bY)
= \lim_\Mtoo\tfrac{1}{4}\E [\hXbM\hYbM]
.\label{jcg}
\end{align}

Since $\gb/2\in(0,1)$, the function $h_M$ 
in \eqref{hM}
is increasing, with $h_M(0)=0$ and
$h_M(x)\upto M^{\gb/2}$ as $\xtoo$.
Similarly, $h_M(x)=h_x(M)\upto x^{\gb/2}$ as $\Mtoo$; 
hence, the definitions \eqref{jfb2} and \eqref{hxb} yield
\begin{align}\label{jh}
  \hXbM\to\hXb
\qquad\text{as \Mtoo}.
\end{align}
Furthermore, if $0\le x\le y$, then
\begin{align}
  0\le h_M(y)-h_M(x) \le y^{\gb/2}-x^{\gb/2},
\end{align}
and it follows that for any $\bZ_1,\bZ_2\in\HX$, 
\begin{align}\label{jg}
\bigabs{h_M\bigpar{\norm{\bZ_1}^2} - h_M\bigpar{\norm{\bZ_2}^2}}
\le
\bigabs{\norm{\bZ_1}^\gb - \norm{\bZ_2}^\gb}.
\end{align}

We claim that \refL{L1} holds for $\hXbM$ too, so that, in particular,
\begin{align}\label{l1bM}
  |\hXbM|
\le C\sumiv \norm{\bX_i}^{\gb/2}\norm{\bX_{i+1}}^{\gb/2},
\end{align}
where the constant $C$ does not depend on $M$.
This is seen by repeating the proof of \refL{L1}, 
recalling the definition \eqref{jfb2} of $\hXbM$ and
using \eqref{jg}; we omit the details.

Let $\Xxx$ be the \rhs{} of \eqref{l1bM}.
We now use the assumption $\E\norm{\bX}^\gb<\infty$, which implies that
$\Xxx\in L^2$. Similarly, $|\hYbM|\le\Yxx$ with $\Yxx\in L^2$.
Consequently, $|\hXbM\hYbM|\le\Xxx\Yxx\in L^1$, so dominated convergence
applies to \eqref{jcg} and we obtain, by \eqref{jh},
\begin{align}
  \HDCb(\bX,\bY)
= \tfrac{1}{4}\E [\lim_\Mtoo\hXbM\hYbM]
= \tfrac{1}{4}\E [\hXb\hYb]
=  \hDCb(\bX,\bY),
\label{jck}
\end{align}
using \eqref{d2}. 
Hence, \refDs{D5} and \ref{D2} agree
(under the given moment condition).
By \refT{T1}, they agree with \refD{D3} too; furthermore, the value is finite.
\end{proof}

\begin{remark}\label{RD5M}
  Note that the proof shows that 
\eqref{jcg} holds for any random variables in
 Hilbert spaces, without any moment condition. 
(With the result possibly  $+\infty$.) 
\end{remark}

\subsection{Independence and distance covariance}\label{SSindDC}
For (separable) Hilbert spaces,
as said in the introduction,
\citet[Theorem 3.16]{Lyons} showed that
\eqref{iff} holds for $\gb=1$,
and
\citet[Theorem 4.2]{Mikosch} extended this to all $\gb\in(0,2)$. 
That is, they proved (a version of) the following, which we now easily can
prove using the results above.

\begin{theorem}[{\citet[Theorem 4.2]{Mikosch}}]\label{TH}
Let $\cX=\HX$ and $\cY=\HY$ be separable Hilbert spaces
and let $\gb\in(0,2)$.
Use \refD{D1}, \ref{D2}, \ref{D3} or \ref{D5}, and assume (for the first
three)
the moment condition there.
Then $\DCb(\bX,\bY)=0$ if and only if\/ $\bX$ and $\bY$ are independent.
\end{theorem}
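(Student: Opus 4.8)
The plan is to reduce all four definitions to \refD{D5} and then read off the conclusion from \refT{Tind}. First I would check that each of the stated moment conditions implies $\E\norm{\bX}^\gb<\infty$ and $\E\norm{\bY}^\gb<\infty$: for $\gb\in(0,2)$ we have $\gbx=\gb$, so the hypothesis of \refDs{D2} and \ref{D3} is exactly finiteness of the $\gb$th moments, while the hypothesis $2\gb$ moments of \refD{D1} is stronger. Hence \refT{T52} (combined with \refT{T1} to bring in \refD{D1}) shows that, under whichever moment condition is assumed, the chosen version of $\DCb(\bX,\bY)$ coincides with $\HDCb(\bX,\bY)$. It therefore suffices to prove the equivalence for $\HDCb$, where no moment condition is needed.

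The crux is an observation that lets me apply \refT{Tind} directly, without any continuity argument in the integration variables. For fixed $r,s>0$, the identity $\bxi\cdot(r\bX)=r\,\bxi\cdot\bX$ gives $e^{\ii r\bxi\cdot\bX}=e^{\ii\bxi\cdot(r\bX)}$, and similarly for $\bY$; moreover $\bxi,\bbeta$ are independent of $(r\bX,s\bY)$. Thus \refT{Tind}, in the form \eqref{olivia}, applied to the scaled pair $(r\bX,s\bY)$ states that
\begin{align*}
&\Cov\Bigpar{e^{\ii r\bxi\cdot\bX},e^{\ii s\bbeta\cdot\bY}\bigmid\bxi,\bbeta}=0\textas
\\&\qquad\iff
r\bX\text{ and }s\bY\text{ are independent}.
\end{align*}
Since $r,s>0$, the pair $(r\bX,s\bY)$ is independent precisely when $(\bX,\bY)$ is, and the left-hand side is exactly the vanishing of the integrand of \eqref{d5c} at the parameters $(r,s)$.

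For the (trivial) forward direction, if $\bX$ and $\bY$ are independent then $(r\bX,s\bY)$ is independent for every $r,s>0$, so by the displayed equivalence the integrand in \eqref{d5c} vanishes identically and $\HDCb(\bX,\bY)=0$. For the converse, suppose $\HDCb(\bX,\bY)=0$. The integrand in \eqref{d5c} is nonnegative and the weight $r^{-\gb-1}s^{-\gb-1}$ is strictly positive on $(0,\infty)^2$, so the integral can be zero only if $\E\bigabs{\Cov(e^{\ii r\bxi\cdot\bX},e^{\ii s\bbeta\cdot\bY}\mid\bxi,\bbeta)}^2=0$ for Lebesgue-almost every $(r,s)$. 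Fixing any single such pair yields $\Cov(e^{\ii r\bxi\cdot\bX},e^{\ii s\bbeta\cdot\bY}\mid\bxi,\bbeta)=0$ a.s., and the displayed equivalence then forces $r\bX,s\bY$, hence $\bX,\bY$, to be independent.

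I expect the only genuine bookkeeping to be in the first paragraph, verifying that every assumed moment condition feeds into \refT{T52} so that all four definitions legitimately collapse to $\HDCb$. Once that reduction is made, the analytic content is light: the scaling identity $e^{\ii r\bxi\cdot\bX}=e^{\ii\bxi\cdot(r\bX)}$ converts the parametrised integrand of \refD{D5} into the independence criterion of \refT{Tind}, so nonnegativity of the integrand alone delivers the nontrivial implication, and I never need to single out the value $r=s=1$ or argue continuity in $(r,s)$.
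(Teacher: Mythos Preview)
Your proposal is correct and follows essentially the same route as the paper: reduce every definition to $\HDCb$ via \refT{T52} (and \refT{T1} for \refD{D1}), then read off independence from \refT{Tind}. Your treatment is in fact slightly more explicit than the paper's: where the paper simply asserts that $\HDCb(\bX,\bY)=0$ is equivalent to \eqref{tind}, you spell out the scaling identity $e^{\ii r\bxi\cdot\bX}=e^{\ii\bxi\cdot(r\bX)}$ and apply \refT{Tind} to the pair $(r\bX,s\bY)$ at a single $(r,s)$ where the integrand vanishes, which cleanly avoids any implicit appeal to continuity in $(r,s)$ or to the particular point $r=s=1$. (Incidentally, your citation of \refT{T52} is the right one; the paper's reference to \refT{TEH} in this step appears to be a slip.)
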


\begin{proof}
For \refDs{D1}--\ref{D3}, the moment condition there and \refTs{T1} and
\ref{TEH} show that $\DCb(\bX,\bY)$ equals 
$\HDCb(\bX,\bY)$  given by \refD{D5}.
Hence, we may in all cases use $\HDCb$. 
It follows from \eqref{d5b} that $\HDCb(\bX,\bY)=0$ if and only if
\eqref{tind} holds, and the result follows by \refT{Tind}.
\end{proof}

\begin{remark}\label{RH}
  This theorem is stated in \cite{Mikosch} for the case $\cX=\cY=L^2\oi$ (so
  $\bX$ and $\bY$ are stochastic processes on $\oi$),
but since all infinite-dimensional separable Hilbert spaces are isomorphic;
the result can be stated as above.
(Only stochastic processes $\bX,\bY$ that satisfy some
smoothness conditions 
are considered in \cite{Mikosch}, but this is for other reasons and
is not needed for \refT{TH}.)

The theorem in \cite{Mikosch} is stated assuming only finite $\gb$ moments,
as we do above 
for \refDs{D2} and \ref{D3}; 
however, \cite{Mikosch}  uses
\refD{D1} which in general requires somewhat more for existence, see
\refT{TD1?} below.
\end{remark}

\begin{remark}
\refT{TH} includes the case when $\cX$ or $\cY$ has finite dimension,
\ie, is a Euclidean space.

Furthermore, although the theorem is stated for separable Hilbert spaces, it
extends also to non-separable spaces, provided we assume that
$\bX$ and $\bY$ are Bochner measurable, for the trivial reason that 
this implies that $\bX$ and $\bY$ \as{} take values in some separable
subspaces $\HX_1$ and $\HY_1$.
\end{remark}

\begin{remark}
  The proof of \refT{TH} would be much simpler if distance covariance was
  monotone under orthogonal projections, so that we would have
$\DCb(\Pi_N\bX,\Pi_N\bY)\le \DCb(\bX,\bY)$. 
However, this is not always the case,
even in finite dimension, as is seen by the following example.
\end{remark}

\begin{example}
  \label{Eproj}
Let $\cX=\cY=\bbR^2$ and let $\bX=(X',X'')$ and $\bY=(Y',Y'')$, where
$X'=Y'$, but $X',X'',Y''$ are independent and non-degenerate.
(For definiteness, we may take $X',X'',Y''\sim\Be(1/2)$, or $N(0,1)$.)
Let $\Pi:\bbR^2\to\bbR$ be the standard projection onto the first
coordinate, so $(\Pi\bX,\Pi\bY)=(X',Y')$.

For $a\in\bbR$, let $\bX(a):=(X',aX'')$ and $\bY(a):=(Y',aY'')$; thus
$(\bX(1),\bY(1))=(\bX,\bY)$ and 
$(\bX(0),\bY(0))=(X',Y')$ (regarding $\bbR$ as a subspace of $\bbR^2$).
For $\bt=(t',t'')$ and $\bu=(u',u'')$, we have
\begin{align}\label{rex}
  \gf_{\bX(a),\bY(a)}(\bt,\bu)
&=\E e^{\ii(t'X'+u'X'+t''aX''+u''aY'')}
\notag\\&
=\gf_{X'}(t'+u')\gf_{X''}(at'')\gf_{Y''}(au'')
\end{align}
and similarly (or by  taking $\bt=0$ or $\bu=0$ in \eqref{rex})
\begin{align}
    \gf_{\bX(a)}(\bt)
=\gf_{X'}(t')\gf_{X''}(at''),
&&&
  \gf_{\bY(a)}(\bu)
=\gf_{X'}(u')\gf_{Y''}(au'').
\end{align}
Hence, \eqref{d4} yields
{\multlinegap=0pt
\begin{multline}
  \DCb(\bX(a),\bY(a))
\\
=
\cbx{2}\cbx{2}\int_{\bt\in\bbR^2}\int_{\bu\in\bbR^2}
\bigabs{
\gf_{X'}(t'+u')-\gf_{X'}(t')\gf_{X'}(u')}^2
\bigabs{\gf_{X''}(at'')\gf_{Y''}(au'')}^2
\\
\frac{\dd\bt\dd\bu}
{|\bt|^{2+\gb} |\bu|^{2+\gb}}
\end{multline}}%
and it is obvious that
\begin{multline}\label{mv}
 \DCb(\bX,\bY)
=  \DCb(\bX(1),\bY(1))
\\
<
 \DCb(\bX(0),\bY(0))
=\DCb(X',Y')
=
\DCb(\Pi\bX,\Pi\bY).
\end{multline}
Thus, an orthogonal projection might increase distance covariance.

It can obviously also decrease it; for example the projection onto the
second coordinate above yields $(X'',Y'')$ with $\DCb(X'',Y'')=0$.
\end{example}

\section{Hilbert spaces and $\gb=2$}\label{S2}

We continue to assume that $\cX$ and $\cY$ are Hilbert spaces; we now
consider the case $\gb=2$. Note that \refD{D5} does not apply (it requires
$\gb<2$, see \refR{R2divH}), so we return to the general \refDs{D1}--\ref{D3}.

In this case, \eqref{hxb} yields, by expanding all $\norm{\bX_i-\bX_j}^2$,
\begin{align}\label{perseus}
  \hXii
&=-2\innprod{\bX_1,\bX_2}+2\innprod{\bX_2,\bX_3}-2\innprod{\bX_3,\bX_4}
+2\innprod{\bX_4,\bX_1}
\notag\\&
=2\innprod{\bX_1-\bX_3,\bX_4-\bX_2}.
\end{align}
Assume, as in \refDs{D2} and \ref{D3}, that $\E\norm{\bX}^2<\infty$.
Then $\E\bX$ exists, in Bochner sense (see \refApp{ABP}),
and  \eqref{txb1} together with \eqref{perseus} yield
\begin{align}\label{antigone}
  \tXii
&=\E\bigpar{\hXii\mid \bX_1,\bX_2}
=-2\innprod{\bX_1-\E\bX,\bX_2-\E \bX}.
\end{align}
We thus see directly that \eqref{l1b} and \eqref{l1c} hold, and thus
$\hXii,\tXii\in L^2$ if $\E\norm{\bX}^2<\infty$, as asserted by \refL{L2}.

In particular, in the 1-dimensional case $\cX=\bbR$,
\begin{align}\label{dim1}
\hXii=2(\bX_1-\bX_3)(\bX_4-\bX_2),
&&&  
\tXii=-2(\bX_1-\E\bX)(\bX_2-\E\bX),
\end{align}
with the latter assuming $\E|\bX|^2<\infty$.
Consequently, 
if $\cX=\cY=\bbR$ and $\E|\bX|^2,\E|\bY|^2<\infty$, 
then \refD{D3} yields, using \eqref{dim1} and independence, 
\begin{align}\label{dc1}
  \DCii(\bX,\bY)=\E\bigsqpar{\tX_2\tY_2}=4{\Cov(\bX,\bY)^2},
\end{align}
as noted by \citet{SRB2007}.
(\refDs{D1}--\ref{D2} agree by \refTs{T1}.)
This extends to higher dimensional Euclidean spaces and, more generally,
Hilbert spaces as follows.
Let $\HX\tensor\HY$ denote the Hilbert space tensor product of $\HX$ and
$\HY$, see \eg{} \cite[Appendix E]{SJIII};
recall that this is a Hilbert space such that there is a bilinear map 
$\tensor:\HX\times\HY\to\HX\tensor\HY$ with 
\begin{align}
  \label{tensor}
\innprod{\bx_1\tensor\by_1 ,\bx_2\tensor\by_2}_{\HX\tensor\HY}
=
\innprod{\bx_1 ,\bx_2}_{\HX}\innprod{\by_1 ,\by_2}_{\HY};
\end{align}
furthermore, if $\set{\be_i}_i$ and $\set{\bex_j}_j$ are ON-bases in $\HX$ and
$\HY$, then $\set{\be_i\tensor\bex_j}_{i,j}$ is an ON-basis in
$\HX\tensor\HY$.
(Note that the mapping $\tensor$ is neither injective nor surjective, but
the set of finite linear combinations $\sum_i \bx_i\tensor\by_j$ is dense in
$\HX\tensor\HY$.) 
Hence, 
$\bX\tensor\bY$ is a random variable in $\HX\tensor\HY$ with
$\norm{\bX\tensor\bY}=\norm{\bX}\,\norm{\bY}$.

\begin{theorem}\label{TH2}
  Let $\cX=\HX$ and $\cY=\HY$ be separable Hilbert spaces, 
and assume $\E\norm{\bX}^2<\infty$
and $\E\norm{\bY}^2<\infty$.
Let $(\be_i)_i$ and $(\bex_j)_j$ be ON-bases in $\HX$ and $\HY$. Then,
\begin{align}
  \DCii(\bX,\bY)
&=4\sum_{i,j}\Cov\bigpar{\innprod{\bX,\be_i},\innprod{\bY,\bex_j}}^2
\label{th2a}\\
&=4\bignorm{\E(\bX\tensor\bY)-\E\bX\tensor\E\bY}^2_{\HX\tensor\HY}
\label{th2b}
\end{align}
\end{theorem}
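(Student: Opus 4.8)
The plan is to start from \refD{D3} together with the explicit formula \eqref{antigone}, and then to recognise the resulting product of two inner products as a \emph{single} inner product in the tensor product space $\HX\tensor\HY$; after that everything reduces to elementary manipulations of Bochner expectations, and the two asserted formulas will follow, one of them by Parseval's identity.

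First, by \refD{D3} and \eqref{antigone} (together with its analogue for $\tYii$),
\begin{align*}
\DCii(\bX,\bY)&=\E\bigsqpar{\tXii\tYii}\\
&=4\,\E\Bigsqpar{\innprod{\bX_1-\E\bX,\,\bX_2-\E\bX}\innprod{\bY_1-\E\bY,\,\bY_2-\E\bY}},
\end{align*}
which is a finite number since $\tXii,\tYii\in L^2$ by \refL{L2}. The key observation is that, by the defining property \eqref{tensor} of the tensor product, the integrand equals $\innprod{U_1,U_2}_{\HX\tensor\HY}$, where $U_i:=(\bX_i-\E\bX)\tensor(\bY_i-\E\bY)$; thus $U_1,U_2$ are independent copies of $U:=(\bX-\E\bX)\tensor(\bY-\E\bY)$.

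Next I would verify that $U$ is Bochner integrable: by the \CSineq{}, $\E\norm{U}=\E\bigpar{\norm{\bX-\E\bX}\,\norm{\bY-\E\bY}}\le\bigpar{\E\norm{\bX-\E\bX}^2}\qq\bigpar{\E\norm{\bY-\E\bY}^2}\qq<\infty$ under the stated moment conditions, and likewise $\E\norm{\bX\tensor\bY}=\E\bigpar{\norm{\bX}\,\norm{\bY}}<\infty$, so $\E(\bX\tensor\bY)$ exists in the Bochner sense (see \refApp{ABP}). Using the elementary properties of the Bochner integral collected in \refApp{ABP} — in particular that for independent integrable Hilbert-space-valued variables one has $\E\innprod{U_1,U_2}=\innprod{\E U_1,\E U_2}$ (proved by conditioning on $U_2$ and using that a continuous linear functional commutes with the expectation) — I obtain $\DCii(\bX,\bY)=4\innprod{\E U_1,\E U_2}=4\norm{\E U}^2$. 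It then remains to identify $\E U$: expanding the tensor product bilinearly and using that tensoring by a fixed vector is a bounded linear operation (so the expectation passes through a constant tensor factor), the three remaining terms combine into $-\E\bX\tensor\E\bY$, giving $\E U=\E(\bX\tensor\bY)-\E\bX\tensor\E\bY=:Z$. This is exactly \eqref{th2b}. To deduce \eqref{th2a} I would apply Parseval's identity in the ON-basis $\set{\be_i\tensor\bex_j}_{i,j}$ of $\HX\tensor\HY$; its Fourier coefficients are, once more by \eqref{tensor} and by commuting the inner product with the Bochner expectation,
\begin{align*}
\innprod{Z,\be_i\tensor\bex_j}
&=\E\bigsqpar{\innprod{\bX,\be_i}\innprod{\bY,\bex_j}}
-\E\innprod{\bX,\be_i}\,\E\innprod{\bY,\bex_j}\\
&=\Cov\bigpar{\innprod{\bX,\be_i},\innprod{\bY,\bex_j}},
\end{align*}
so that $\norm{Z}^2=\sum_{i,j}\Cov\bigpar{\innprod{\bX,\be_i},\innprod{\bY,\bex_j}}^2$, which is \eqref{th2a}.

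I expect the only real work to lie in the bookkeeping of the Bochner-integral manipulations rather than in any conceptual difficulty: one must confirm that each tensor-valued variable in sight is integrable, and that expectation genuinely commutes both with tensoring by a constant vector and with the inner product against an independent factor. Once the pointwise identity $\innprod{U_1,U_2}_{\HX\tensor\HY}=\innprod{\bX_1-\E\bX,\bX_2-\E\bX}\innprod{\bY_1-\E\bY,\bY_2-\E\bY}$ is in place, the remainder is linear algebra inside the Bochner integral together with Parseval.
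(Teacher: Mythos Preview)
Your argument is correct. The route you take is closely related to the paper's but organized in the opposite order: the paper first centres the variables (assuming $\E\bX=\E\bY=0$ by shift invariance), expands $\tXii\tYii$ in coordinates using the bases $(\be_i)_i$ and $(\bex_j)_j$, justifies interchanging the double sum with the expectation by a Cauchy--Schwarz bound, and thereby obtains \eqref{th2a} directly; \eqref{th2b} then follows from \eqref{th2a} by Parseval. You instead stay coordinate-free, recognise $\tXii\tYii$ as $4\innprod{U_1,U_2}_{\HX\tensor\HY}$, use Bochner integrability and independence to get $\DCii(\bX,\bY)=4\norm{\E U}^2$, identify $\E U$ to obtain \eqref{th2b}, and only then descend to coordinates via Parseval to get \eqref{th2a}. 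Your version trades the explicit Fubini step for the (equally routine) verification that expectation commutes with the inner product against an independent Bochner-integrable factor; both approaches are of the same difficulty, and yours has the minor aesthetic advantage of keeping the tensor-product formulation in the foreground until the last step.
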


\begin{proof}
Since $\DCb(\bX,\bY)$ and the expressions in \eqref{th2a}--\eqref{th2b} are
invariant under (deterministic) shifts of $\bX$ and $\bY$, we may for
convenience assume $\E\bX=\E\bY=0$.
Then,   by \eqref{antigone},
  \begin{align}\label{hektor}
\tXii\tYii   &
=4\innprod{\bX_1,\bX_2}\innprod{\bY_1,\bY_2}
= 4\sum_{i,j}\innprod{\bX_1,\be_i}\innprod{\bX_2,\be_i}
\innprod{\bY_1,\bex_j}\innprod{\bY_2,\bex_j}.
  \end{align}
We have, by the \CSineq,
\begin{align}
\sum_{i}\bigabs{\innprod{\bX_1,\be_i}\innprod{\bX_2,\be_i}}
\le
\Bigpar{\sum_{i}\innprod{\bX_1,\be_i}^2}\qq
\Bigpar{\sum_{i}\innprod{\bX_2,\be_i}^2}\qq
=\norm{\bX_1}\,\norm{\bX_2}
\end{align}
and thus, by independence and the \CSineq{} again,
\begin{align}\label{akilles}
\E
\sum_{i,j}\bigabs{\innprod{\bX_1,\be_i}\innprod{\bX_2,\be_i}
\innprod{\bY_1,\bex_j}\innprod{\bY_2,\bex_j}}
\le
\E \bigsqpar{\norm{\bX_1}\,\norm{\bX_2}\,\norm{\bY_1}\,\norm{\bY_2}}
\notag\\
=  \bigpar{\E\bigsqpar{\norm{\bX_1}\,\norm{\bY_1}}}^2
\le
 \E \norm{\bX_1}^2\E\norm{\bY_1}^2<\infty.
\end{align}
Hence, \eqref{hektor} yields by Fubini's theorem, justified by \eqref{akilles},
  \begin{align}\label{helena}
\E\bigsqpar{\tXii\tYii}   &
= 4\sum_{i,j}\E\bigsqpar{\innprod{\bX_1,\be_i}\innprod{\bX_2,\be_i}
\innprod{\bY_1,\bex_j}\innprod{\bY_2,\bex_j}}
\notag\\&
=4\sum_{i,j}\bigpar{\E\bigsqpar{\innprod{\bX,\be_i}\innprod{\bY,\bex_j}}}^2
  \end{align}
which yields \eqref{th2a}.

Moreover, $\set{\be_i\tensor\bex_j}_{i,j}$ is an ON-basis in 
$\HX\tensor\HY$, and thus
\begin{align}
\bignorm{\E(\bX\tensor\bY)}^2  
&=
\sum_{i,j}\innprod{\E(\bX\tensor\bY),\be_i\tensor\bex_j}^2
=
\sum_{i,j}\bigpar{\E\innprod{\bX\tensor\bY,\be_i\tensor\bex_j}}^2
\notag\\&
=
\sum_{i,j}\bigpar{\E\bigsqpar{\innprod{\bX,\be_i}\innprod{\bY,\bex_j}}}^2
\end{align}
which together with \eqref{helena} yields \eqref{th2b}.
\end{proof}

\begin{corollary}  \label{CH2}
  Let $\cX=\HX$ and $\cY=\HY$ be separable Hilbert spaces, 
and assume $\E\norm{\bX}^2<\infty$
and $\E\norm{\bY}^2<\infty$.
Then, the following are equivalent:
\begin{romenumerate}
  
\item \label{CH2a}
$\DCii(\bX,\bY)=0$.
\item \label{CH2b}
$\Cov\bigpar{\innprod{\bX,\bx},\innprod{\bY,\by}}=0$
for every $\bx\in\HX$, $\by\in\HY$.
\item \label{CH2c}
$
\E(\bX\tensor\bY)-\E\bX\tensor\E\bY=0$.
\end{romenumerate}
\end{corollary}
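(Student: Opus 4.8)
The plan is to read all three equivalences directly off \refT{TH2}, which already expresses $\DCii(\bX,\bY)$ both as a sum of squared covariances \eqref{th2a} and as a squared norm in $\HX\tensor\HY$ \eqref{th2b}; no new computation is really needed beyond interpreting those two formulas.

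First I would dispose of \ref{CH2a}$\iff$\ref{CH2c}. By \eqref{th2b},
\begin{align*}
\DCii(\bX,\bY)=4\bignorm{\E(\bX\tensor\bY)-\E\bX\tensor\E\bY}^2_{\HX\tensor\HY},
\end{align*}
and since the norm of a vector in a Hilbert space vanishes if and only if the vector itself is $0$, this quantity is $0$ exactly when $\E(\bX\tensor\bY)-\E\bX\tensor\E\bY=0$. This gives \ref{CH2a}$\iff$\ref{CH2c} at once.

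Next I would link \ref{CH2b} and \ref{CH2c} through a single identity. For fixed $\bx\in\HX$ and $\by\in\HY$, the tensor relation \eqref{tensor} gives $\innprod{\bX\tensor\bY,\bx\tensor\by}=\innprod{\bX,\bx}\innprod{\bY,\by}$; taking expectations (the integrand is dominated by $\norm{\bx}\,\norm{\by}\,\norm{\bX}\,\norm{\bY}\in L^1$ by Cauchy--Schwarz under the moment assumption, exactly as in the proof of \refT{TH2}) and using the analogous identity for $\E\bX\tensor\E\bY$ yields
\begin{align*}
\Cov\bigpar{\innprod{\bX,\bx},\innprod{\bY,\by}}
=\innprod{\E(\bX\tensor\bY)-\E\bX\tensor\E\bY,\;\bx\tensor\by}_{\HX\tensor\HY}.
\end{align*}
Thus \ref{CH2b} states precisely that the single fixed vector $\E(\bX\tensor\bY)-\E\bX\tensor\E\bY$ is orthogonal to every elementary tensor $\bx\tensor\by$. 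Since finite linear combinations of such tensors are dense in $\HX\tensor\HY$ (as recorded in the discussion preceding \refT{TH2}), a vector orthogonal to all of them must be $0$; conversely, if the vector is $0$ then all these inner products vanish. Hence \ref{CH2b}$\iff$\ref{CH2c}, and combined with the first step all three statements are equivalent.

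The only points needing care are the interchange of expectation and inner product (justified by Bochner integrability together with the $L^1$ domination above) and the density of elementary tensors in $\HX\tensor\HY$; both are mild. As an alternative to the second step one could instead derive \ref{CH2a}$\iff$\ref{CH2b} from \eqref{th2a}: vanishing of $\DCii$ forces every $\Cov\bigpar{\innprod{\bX,\be_i},\innprod{\bY,\bex_j}}$ to vanish, and bilinearity together with continuity of $(\bx,\by)\mapsto\Cov\bigpar{\innprod{\bX,\bx},\innprod{\bY,\by}}$ (bounded by $\norm{\bx}\,\norm{\by}(\E\norm{\bX}^2\,\E\norm{\bY}^2)\qq$) then propagates this to all $\bx,\by$. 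I expect the tensor-identity route to be cleaner, so I would present that one; the only genuinely nontrivial ingredient in either route is the density/orthogonality argument, which is standard.
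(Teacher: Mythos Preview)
Your proof is correct. Your route for \ref{CH2a}$\iff$\ref{CH2c} is exactly what the paper does (``immediate from \refT{TH2}''). For getting \ref{CH2b}, however, the paper uses a shorter trick than your tensor-identity-plus-density argument: to prove \ref{CH2a}$\implies$\ref{CH2b} for given $\bx,\by\neq 0$, it simply chooses ON-bases with $\be_1=\bx/\norm{\bx}$ and $\bex_1=\by/\norm{\by}$, so that the desired covariance appears (squared) as one term in the sum \eqref{th2a} and must therefore vanish. This avoids both the density of elementary tensors and the bilinearity/continuity extension you mention as an alternative. Your approach has the advantage of making explicit the identity $\Cov(\innprod{\bX,\bx},\innprod{\bY,\by})=\innprod{\E(\bX\tensor\bY)-\E\bX\tensor\E\bY,\,\bx\tensor\by}$, which is conceptually pleasant and connects directly to the covariance-operator viewpoint of \refR{RHS2}; the paper's basis-change trick is just a one-liner that sidesteps any such computation.
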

\begin{proof}
For \ref{CH2a}$\implies$\ref{CH2b}, and $\bx,\by\neq0$,
choose ON-bases such that
$\be_1=\bx/\norm{\bx}$ and $\bex_1=\by/\norm{\by}$.
The rest is immediate from \refT{TH2}.
\end{proof}

\citet{SRB2007} observed that for $\gb=2$ and real-valued variables,
$\DCii(\bX,\bY)=0$ does not characterize independence but 
instead that $\bX$ and $\bY$ are uncorrelated;
\refC{CH2} extends this to Hilbert spaces, 
in the sense \ref{CH2b} or \ref{CH2c} above.

\begin{remark}\label{RSJ271}
  $\E(\bX\tensor\bY)-\E\bX\tensor\E\bY
\in\HX\tensor\HY$ can be regarded as
  the covariance of the vector-valued variables $\bX$ and $\bY$;
\cf{} the general theory of higher moments of Banach space valued variables
in \cite{SJ271}, where the moment lives in a suitable tensor product.
(The general theory in \cite{SJ271} focusses on a single variable and
on the projective and injective
tensor products, but see \cite[Remarks 3.24 and 3.25]{SJ271}.
Since we assume separable spaces and $\E\norm{\bX}^2,\E\norm{\bY}^2<\infty$,
there are no problems with integrability; \cf{} \cite[Theorem 5.14]{SJ271}.)
\end{remark}

\begin{remark}\label{Rembedded}
Let $\cX$ and $\cY$ both be metric spaces such that $d^\gb$ is a semimetric of
negative type.
Then, see \refR{Rsemimetric}, 
there are embeddings  $\gf:\cX\to\HX$  and $\gf':\cY\to\HY$ 
into Hilbert spaces such that
\begin{align}\label{--}
  d_\cX(\bx_1,\bx_2)^\gb=\norm{\gf(\bx_1)-\gf(\bx_2)}^2,
&&&
  d_\cY(\by_1,\by_2)^\gb=\norm{\gf'(\by_1)-\gf'(\by_2)}^2.
\end{align}
It follows immediately that, for any of  Definitions \ref{D1}--\ref{D3},
\begin{align}
  \label{embedded}
\DCb(\bX,\bY)=\DCii\bigpar{\gf(\bX),\gf'(\bY)}.
\end{align}
Hence, $\DCb(\bX,\bY)$ can be interpreted as in
\refT{TH2} for the embedded variables, 
as shown (for $\gb=1$) in \cite[Proposition 3.7]{Lyons}.
\end{remark}

\begin{remark}\label{RHS2}
  The Hilbert space tensor product $\HX\tensor\HY$ can be identified with the
space of Hilbert--Schmidt operators $\HX\to\HY$
(see \eqref{HS} and the proof of \refL{LHTH}); then 
$\E(\bX\tensor\bY)-\E\bX\tensor\E\bY
=\E\sqpar{(\bX-\E\bX)\tensor(\bY-\E\bY)}$ 
corresponds to the operator $\bx\mapsto \E[\innprod{\bx,\bX-\E\bX}(\bY-\E\bY)]$,
known as the \emph{covariance operator}
(or \emph{cross-covariance operator} \cite{Baker}).
Thus \refT{TH2} says that
$\DCii(\bX,\bY)$ is 4 times the squared Hilbert--Schmidt norm of the
covariance operator. 

More generally,
if $\cX$ and $\cY$ both are metric spaces such that $d^\gb$ is a semimetric of
negative type, then 
\eqref{embedded}
shows that 
 $\DCb(\bX,\bY)$ equals $\DCii\xpar{\gf(\bX),\gf'(\bY)}$ 
for some embeddings $\gf:\cX\to\HX$
and $\gf':\cY\to\HY$ into Hilbert spaces.
Hence,  
$\DCb(\bX,\bY)$ equals 4 times the squared
Hilbert--Schmidt norm of the  covariance operator
corresponding to the embedded variables,
as
shown in
\cite[Theorem 24]{Sejdinovic2013};
this Hilbert--Schmidt norm (or its square)
is called the \emph{Hilbert--Schmidt independence criterion (HSIC)}
\cite{Gretton}, \cite[\S 3.3]{Sejdinovic2013}; 
\cf{} \refR{RHS}.  
\end{remark}

\begin{remark}
  If $\gb$ is an even integer larger than 2, we can similarly express $\DCb$
  in moments of $\bX$ and $\bY$, but the resulting formulas are complicated
  and do not seem to be of any interest.
For example, for $\gb=4$, for $\cX=\cY=\bbR$, and taking 
for simplicity $\bX=\bY$ with
$\E\bX=0$, 
  \begin{align}
\DC_4(\bX,\bX)=
    32 \E[ \bX^2] \E[ \bX^6] - 96 \E[ \bX^3]\E[ \bX^5]+ 68 (\E[ \bX^4])^2
\qquad\notag\\
-72 (\E[ \bX^2])^2\E[ \bX^4]
+ 64 \E[ \bX^2](\E[ \bX^3])^2 + 36(\E[ \bX^2])^4.
  \end{align}
We do not know any application or interesting properties of $\DCb$ with
$\gb>2$.
\end{remark}

\section{Optimality of moment conditions}\label{Sopt}

We have so far assumed the moment conditions stated in \refDs{D1}--\ref{D3};
these seem natural and convenient for applications. Nevertheless, it is of
interest to study whether they really are required for the definitions, and
what happens when we try to extend one of
the definitions to cases when the moment condition fails.
\refDs{D4} and \ref{D5} are stated without moment conditions, but we
similarly can ask when the results are finite and whether they agree with the other
definitions.

In this section,  we will give examples showing that the moment conditions in
\refDs{D1}--\ref{D3}  are optimal in general,
in the sense  that if we reduce the exponent in the moment condition,
then there exist  counterexamples where the definition
either yields an infinite value or is meaningless.
On the other hand, there are also cases where the moment conditions do not
hold but the definitions yield a finite value.
We explore these possibilities in the next section,
but our results are incomplete, and we leave
a number of (explicit or implicit) open problems.

In general, if we try to define $\DCb(\bX,\bY)$ by \eqref{d1} or
\eqref{d2} for some  $\bX$ and $\bY$, there are three possibilities:
\begin{dcenumerate}
\item \label{DC1}
The expression yields a finite value;
this may then be taken to be $\DCb(\bX,\bY)$.
This happens when all expectations 
in \eqref{d1} or \eqref{d2}, respectively, 
are finite.
(For \eqref{d1}, it also includes the trivial case when $\bX$ or $\bY$ is
degenerate, so $d(\bX_i,\bX_j)=0$ \as{} or  $d(\bY_i,\bY_j)=0$ \as{};
then all terms in \eqref{d1} are 0, if necessary interpreting
$0\cdot\infty=0$.)  
\item \label{DC2}
The expression makes sense as either $+\infty$ or $-\infty$.
We may then take it as defining
$\DCb(\bX,\bY)$, now with an infinite value in $\set{-\infty,\infty}$.
(We do not know whether $-\infty$ can happen, see \refP{P-}.)
Thus, at least one expectation is infinite.
Furthermore,
for \eqref{d1}, where all expectations are of non-negative variables and
thus defined in $[0,\infty]$, this means that either the two first 
expectations are finite, 
or the third expectation is;
for \eqref{d2} this means that 
one of $\E\bigsqpar{(\hXb\hYb)_+}$
and $\E\bigsqpar{(\hXb\hYb)_-}$ is finite and the other infinite, so
the expectation 
$\E\bigsqpar{\hXb\hYb}$
is defined as $+\infty$ or
$-\infty$. 

\item \label{DC3}
The expression
\eqref{d1} or \eqref{d2} is of the type $\infty-\infty$.
Then it is meaningless, and $\DCb(\bX,\bY)$ is undefined (by this definition).
\end{dcenumerate}

For \refD{D3}, we have the same possibilities as for \refD{D2}, but also the
complication that $\tXb$ and $\tYb$ have to be defined, see
\eqref{txb1}--\eqref{txb2}. We thus have another bad case:
\begin{dcenumerateq}
\item \label{DC4}
$\tXb$ or $\tYb$ is not defined.
Then $\tDCb(\bX,\bY)$ is undefined.
\end{dcenumerateq}

For Euclidean spaces, we also have \refD{D4},
and for Hilbert spaces we have \refD{D5}.
Since \eqref{d4} and \eqref{d5a}--\eqref{d5c} are
integrals of  non-negative functions, 
\refDs{D4} and \ref{D5} are always meaningful, but may
yield $+\infty$. In other words, we have only the cases \ref{DC1} and
\ref{DC2}.
Again we may ask when the definition yields a finite value, and when it
agrees with other definitions; 
in particular whether the moment conditions in \refT{T52} are
best possible.

The moment conditions assumed in \refDs{D1}--\ref{D3} guarantee, as seen in
\refT{T1}, that the good case \ref{DC1} occurs.
In the following subsections we investigate more generally when the cases
\ref{DC1}--\ref{DC4} occur, and whether the different definitions still
agree when more than one of them applies.

\subsection{Optimality in \refD{D1}}
We begin with  \refD{D1}, where we have a simple necessary and sufficient
condition.

\begin{theorem}\label{TD1?}
  \begin{thmenumerate}
  \item   \label{TD1?<}
If\/ 
$\E\norm{\bX}^\gb+\E\norm{\bY}^\gb+\E\sqpar{\norm{\bX}^\gb\norm{\bY}^\gb}<\infty$,
then all expectations in \eqref{d1} are finite, so \eqref{d1} defines
$\xDCb(\bX,\bY)$ as a finite number.

Moreover, in this case also the definitions \eqref{d2} and \eqref{d3} 
yield the same result,
\ie,
$\xDCb(\bX,\bY) = \hDCb(\bX,\bY)= \tDCb(\bX,\bY)$.

\item \label{TD1?oo}
Conversely, if\/ 
$\E\norm{\bX}^\gb+\E\norm{\bY}^\gb+\E\sqpar{\norm{\bX}^\gb\norm{\bY}^\gb}=\infty$,
and $\bX$ and $\bY$ are non-degenerate,
then \eqref{d1} is of the type $\infty-\infty$ and thus meaningless.    
  \end{thmenumerate}
\end{theorem}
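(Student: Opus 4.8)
The plan is to analyze the three expectations in \eqref{d1} one at a time. Write $A:=\E\bigsqpar{d(\bX_1,\bX_2)^\gb d(\bY_1,\bY_2)^\gb}$, $B:=\E d(\bX_1,\bX_2)^\gb\cdot\E d(\bY_1,\bY_2)^\gb$ and $C:=\E\bigsqpar{d(\bX_1,\bX_2)^\gb d(\bY_1,\bY_3)^\gb}$, so that $\xDCb(\bX,\bY)=A+B-2C$, with $A,B,C\ge0$ (possibly $+\infty$). Throughout I would use $(s+t)^\gb\le 2^\gb(s^\gb+t^\gb)$ for $s,t\ge0$, the triangle inequality $d(\bX_i,\bX_j)\le\norm{\bX_i}+\norm{\bX_j}$, and the equivalence (noted in the introduction) between finiteness of $\E\norm{\bX}^\gb$ and of $m_X:=\E d(\bX_1,\bX_2)^\gb$; set $m_Y:=\E d(\bY_1,\bY_2)^\gb$ likewise.

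For \ref{TD1?<}, the one fact I exploit repeatedly is the dependence structure of the copies: $\bX_i$ and $\bY_j$ are independent for $i\ne j$, while $(\bX_i,\bY_i)\eqd(\bX,\bY)$. Bounding a generic product $d(\bX_a,\bX_b)^\gb d(\bY_c,\bY_d)^\gb$ by $2^{2\gb}\sum_{\alpha\in\set{a,b}}\sum_{\beta\in\set{c,d}}\norm{\bX_\alpha}^\gb\norm{\bY_\beta}^\gb$ and taking expectations, each summand equals either $\E\bigsqpar{\norm{\bX}^\gb\norm{\bY}^\gb}$ (when $\alpha=\beta$) or $\E\norm{\bX}^\gb\,\E\norm{\bY}^\gb$ (when $\alpha\ne\beta$), both finite by hypothesis; applied to $A$, $B$, $C$ this gives $A,B,C<\infty$, so \eqref{d1} is finite. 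The same bound applied to the sixteen products making up $\hXb\hYb$ (see \eqref{hxb}) shows $\hXb\hYb\in L^1$, so $\E\bigsqpar{\hXb\hYb}$ expands by linearity; sorting the sixteen signed terms according to whether their two index-pairs coincide, are disjoint, or meet in exactly one point yields $4A+4B-8C$, i.e.\ $\tfrac14\E\bigsqpar{\hXb\hYb}=A+B-2C$ and thus $\hDCb=\xDCb$ (this is exactly the bookkeeping behind \refL{L0}, now justified term by term). For \refD{D3} I first note $\hXb\in L^1$ — immediate from \refL{L1} since $\bX_i$ is independent of $\bX_{i\pm1}$ — so $\tXb$ is well defined, and \eqref{txb2} gives $\abs{\tXb}\le c\bigpar{\norm{\bX_1}^\gb+\norm{\bX_2}^\gb+1}$; hence $\tXb\tYb\in L^1$ by the same splitting, and expanding $\E\bigsqpar{\tXb\tYb}$ (using $g(\bx):=\E d(\bx,\bX)^\gb$ and $\E g(\bX_1)=m_X$) reproduces the cross-terms $A$, $B$, $C$ and again collapses to $A+B-2C$.

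For \ref{TD1?oo} the task is to show that, under non-degeneracy, both the positive part $A+B$ and the negative part $2C$ equal $+\infty$, so that \eqref{d1} is a genuine $\infty-\infty$. I would prepare two lower bounds. First, from $d(\bY_1,\bY_2)^\gb\le 2^\gb\bigpar{d(\by,\bY_1)^\gb+d(\by,\bY_2)^\gb}$, taking expectations gives $m_Y\le 2^{\gb+1}h(\by)$ for every $\by$, where $h(\by):=\E d(\by,\bY)^\gb$; hence $h\ge\kappa_Y:=m_Y/2^{\gb+1}>0$, and symmetrically $g\ge\kappa_X>0$. Second, choosing $R,R'$ so large that $q:=\P\bigpar{\norm{\bX}\le R,\,\norm{\bY}\le R'}>0$, the reverse triangle inequality gives $d(\bx,\bX)\ge\norm{\bx}-R\ge\norm{\bx}/2$ on $\set{\norm{\bX}\le R}$ when $\norm{\bx}\ge2R$, so $g(\bx)\ge c\norm{\bx}^\gb$ and $G(\bx,\by):=\E\bigsqpar{d(\bx,\bX)^\gb d(\by,\bY)^\gb}\ge c\norm{\bx}^\gb\norm{\by}^\gb$ for $\norm{\bx},\norm{\by}$ large. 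Since everything is non-negative, Tonelli gives $C=\E\bigsqpar{g(\bX_1)h(\bY_1)}$ and $A=\E\bigsqpar{G(\bX_1,\bY_1)}$. Now split into the three branches that exhaust the hypothesis. If $\E\norm{\bX}^\gb=\infty$, then $B=m_Xm_Y=\infty$ and $C\ge\kappa_Y\E g(\bX_1)=\kappa_Y m_X=\infty$; symmetrically if $\E\norm{\bY}^\gb=\infty$. If instead both first moments are finite but $\E\bigsqpar{\norm{\bX}^\gb\norm{\bY}^\gb}=\infty$, then $B<\infty$ while the growth bounds give $A\ge c\,\E\bigsqpar{\norm{\bX}^\gb\norm{\bY}^\gb\indic{\norm{\bX}\ge2R,\,\norm{\bY}\ge2R'}}=\infty$ (the complementary part being finite since $\E\norm{\bX}^\gb,\E\norm{\bY}^\gb<\infty$), and likewise $C=\infty$. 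In every case $A+B=\infty$ and $C=\infty$, which is the claim.

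The routine part is the index bookkeeping in \ref{TD1?<}, which merely upgrades \refL{L0} from the $2\gb$-moment hypothesis to the present weaker one. The delicate point, and the place where non-degeneracy enters essentially, is the quantitative lower bounds in \ref{TD1?oo}: the uniform bound $h\ge\kappa_Y>0$ (which turns a divergent $\bX$-moment into $C=\infty$) and the growth bound $g(\bx)\gtrsim\norm{\bx}^\gb$ (which turns a divergent joint moment into $A=\infty$). I expect the main obstacle to be ensuring that the positive and negative parts diverge \emph{simultaneously} in all three branches of the moment hypothesis; if only one diverged, \eqref{d1} would be a well-defined $\pm\infty$ (case \ref{DC2}) rather than the meaningless $\infty-\infty$ (case \ref{DC3}) asserted.
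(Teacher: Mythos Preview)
Your proof is correct and follows essentially the same approach as the paper: for \ref{TD1?<} you carry out explicitly the bookkeeping that the paper omits (bounding products via the triangle inequality and then expanding $\hXb\hYb$ and $\tXb\tYb$ term by term), and for \ref{TD1?oo} your three-branch case split with the lower bounds $h\ge\kappa_Y$ and $g(\bx)\gtrsim\norm{\bx}^\gb$ is the same conditioning-and-reverse-triangle argument the paper sketches, just organised with named auxiliary functions rather than stated inline.
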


In particular, Case \ref{DC2}, \ie, a well-defined infinite value of $\xDCb$,
never occurs for \refD{D1}. 

\begin{proof}
  \pfitemref{TD1?<}
This follows by minor modifications of the argument used 
under slightly stronger assumptions in
\refS{S:intro} and \refL{L0}. Note that
the assumption implies that $\E[\norm{\bX_i}^\gb\norm{\bY_j}^\gb]<\infty$
for all $i$ and $j$, and
thus it follows from the triangle inequality
\eqref{dij} that all expectations in \eqref{d1} are finite.
Moreover, the assumption implies, using \eqref{dij} again, 
that $\hXb,\hYb\in L^1$, and thus $\tXb$ and
$\tYb$ are defined by \eqref{txb1}--\eqref{txb2},
and also that 
$\hXb\hYb\in L^1$ and 
$\tXb\tYb\in L^1$.
We omit the details.

\pfitemref{TD1?oo}
If $\E\norm{\bX}^\gb=\infty$, then
$\E d(\bx,\bX_2)^\gb=\infty$ for any $\bx$, and thus, by first conditioning
on $(\bX_1,\bY_1)$ and $(\bX_3,\bY_3)$ and integrating over $\bX_2$ only, 
both 
$\E\bigsqpar{d(\bX_1,\bX_2)^\gb}=\infty$
and
$\E\bigsqpar{d(\bX_1,\bX_2)^\gb d(\bY_1,\bY_3)^\gb}=\infty$;
hence,
since $\E\bigsqpar{d(\bY_1,\bY_2)^\gb}>0$, we see that \eqref{d1} is of 
the type $\infty-\infty$.

By symmetry, the same holds if $\E\norm{\bY}^\gb=\infty$.

Finally, suppose that $\E[\norm{\bX}^\ga\norm{\bY}^\ga]=\infty$.
By the cases just treated, we may assume that also
$\E\norm{\bX}^\gb<\infty$  and $\E\norm{\bY}^\gb<\infty$.
Then, using the triangle inequality and integrating only over the event 
$\set{\norm{\bX_2},\norm{\bY_2},\norm{\bY_3}\le M}$, for an $M$ so large that
  this event has positive probability, we see that both the first and last
  expectations in \eqref{d1} are $\infty$, and thus \eqref{d1} is
  $\infty-\infty$. 
\end{proof}

\begin{remark}
  If \refT{TD1?}\ref{TD1?<} applies and $\EDCb(\bX,\bY)$ or
  $\HDCb(\bX,\bY)$ is defined, \ie, if $\ga<2$ and the spaces are Euclidean
  spaces or Hilbert spaces, respectively, then it too equals $\xDCb(\bX,\bY)$.
This follows by \refT{TD1?} together with \refTs{TEH} and \ref{T52}.
\end{remark}

\begin{example}\label{ED10}
If $\bX$ and $\bY$ are
independent with
$\E \norm{\bX}^{\gb}<\infty$ and $\E \norm{\bY}^{\gb}<\infty$, then
\refT{TD1?}\ref{TD1?<} applies and
\eqref{d1} makes perfect sense;
\refDs{D1}--\ref{D3} all can be used, and all yield 0.
\end{example}

\begin{example}\label{ED1}
  Let $\bX$ be arbitrary with $\E\norm{\bX}^{2\gb}=\infty$, and let
  $\bY=\bX$.
Then, \refT{TD1?}\ref{TD1?oo} shows that $\xDCb(\bX,\bX)$
is of the type $\infty-\infty$ and does not make sense.
Consequently, in general, the moment condition in \refD{D1} is necessary.
(In particular, for every $(\bX,\bY)$ with $\bY=\bX$.)
\end{example}

\subsection{Optimality in \refD{D2}}

We have already seen in \refE{ED1} that the moment condition in \refD{D1} is
necessary, in a strong sense. We next show that the moment conditions in
\refDs{D2} and \ref{D3} also are optimal, in the
sense that if we reduce the
exponent, there are counterexamples. However, there are also examples
where these definitions yield finite values although the moment condition fails.

Consider first \refD{D2}.
$\hXb$ and $\hYb$ are always defined by \eqref{hxb}, so the question is
whether
$\E[\hXb\hYb]$ exists or not, and whether its value is finite of not.
Note, in particular, that $\hDCb(\bX,\bX):=\tfrac14\E\hXb^2$ always is
defined, although it may be $+\infty$; we have
\begin{align}\label{nils}
\hDCb(\bX,\bX)<\infty\iff \hXb\in L^2.  
\end{align}
Note also that, by rotational symmetry in the indices in \eqref{hxb}, 
$\hXb$ has a symmetric distribution.
Thus $\E\hXb=0$ whenever the expectation exists.

\begin{example}\label{Egb}
  Let  $\cX=\cY=\bbR$, and 
suppose
that $\bX\ge0$ with $\P(\bX=0)>0$.
On the event $\bX_3=\bX_4=0$,
we have
\begin{align}
-\hXb=\bX_1^\gb+\bX_2^\gb-|\bX_1-\bX_2|^\gb
\ge \minx{\bX_1^{\gb}}{\bX_2^\gb}.
\label{trias}
\end{align}
Hence, if $\E|\hXb|^2<\infty$,
then 
\begin{align}
  \infty 
&>
\E\bigsqpar{\bigpar{\minx{\bX_1^{\gb}}{\bX_2^\gb}}^2}
=
\E\bigsqpar{\minx{\bX_1^{2\gb}}{\bX_2^{2\gb}}}
\notag\\&
=
\intoo \P\bigsqpar{\minx{\bX_1^{2\gb}}{\bX_2^{2\gb}}>t}\dd t
=
\intoo \P\bigsqpar{\bX^{2\gb}>t}^2\dd t
\notag\\&
=
2\gb\intoo \P\bigsqpar{\bX>x}^2 x^{2\gb-1} \dd x.
\label{gemini}
\end{align}
If we choose $\bX$ such that, for $x\ge2$ say,
\begin{align}\label{twin}
\P(\bX>x)=x^{-\gb},  
\end{align}
then $\E|\bX|^\gam<\infty$ for every $\gam<\gb$, but the integral in
\eqref{gemini} diverges and thus $\E|\hXb|^2=\infty$;
hence \eqref{d2} yields $\hDCb(\bX,\bX)=\infty$ by \eqref{nils}.
(Case \ref{DC2}.)
Consequently, when $\gb\le2$,
the exponent $\gbx=\gb$ is optimal in \refD{D2}
(in order to yield a finite value).
\end{example}

\begin{example}\label{E2gb-2}
  Let $\gb>1$ and $\cX=\cY=\bbR$, and 
suppose, for simplicity, that $\bX\ge0$ with $\P(\bX=0)=\P(\bX=1)=1/4$.
On the event $\bX_2=\bX_3=0$, $\bX_4=1$,
we have
for some $c>0$, assuming that $\bX_1\ge2$, say,
\begin{align}
  \hXb=\bX_1^\gb-|\bX_1-1|^\gb+1
\ge c\bX_1^{\gb-1}.
\end{align}
Hence, for these values of $\bX_2,\bX_3,\bX_4$, we have
$\hXb\ge c\bX_1^{\gb-1}-C$. Consequently,
$\E|\hXb|^2<\infty \implies \E\bX^{2(\gb-1)}<\infty$.

We can choose $\bX$ as above such that
$\E \bX^\gam<\infty$ 
for every $\gam<2\gb-2$, 
but $\E \bX^{2\gb-2}=\infty$ and consequently
$\E|\hXb|^2=\infty$;
thus, \eqref{d2}  yields $\DCb(\bX,\bX)=\infty$.
(Case \ref{DC2}.)
Hence, when $\gb\ge2$, the exponent $\gbx=2\gb-2$ is optimal in \refD{D2}.
\end{example}

\begin{example}\label{ED2bad}
We have here given examples with $\E|\hXb|^2=\infty$, so that \eqref{d2}
gives $\hDCb(\bX,\bX)=+\infty$.

Similarly, \eqref{trias} and a calculation as in \eqref{gemini} show that
if, say, 
$\P(\bX>x)=x^{-\gb/2}$ for $x>2$, then  $\E|\hXb|=\infty$.
Since $\hXb$ has a symmetric distribution by \eqref{d2}, it follows that if 
$\bY$ is any non-degenerate random variable such that
$\bX$ and $\bY$ are independent, then the expectation in \eqref{d2} is of
the type $\infty-\infty$ and thus undefined
(Case  \ref{DC3} above); hence \refD{D2} cannot be
applied at all (even allowing $\pm\infty$ as a result).
\end{example}

\begin{example}\label{ERH2}
Let $\cX=\cY=\bbR$ and
consider the special (and rather exceptional) case $\gb=2$, \cf{} \refS{S2}.
Then $\hXii$ is given by \eqref{dim1}, and
it follows easily that
\begin{align}\label{b2b}
\hXii\in L^2 
\iff \E|\bX|^2<\infty, 
\end{align}
and that for $\bX=\bY$, \eqref{dc1} holds in the form
\begin{align}\label{dc1var}
\hDCii(\bX,\bX)=\tfrac14\E[\hXii^2] 
=4 \bigpar{\Var \bX}^2
\end{align}
for any $\bX$, where the expressions all are infinite when $\E|\bX|^2=\infty$.
This shows again that the condition of finite $\gbx$ moment in \refD{D2} 
cannot be improved when $\gb=2$,
if we want
$\DCii(\bX,\bX)$ to be finite.
Furthermore, if we take $\bY=\zeta\bX$ where
$\bX$ and $\zeta$ are independent with
$\E\bX=0$, $\E[\bX^2]=\infty$, 
$\zeta\in\set{\pm1}$ and $\E\zeta=0$,
then $\E[\hXii\hYii]$ 
is of the type
$\infty-\infty$;
hence,
even allowing infinite values,
$\hDCii(\bX,\bY)$ cannot be defined by \refD{D2} 
without
assuming second moments.
\end{example}

\subsection{Optimality in \refD{D3}}

We now turn to 
\refD{D3}.
As noted above, $\tXb$ is only defined for some $\bX$. If we use the
conditional expectation definition in \eqref{txb1}, then we have to require
$\hXb\in L^1$, \ie, $\E|\hXb|<\infty$.
On the other hand, the explicit formula \eqref{txb2} makes sense only if 
$\E d(\bX_1,\bX_2)^\gb<\infty$, or equivalently $\E\norm{\bX}^\gb<\infty$,
since otherwise also the conditional expectations in \eqref{txb2} are
$+\infty$ \as, and thus \eqref{txb2} is $\infty-\infty$.
Moreover, if $\E\norm{\bX}^\gb<\infty$, then $\hXb\in L^1$ by \eqref{hxb},
and \eqref{txb1} agrees with \eqref{txb2}. Hence we may take \eqref{txb1}
as the primary definition of $\tXb$, and say that $\tXb$ is defined when
$\hXb\in L^1$. This holds  in particular when $\E\norm{\bX}^\gb<\infty$,
and then \eqref{txb2} holds too, but note that \refL{L1} shows that 
$\E\norm{\bX}^{\gbx/2}<\infty$ suffices for $\hXb\in L^1$.

Hence, $\tXb$ is defined if and only if $\hXb\in L^1$,
and then $\tXb\in L^1$;
furthermore
\begin{align}\label{mikael}
  \E \tXb = \E \E\bigpar{\hXb\mid\bX_1,\bX_2}=\E\hXb=0.
\end{align}
Moreover, in this case, also
\begin{align}\label{gabriel}
  \E \bigpar{\tXb\mid \bX_1} 
= \E \bigpar{\E\bigpar{\hXb\mid\bX_1,\bX_2}\mid\bX_1}
=\E\bigpar{\hXb\mid \bX_1}=0,
\end{align}
since $\hXb$ has a symmetric distribution also when conditioned on $\bX_1$, 
by symmetry in \eqref{hxb}.

\begin{example}
  \label{ED3bad}
Recall that \refE{ED2bad} gives an example where $\hXb\notin L^1$;
hence, $\tXb$ is not defined and thus $\tDCb(\bX,\bX)$ is 
undefined. 
\end{example}

We note a general result relating $\tXb$ and $\hXb$.
By \eqref{txb1}, $\tXb$ is (\as)\ a function of $\bX_1$ and $\bX_2$;
let us (temporarily) write $\tXb$ as $\tXb(\bX_1,\bX_2)$, so that we can
substitute other $\bX_i$ as arguments.
The following lemma shows that $\hXb$ can be recovered from $\tXb$.
\begin{lemma}\label{Lht}
  Suppose that $\hXb\in L^1$. Then, \as,
  \begin{align}\label{lht}
    \hXb 
= \tXb(\bX_1,\bX_2) - \tXb(\bX_2,\bX_3) + \tXb(\bX_3,\bX_4) - \tXb(\bX_4,\bX_1).
  \end{align}
Consequently, for any $p\ge1$,
\begin{align}\label{lhtp}
\text{$\tXb$ exists and } \tXb\in L^p  
\iff
\hXb\in L^p.
\end{align}
\end{lemma}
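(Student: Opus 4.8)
The plan is to read \eqref{lht} as a telescoping identity for the symmetric, additively separable ``remainder'' $\rho(\bx_1,\bx_2):=d(\bx_1,\bx_2)^\gb-\tXb(\bx_1,\bx_2)$. To see the mechanism, suppose first the stronger condition $\E\norm{\bX}^\gb<\infty$, so that \eqref{txb2} applies and $\tXb(\bx_1,\bx_2)=d(\bx_1,\bx_2)^\gb-a(\bx_1)-a(\bx_2)+m$, where $a(\bx):=\E d(\bx,\bX)^\gb$ and $m:=\E d(\bX_1,\bX_2)^\gb$; thus $\rho(\bx_1,\bx_2)=a(\bx_1)+a(\bx_2)-m$. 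Substituting the four values $\tXb(\bX_i,\bX_{i+1})$ (indices \MOD4) into the \rhs{} of \eqref{lht}, the difference between $\hXb$ and that \rhs{} is $\sum_{i=1}^4(-1)^{i-1}\rho(\bX_i,\bX_{i+1})$; inserting the separable symmetric form, the contributions of $a$ and of $m$ cancel in this alternating cyclic sum, so the difference is $0$. The whole content of \eqref{lht} is therefore that $\rho$ is symmetric and additively separable.

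The main obstacle is that under the stated hypothesis $\hXb\in L^1$ we may have $\E\norm{\bX}^\gb=\infty$, in which case $a(\bx)\equiv\infty$ and $m=\infty$, formula \eqref{txb2} is unavailable, and the cancellation above is of the forbidden form $\infty-\infty$. To get around this I would argue directly from the conditional-expectation definition \eqref{txb1}. By Fubini, $\hXb\in L^1$ forces $\psib(\bx_1,\bx_2,\bX_3,\bX_4)\in L^1$ for \aex{} $(\bx_1,\bx_2)$, so that
\begin{align*}
\rho(\bx_1,\bx_2)=\E\bigsqpar{d(\bx_2,\bX_3)^\gb-d(\bX_3,\bX_4)^\gb+d(\bX_4,\bx_1)^\gb}
\end{align*}
is finite \aex. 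The key observation is that, although each of these three expectations is separately infinite, the difference $\rho(\bx_1,\bx_2)-\rho(\bx_1,\bx_2')$ equals $\E\bigsqpar{d(\bx_2,\bX_3)^\gb-d(\bx_2',\bX_3)^\gb}$, and this integrand is automatically in $L^1$, being the difference of two $L^1$ variables whose remaining terms cancel pointwise; hence the subtraction is legitimate and the result does \emph{not} depend on $\bx_1$. Fixing $\bx_2'$ at a reference point where $\rho$ is finite, this says exactly that $\rho(\bx_1,\bx_2)=u(\bx_1)+v(\bx_2)$ for \aex{} $(\bx_1,\bx_2)$. A measure-preserving swap $\bX_3\leftrightarrow\bX_4$ in the integral (using only that $d$ is symmetric) shows $\rho(\bx_1,\bx_2)=\rho(\bx_2,\bx_1)$, which forces $u-v$ to be \as{} constant.

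With this in hand the proof finishes quickly. Since $\hXb=\sum_{i=1}^4(-1)^{i-1}d(\bX_i,\bX_{i+1})^\gb$ and the \rhs{} of \eqref{lht} is $\sum_{i=1}^4(-1)^{i-1}\tXb(\bX_i,\bX_{i+1})$, their difference is $\sum_{i=1}^4(-1)^{i-1}\rho(\bX_i,\bX_{i+1})$; inserting $\rho(\bx_1,\bx_2)=u(\bx_1)+v(\bx_2)$ with $u-v$ constant, the alternating cyclic sum telescopes to $0$, giving \eqref{lht}. Finally \eqref{lhtp} is a direct consequence: if $\hXb\in L^p$ with $p\ge1$ then $\hXb\in L^1$, so $\tXb=\E(\hXb\mid\bX_1,\bX_2)$ exists, and conditional Jensen gives $\E|\tXb|^p\le\E\E\bigpar{|\hXb|^p\mid\bX_1,\bX_2}=\E|\hXb|^p<\infty$; conversely, if $\tXb$ exists and $\tXb\in L^p$, then each term $\tXb(\bX_i,\bX_{i+1})$ satisfies $(\bX_i,\bX_{i+1})\eqd(\bX_1,\bX_2)$ and so lies in $L^p$ with the same norm, whence \eqref{lht} together with Minkowski's inequality yields $\hXb\in L^p$.
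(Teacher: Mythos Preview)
Your argument is correct and genuinely different from the paper's. The paper proceeds by truncation: it inserts indicators $\IM_i:=\indic{\norm{\bX_i}\le M}$, computes explicitly that
\[
 \sumiv(-1)^{i-1}\E\bigpar{\IM_{i+2}\IM_{i+3}\hXb\mid\bX_i,\bX_{i+1}}=p_M^2\,\hXb
\]
(which is a finite-moment calculation, hence elementary), and then lets $M\to\infty$, using $\hXb\in L^1$ to pass the truncated conditional expectations to $\tXb(\bX_i,\bX_{i+1})$ in $L^1$. Your route instead exposes \emph{why} the identity holds: the obstruction $\rho:=d^\gb-\tXb$ is symmetric and additively separable, and such functions vanish in alternating cyclic sums. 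The separability step---subtracting $\rho(\bx_1,\bx_2')$ so that the ill-defined terms cancel pointwise before taking expectations---is the neat move that replaces the paper's truncation. What the paper's approach buys is routineness (it is a standard approximation template); what yours buys is a structural explanation and a slightly shorter argument. Both deductions of \eqref{lhtp} are identical. One small phrasing point: when you ``fix $\bx_2'$ at a reference point where $\rho$ is finite'' you mean, via Fubini, a $\bx_2'$ for which $\rho(\cdot,\bx_2')$ is \aex{} finite; it would be worth saying this explicitly.
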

\begin{proof}
  If $\E\norm{\bX}^\gb<\infty$, this is obvious from \eqref{txb2} and
  cancellations. In general, we use truncations.
Let, for $M>0$,
$\IM:=\indic{|\bX|\le M}$, 
$\IM_i:=\indic{|\bX_i|\le M}$, and let $p_M:=\E\IM=P\bigpar{|\bX_i|\le M}$.
Then,
\begin{multline}
  \begin{aligned}
&     \E\bigpar{\IM_3\IM_4\hXb\mid \bX_1,\bX_2}
\\&\qquad
=p_M^2 d(\bX_1,\bX_2)^\gb
-p_M\E_{\bX}\bigpar{\IM d(\bX_2,\bX)^\gb}
  \end{aligned}
\\
+ p_M^2 \E\bigpar{\IM_3\IM_4 d(\bX_3,\bX_4)^\gb}
-p_M\E_{\bX}\bigpar{\IM d(\bX_1,\bX)^\gb}
\end{multline}
and consequently, by rotational symmetry and cancellations,
interpreting all indices \MOD4,
\begin{align}\label{selma}
 \sumiv (-1)^{i-1} \E\bigpar{\IM_{i+2}\IM_{i+3}\hXb\mid \bX_i,\bX_{i+1}}
&=p_M^2 \sumiv (-1)^{i-1} d(\bX_i,\bX_{i+1})^\gb
\notag\\&
= p_M^2 \hXb.  
\end{align}
Since we assume $\hXb\in L^1$, we have $\IM_{i+2}\IM_{i+3}\hXb\lto\hXb$ as \Mtoo,
and thus 
\begin{align}\label{temla}
  \E\bigpar{\IM_{i+2}\IM_{i+3}\hXb\mid \bX_i,\bX_{i+1}}
\lto
  \E\bigpar{\hXb\mid \bX_i,\bX_{i+1}}
=\tXb(\bX_i,\bX_{i+1}).
\end{align}
Hence, as \Mtoo, the \lhs{} of \eqref{selma} converges in $L^1$ to the
\rhs{} of \eqref{lht}, while the \rhs{} of \eqref{selma} obviously converges
to $\hXb$. Hence, \eqref{lht} follows.

Finally, \eqref{lhtp} is an immediate consequence of \eqref{lht} and
\eqref{txb1}.
\end{proof}

In particular, this leads to the following for the case $\bY=\bX$.
Note that
$\tDCb(\bX,\bX):=\E\tXb^2$ is defined whenever $\tXb$ is, 
although it may be $+\infty$; 
\cf{}  $\hDCb(\bX,\bX)$ discussed above.

\begin{theorem}\label{TXX}
Let $\bX$ be a random variable in a metric space.
Then the following are equivalent:
  \begin{romenumerate}
  \item \label{TXXhDC}
$\hDCb(\bX,\bX)<\infty$.
  \item \label{TXXtDC}
$\tDCb(\bX,\bX)<\infty$ (which includes that $\tXb$ is defined).
\item \label{TXXh}
$\hXb\in L^2$.
\item \label{TXXt}
$\tXb$ is defined and $\tXb\in L^2$.
  \end{romenumerate}
Furthermore, if these hold, then
$\hDCb(\bX,\bX)=\tDCb(\bX,\bX)$.
\end{theorem}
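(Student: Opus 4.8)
The plan is to reduce the four-way equivalence to the two structural facts already established in this section---formula \eqref{nils} and \refL{Lht}---and then to prove the numerical identity $\hDCb(\bX,\bX)=\tDCb(\bX,\bX)$ by squaring the representation of $\hXb$ in \eqref{lht} and checking that all off-diagonal terms drop out.

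First I would settle the equivalences. The equivalence of \ref{TXXhDC} and \ref{TXXh} is immediate from \eqref{nils}, since $\hDCb(\bX,\bX)=\tfrac14\E\hXb^2$. The equivalence of \ref{TXXh} and \ref{TXXt} is exactly \eqref{lhtp} of \refL{Lht} with $p=2$. Finally \ref{TXXtDC} and \ref{TXXt} coincide essentially by definition: $\tDCb(\bX,\bX)=\E\tXb^2$ presupposes that $\tXb$ is defined (that is, $\hXb\in L^1$, which is exactly what \eqref{lht} requires), and this expectation is finite precisely when $\tXb\in L^2$. Thus all four conditions collapse to the single condition $\hXb\in L^2$.

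For the final identity I would assume this common condition, so that $\hXb,\tXb\in L^2$. Writing $T_{ij}:=\tXb(\bX_i,\bX_j)$ in the notation of \refL{Lht}, \eqref{lht} gives $\hXb=T_{12}-T_{23}+T_{34}-T_{41}$ almost surely, so that
\begin{align*}
\E\hXb^2=\E\bigsqpar{\bigpar{T_{12}-T_{23}+T_{34}-T_{41}}^2}.
\end{align*}
Expanding gives four diagonal terms $\E T_{ij}^2$ over the consecutive pairs $(12),(23),(34),(41)$, together with the signed cross terms $\E\bigsqpar{T_{ij}T_{kl}}$. By exchangeability of $(\bX_i)$ each diagonal term equals $\E\tXb^2$, contributing $4\E\tXb^2$. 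It remains to show each cross term vanishes. If the two pairs are disjoint ($\{T_{12},T_{34}\}$ and $\{T_{23},T_{41}\}$) the two factors are independent and each has mean $0$ by \eqref{mikael}, so the product has expectation $0$. If the two pairs share a single index $m$, then conditioning on $\bX_m$ makes the two factors conditionally independent---after fixing $\bX_m$ they are functions of two disjoint, hence independent, coordinates among the remaining $\bX_j$---so that $\E\bigpar{T_{ij}T_{kl}\mid\bX_m}=\E\bigpar{T_{ij}\mid\bX_m}\E\bigpar{T_{kl}\mid\bX_m}$; in each of the four such pairs the shared index appears as the \emph{first} argument of one of the two factors, so the corresponding conditional expectation is $0$ by \eqref{gabriel}, killing the term. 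Hence $\E\hXb^2=4\E\tXb^2$, i.e.\ $\hDCb(\bX,\bX)=\tfrac14\E\hXb^2=\E\tXb^2=\tDCb(\bX,\bX)$.

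The one genuinely delicate point, which I would write out with care, is the vanishing of the shared-index cross terms: it must be justified using only $\hXb\in L^2$ (which guarantees $\tXb$ is defined and makes the centering identities \eqref{mikael}--\eqref{gabriel} available), and \emph{not} any moment bound on $\norm{\bX}$, since the very purpose of the theorem is to characterize finiteness without assuming such a bound. Everything else is a routine expansion once \refL{Lht} and the centerings \eqref{mikael}--\eqref{gabriel} are invoked.
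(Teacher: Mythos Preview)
Your proposal is correct and follows essentially the same route as the paper's proof: the equivalences are reduced to \eqref{nils} and \refL{Lht}, and the identity $\hDCb(\bX,\bX)=\tDCb(\bX,\bX)$ is obtained by squaring \eqref{lht} and killing the off-diagonal terms via \eqref{mikael}--\eqref{gabriel}. Your treatment of the cross terms is slightly more explicit than the paper's (you separate the disjoint-pair case from the shared-index case, and you note that the shared index always sits in the first slot of one factor so that \eqref{gabriel} applies verbatim), but this is a matter of presentation rather than a different argument.
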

\begin{proof}
$\ref{TXXhDC}\iff\ref{TXXh}$:
This follows directly from the definition \eqref{d2}, as noted in
\eqref{nils}.  

$\ref{TXXtDC}\iff\ref{TXXt}$:
Follows similarly  from the definition \eqref{d3}.
 
$\ref{TXXh}\iff\ref{TXXt}$:
By \refL{Lht}.

Finally, suppose that \ref{TXXhDC}--\ref{TXXt} hold.
Use \eqref{lht} and expand $(\hXb)^2$ as a sum of
products. Since $\tXb\in L^2$, each product is in $L^1$,
so we may take their expectations separately. 
Furthermore,
\eqref{gabriel} implies that all off-diagonal terms such as 
$\E\sqpar{\tXb(\bX_1,\bX_2)\tXb(\bX_2,\bX_3)}=0$, and we obtain
\begin{align}
  \E\bigsqpar{\hXb^2}
=\sumiv \E\bigsqpar{\tXb(\bX_i,\bX_{i+1})^2}
=4  \E\bigsqpar{\tXb^2}.
\end{align}
Hence, $\hDCb(\bX,\bX)=  \frac{1}4\E\bigsqpar{\hXb^2}
=  \E\bigsqpar{\tXb^2}=\tDCb(\bX,\bX)$.
\end{proof}

\begin{corollary}  \label{Cmag}
  \begin{thmenumerate}
  \item     
If $\hXb\in L^1$, so $\tXb$ is defined, then
$\hDCb(\bX,\bX)=\tDCb(\bX,\bX)$ (finite or infinite).
\item 
If $\hXb\notin L^1$, then
$\hDCb(\bX,\bX)=\infty$ and
$\tDCb(\bX,\bX)$ is undefined.
  \end{thmenumerate}
\end{corollary}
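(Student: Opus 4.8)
The plan is to read both parts off from \refT{TXX} and \refL{Lht}, together with the elementary fact that on a probability space $L^2\subseteq L^1$ (by \CS, $\E\abs{\hXb}\le(\E\hXb^2)\qq$). The organizing observation is the distinction between being \emph{defined} and being \emph{finite}: the quantity $\hDCb(\bX,\bX)=\tfrac14\E\hXb^2$ is always defined as an element of $[0,\infty]$, whereas $\tDCb(\bX,\bX)=\E\tXb^2$ is defined only once $\tXb$ itself exists, which by the discussion preceding \refL{Lht} happens exactly when $\hXb\in L^1$. So the whole argument is a matter of matching up the finite/infinite and defined/undefined dichotomies correctly.

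For part~(i) I would assume $\hXb\in L^1$, so that $\tXb$ is defined. The key step is to apply \eqref{lhtp} of \refL{Lht} with $p=2$: since $\tXb$ exists, $\tXb\in L^2$ if and only if $\hXb\in L^2$. Combining this with \eqref{nils} and the definition \eqref{d3} gives the chain $\hDCb(\bX,\bX)<\infty\iff\hXb\in L^2\iff\tXb\in L^2\iff\tDCb(\bX,\bX)<\infty$. In the case where these equivalent conditions hold, \refT{TXX} already supplies the numerical equality $\hDCb(\bX,\bX)=\tDCb(\bX,\bX)$; in the complementary case both sides equal $+\infty$, so they agree trivially. Thus the asserted equality holds whether the common value is finite or infinite.

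For part~(ii) I would assume $\hXb\notin L^1$. Since $L^2\subseteq L^1$, this forces $\hXb\notin L^2$, i.e.\ $\E\hXb^2=\infty$, whence $\hDCb(\bX,\bX)=\infty$ by \eqref{nils}. At the same time, $\hXb\notin L^1$ means, by the criterion recalled just before \refL{Lht}, that $\tXb$ is not defined at all, so $\tDCb(\bX,\bX)$ is undefined.

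I do not anticipate a genuine obstacle here, since the statement is essentially bookkeeping on top of \refT{TXX} and \refL{Lht}. The one point that needs care is that \refT{TXX} delivers the equality only under its standing hypothesis that all four of its conditions hold (in particular $\hXb\in L^2$); consequently the infinite case of part~(i) cannot be obtained by quoting \refT{TXX} directly and must instead be routed through \eqref{lhtp} of \refL{Lht} to conclude that both expressions are simultaneously $+\infty$.
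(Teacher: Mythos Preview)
Your proof is correct and follows essentially the same route as the paper, which simply says the result ``follows from \refT{TXX}, considering the three cases $\hXb\in L^2$, $\hXb\in L^1\setminus L^2$ and $\hXb\notin L^1$ separately.'' Your expanded version spells out precisely this trichotomy, and your care in noting that the infinite subcase of~(i) must go through \eqref{lhtp} rather than the final clause of \refT{TXX} is apt.
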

\begin{proof}
Follows from \refT{TXX}, considering the three cases
$\hXb\in L^2$, $\hXb\in L^1\setminus L^2$ and $\hXb\notin L^1$
separately.
\end{proof}
If we only care about finite values and regard $\infty$ as 'undefined', we
thus see that $\tDCb(\bX,\bX)=\hDCb(\bX,\bX)$ for all $\bX$.

\begin{example}
  \label{Egbt}
Let $\gb\le2$ and let $\bX\in\bbR$ be as in \refE{Egb}; thus 
$\bX\ge0$ and \eqref{twin} holds. 
We have $\E|\bX|^\gam<\infty$ for every $\gam<\gb$, and in particular
$\E\abs{\bX}^{\gb/2}<\infty$; hence
\refL{L1}\ref{L1b} implies that $\hXb\in L^1$.
Thus, $\tXb$ exists, but $\hXb\notin L^2$ by \refE{Egb};
hence \refT{TXX} shows that $\tDCb(\bX,\bX)=\infty$.
Consequently, the exponent $\gbx=\gb$ is optimal in \refD{D3} when $\gb\le2$.
\end{example}

\begin{example}
  \label{Egb-2t}
Similarly,
let $\gb>2$ and let $\bX\in\bbR$ be as in \refE{E2gb-2}.
Then, 
$\E\abs{\bX}^\gam<\infty$ for every $\gam<2\gb-2$, and
in particular
$\E\abs{\bX}^{\gb-1}<\infty$; hence
\refL{L1}\ref{L1c} implies that $\hXb\in L^1$.
Thus, $\tXb$ exists, but $\hXb\notin L^2$ by \refE{E2gb-2};
hence \refT{TXX} shows that $\tDCb(\bX,\bX)=\infty$.
Consequently, the exponent $\gbx=2\gb-2$ is optimal in \refD{D3} when $\gb>2$.
\end{example}

Hence, the exponent $\gbx$ is optimal in \refD{D3} too.

\begin{example}\label{ED3bad3}
  Let $\gb=2$ and $\cX=\cY=\bbR$ as in \refE{ERH2}, and assume that
  $\E\bX=0$.
Then, by \eqref{dim1}, $\tXii$ exists and
$\tXii=-2\bX_1\bX_2$. 
Hence, we find directly the same conclusions 
for $\tDCb$ 
as found for $\hDCb$ in \refE{ERH2}.
In particular,
with $\bX$ and $\bY=\zeta\bX$ 
as in the final part of \refE{ERH2}, 
$\E\bigsqpar{\tXii\tYii}$ is of the type $\infty-\infty$ and thus
undefined. (Case \ref{DC3}.)
\end{example}

\subsection{Optimality for $\EDCb$ and $\HDCb$}

\refDs{D4} and \ref{D5} do not require any moment conditions;
if
$\cX$ and $\cY$ are Euclidean spaces or Hilbert spaces, respectively, then
$\EDCb(\bX,\bY)$ and $\HDCb(\bX,\bY)$ are always defined, but may be
$+\infty$.
(Recall also that \refT{TEH} shows that
for spaces where both are defined, we always have
$\EDCb(\bX,\bY)=\HDCb(\bX,\bY)$, finite or not.)
\refT{T52} shows that the moment condition
$\E\norm{\bX}^\gb,\E\norm{\bY}^\gb<\infty$ is sufficient to 
guarantee that 
$\EDCb(\bX,\bY)=\HDCb(\bX,\bY)$ is finite.
(Recall that this is the same moment condition as in \refDs{D2} and
\ref{D3}.)
The following example shows that the exponent $\gb$
in this moment condition is
optimal, even for random variables in $\bbR$.

\begin{example}\label{ED4}
  Let $0<\gb<2$, and let $\bX$ be a symmetric stable random variable in
  $\bbR$ with the \chf{} $\gf_\bX(t)=e^{-|t|^\gb}$.
Then $\E\abs{\bX}^\gb=\infty$, but $\E|\bX|^\gam<\infty$ for every
$\gam<\gb$.

Take $\bY=\bX$. Then, for $0\le t\le 1$ and $t\le u\le 2t$,
\begin{align}
  \gf_{\bX,\bX}(t,-u)-\gf_\bX(t)\gf_\bX(-u)
&=e^{-|t-u|^\gb}-e^{-|t|^\gb-|u|^\gb}
\notag\\&
\ge e^{-t^\gb}-e^{-2t^\gb}
\ge ct^\gb,
\end{align}
for some $c>0$.
Consequently, \eqref{d4} yields, 
changing the sign of $u$,
\begin{align}
  \EDCb(\bX,\bX)
\ge c\int_{t=0}^1 \int_{u=t}^{2t} t^{2\gb}\frac{\dd t\dd u}{t^{1+\gb}u^{1+\gb}}
=
c\int_{t=0}^1 \frac{t^{2\gb}}{t^{1+2\gb}}{\dd t}
=\infty.
\end{align}
Hence, using \refT{TEH},
 $\HDCb(\bX,\bX)=  \EDCb(\bX,\bX)=\infty$.
The condition in \refT{T52} 
on finite $\gb$ moments thus cannot be replaced by any lower moments
in order to guarantee finite values.
\end{example}

\section{Beyond the moment conditions}\label{Sbeyond}

We continue to investigate cases when the moment condition in
\refDs{D2}--\ref{D3} fails; now with the aim of obtaining positive results.

\subsection{A weaker condition}
We begin with $\hDCb$ in \refD{D2}, and
show first that
the counterexample in \refE{Egb} is optimal, at least when $\gb\le1$.
\begin{theorem}\label{TL}
Let $\cX$ be any separable metric space, and
  let $0<\gb\le1$.
If\/
\begin{align}\label{gemini2}
\intoo \P\bigsqpar{\norm{\bX}>x}^2 x^{2\gb-1} \dd x<\infty,  
\end{align}
then $\E|\hXb|^2<\infty$ and thus $\hDCb(\bX,\bX)<\infty$.
\end{theorem}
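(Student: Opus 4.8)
The plan is to reduce the claim to the tail integral in the hypothesis by invoking the sharp pointwise bound \refL{L1}\ref{L1a}, which is available precisely because $\gb\le1$.

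First I would square the estimate \eqref{l1a}. Setting $a_i:=\norm{\bX_i}^\gb\land\norm{\bX_{i+1}}^\gb$ (indices \MOD4), the \CSineq{} gives $\bigpar{\sumiv a_i}^2\le 4\sumiv a_i^2$, and since $(\min(s,t))^2=\min(s^2,t^2)$ for $s,t\ge0$ we obtain the pointwise bound
\begin{align*}
  |\hXb|^2\le 16\sumiv\bigpar{\norm{\bX_i}^{2\gb}\land\norm{\bX_{i+1}}^{2\gb}}.
\end{align*}
Taking expectations, each summand is the expected minimum of two \iid{} copies of $\norm{\bX}^{2\gb}$, since $\bX_i$ and $\bX_{i+1}$ are independent. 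The key identity is then the layer-cake formula together with independence,
\begin{align*}
  \E\bigsqpar{\norm{\bX_i}^{2\gb}\land\norm{\bX_{i+1}}^{2\gb}}
  =\intoo\P\bigsqpar{\norm{\bX}^{2\gb}>t}^2\dd t
  =2\gb\intoo\P\bigsqpar{\norm{\bX}>x}^2x^{2\gb-1}\dd x,
\end{align*}
the final equality by the substitution $t=x^{2\gb}$; this is exactly the computation already carried out in \eqref{gemini}. By the hypothesis \eqref{gemini2} the \rhs{} is finite, so summing the four equal terms yields $\E|\hXb|^2<\infty$, whence $\hDCb(\bX,\bX)=\tfrac14\E\hXb^2<\infty$ by \eqref{nils}.

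I do not expect a genuine obstacle: the argument is a direct application of \refL{L1}\ref{L1a} combined with the elementary identity $\E[\min(U,V)]=\intoo\P[U>t]^2\dd t$ for \iid{} nonnegative $U,V$. The only point to watch is the restriction $\gb\le1$, which is exactly what makes the sharp bound \eqref{l1a} valid; for $\gb>1$ one has only the weaker product bounds \eqref{l1b}--\eqref{l1c}, so a different route and presumably a different sufficient condition would be required.
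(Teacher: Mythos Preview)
Your proposal is correct and follows essentially the same route as the paper: both arguments reduce the hypothesis \eqref{gemini2} to $\norm{\bX_1}^\gb\land\norm{\bX_2}^\gb\in L^2$ via the layer-cake computation \eqref{gemini}, and then invoke \refL{L1}\ref{L1a} to conclude $\hXb\in L^2$. The paper phrases the last step as the triangle inequality in $L^2$ while you explicitly square and use the \CSineq, but these are the same idea.
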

\begin{proof}
  The calculation in \eqref{gemini} shows that \eqref{gemini2} is equivalent
  to
  \begin{align}\label{gemm}
\E\bigsqpar{\bigpar{\minx{\norm{\bX_1}^{\gb}}{\norm{\bX_2}^\gb}}^2}<\infty.    
  \end{align}
In other words, $\norm{\bX_1}^\gb\land\norm{\bX_2}^\gb\in L^2$.
Hence, \refL{L1}\ref{L1a} shows that 
$\hXb\in L^2$.
\end{proof}

\begin{remark}\label{RL1}
Let $0<\gb\le1$. Then,
\eg{} using \refL{Lvaa} below,
the argument in \refE{Egb} is easily extended to show that if $\cX=\bbR$, 
then \eqref{gemini2} is also necessary for $\E|\hXb|^2<\infty$.
Thus, at least for $\gb\le1$ and $\cX=\bbR$, \eqref{gemini2} is both
necessary and sufficient for $\hDCb(\bX,\bX)<\infty$. 
\end{remark}

\begin{remark}\label{RL2}
It is easy to see directly that
the condition \eqref{gemini2} follows from the condition
$\E\norm{\bX}^\gb<\infty$ in \refL{L2}. (We omit the details.)
Furthermore, \eqref{gemini2} is a strictly weaker condition, and thus,
 for $\gb\le1$,
\refT{TL} is stronger than \refL{L2}.
For example,
    if we instead of \eqref{twin} choose, for $x\ge e$,
  \begin{align}
    \P(\bX>x)=x^{-\gb}/\log x,
  \end{align}
then $\E\bX^\gb=\infty$,
but the integral in \eqref{gemini} converges and 
\refT{TL} shows that $\E|\hXb|^2<\infty$
and $\hDCb(\bX,\bX)<\infty$.
\end{remark}

Hence, although we have seen that
the exponent in the moment condition in \refD{D2} is 
best possible, \refT{TL} shows that 
for $\gb\le1$,
the moment condition can be  weakened 
to the condition \eqref{gemini2} (together with the same for $\bY$);
we postpone the details to \refT{TL2}, where we also extend it to
 $\EDCb$ and $\HDCb$.

Before proceeding,
we  note that
when $\gb\le1$, 
we may simplify the condition $\hXb\in L^2$ by the following lemma.

\begin{lemma}\label{Lvaa}
Let $p>0$. 
If $0<\gb\le1$,
then
  \begin{align}\label{lvaa}
    \hXb\in L^p\iff \norm{\bX_1}^\gb+\norm{\bX_2}^\gb-d(\bX_1,\bX_2)^\gb\in L^p.
  \end{align}
\end{lemma}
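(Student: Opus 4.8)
The plan is to use that for $0<\gb\le1$ the power $d^\gb$ is again a metric, and to rewrite $\hXb$ as a signed sum of four copies of the right-hand side of \eqref{lvaa}. Write $d_{ij}:=d(\bX_i,\bX_j)$ and $n_i:=\norm{\bX_i}^\gb$, and set
\[
  g_{ij}:=\norm{\bX_i}^\gb+\norm{\bX_j}^\gb-d(\bX_i,\bX_j)^\gb=n_i+n_j-d_{ij}^\gb .
\]
Since $d^\gb$ is a metric, the triangle inequality and the reverse triangle inequality, both applied with $\ox$ as the middle point, give $0\le g_{ij}\le 2\bigpar{n_i\land n_j}$. Substituting $d_{ij}^\gb=n_i+n_j-g_{ij}$ into the definition \eqref{hxb}, all the $n_i$ cancel and we obtain the identity
\[
  \hXb=-g_{12}+g_{23}-g_{34}+g_{41}.
\]
Here $g_{12}$ is exactly the variable on the right of \eqref{lvaa}, while $g_{23}$, $g_{34}$, $g_{41}$ have the same distribution as $g_{12}$, each being the same function evaluated at a pair of independent copies of $\bX$.

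The implication $\Longleftarrow$ is then immediate: if $g_{12}\in L^p$ then all four terms lie in $L^p$, and the elementary bound $|a+b+c+d|^p\le C_p\bigpar{|a|^p+|b|^p+|c|^p+|d|^p}$ together with the identity gives $\hXb\in L^p$.

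For $\Longrightarrow$ I would isolate $g_{12}$ by a truncation-plus-independence argument. From the identity and $g_{34}\ge0$ we have $g_{12}\le|\hXb|+g_{23}+g_{41}$. Let $A_M:=\set{n_3\le M,\ n_4\le M}$; this event depends only on $\bX_3,\bX_4$ and hence is independent of $(\bX_1,\bX_2)$, and $\P(A_M)=\P(n_1\le M)^2>0$ once $M$ is large. On $A_M$ the bound $g_{ij}\le2(n_i\land n_j)$ yields $g_{23},g_{41}\le2M$, so $g_{12}\le|\hXb|+4M$ there; consequently
\[
  \E\bigsqpar{g_{12}^p\indic{A_M}}\le C_p\bigpar{\E|\hXb|^p+(4M)^p}<\infty .
\]
Since $g_{12}$ is a function of $(\bX_1,\bX_2)$ and $\indic{A_M}$ of $(\bX_3,\bX_4)$, the two are independent, so $\E[g_{12}^p]\,\P(A_M)=\E\bigsqpar{g_{12}^p\indic{A_M}}<\infty$, and dividing by $\P(A_M)>0$ gives $g_{12}\in L^p$, i.e.\ the right-hand side of \eqref{lvaa} is in $L^p$.

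The only point requiring care is this last direction: a priori $g_{12}^p$ need not be integrable, so I must not invoke independence before knowing finiteness. The order above avoids any circularity --- I first bound the \emph{truncated} integral $\E[g_{12}^p\indic{A_M}]$ using the event $A_M$, and only afterwards use independence to pass to the full integral; this step remains valid even if $\E g_{12}^p$ were $+\infty$, since finiteness of the truncated integral forces finiteness of the whole.
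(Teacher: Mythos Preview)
Your proof is correct and follows essentially the same route as the paper: both directions rest on the identity $\hXb=-g_{12}+g_{23}-g_{34}+g_{41}$, and for $\Longrightarrow$ both arguments ``freeze'' $(\bX_3,\bX_4)$ to make the cross terms bounded and isolate $g_{12}$. The only cosmetic difference is that the paper conditions on $(\bX_3,\bX_4)$ and picks specific values where the conditional $p$-th moment is finite (then uses $|d(\bX,\bx_j)^\gb-\norm{\bX}^\gb|=O(1)$), whereas you truncate via the event $A_M$ and factor by independence --- your version stays entirely within the $g_{ij}$ identity and is arguably a touch cleaner.
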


Note that (for $\gb\le1$) the \rhs{} is non-negative by the triangle
inequality.

\begin{proof}
\pfitemx{$\implies$}  
Since $\E|\hXb|^p<\infty$, the conditional expectation 
$\E\bigpar{|\hXb|^p\mid X_3,X_4}\in L^1$.
Hence, there exist some $\bx_3$ and $\bx_4$ such that
$\E\bigpar{|\hXb|^p\mid X_3=\bx_3,X_4=\bx_4}\in L^1$, which by the
definition \eqref{hxb} means
\begin{align}
  d(\bX_1,\bX_2)^\gb -   d(\bX_1,\bx_4)^\gb -   d(\bX_2,\bx_3)^\gb\in L^p.
\end{align}
The triangle inequality yields, for $j=3,4$,
\begin{align}
  \bigabs{d(\bX,\bx_j)-\norm{\bX}}\le d(\bx_j,\ox)=O(1),
\end{align}
and thus, since $\gb\le1$,
\begin{align}
  \bigabs{d(\bX,\bx_j)^\gb-\norm{\bX}^\gb}
=O(1),
\end{align}
and the result follows.

\pfitemx{$\impliedby$}
Immediate (for any $\gb$), since the definition
\eqref{hxb} can be written
\begin{align}
  \hXb=
\sumiv(-1)^i\bigpar{\norm{\bX_i}^\gb+\norm{\bX_{i+1}}^\gb-d(\bX_i,\bX_{i+1})^\gb}.
\end{align}
\end{proof}

\begin{remark}
  We do not know (even for $\cX=\bbR$) 
whether \refL{Lvaa} holds also for $\gb>1$, and leave that  as an open problem.
(It holds, by a minor modification of the proof above, 
for $\gb>1$ under the additional assumption
$\E\norm{\bX}^{p(\gb-1)}<\infty$, but that seems less useful.)
\end{remark}

We next introduce a class of function spaces.

\subsection{Lorentz spaces}
The condition \eqref{gemini2} can be expressed as follows using
\emph{Lorentz spaces}, a generalization of the Lebesgue spaces $L^p$;
see \eg{} \cite{BS,BL}.
Let $\Xx$ be the \emph{decreasing rearrangement} of $\norm{\bX}$; this is the
(weakly) decreasing function $(0,1)\to\ooo$  defined by
\begin{align}
  \Xx(t):=\inf\bigset{x:\P(\norm{\bX}>x)\le t}.
\end{align}
In probabilistic terms, $\Xx$ is characterized as
the decreasing function on $(0,1)$ that, 
regarded as a random variable when $(0,1)$ is equipped with the Lebesgue
measure, has the same distribution as $\norm{\bX}$.

For a given probability space $(\gO,\cF,P)$, and $p,q\in(0,\infty)$,
the \emph{Lorentz} space $L^{p,q}(\gO,\cF,P)$ 
is defined as the linear space of all
real-valued random variables $\bX$ such that
\begin{align}
  \intoi \bigpar{t^{1/p}\Xx(t)}^q\frac{\ddx t}{t}<\infty.
\end{align}
It is well-known 
that $L^{p,p}=L^p$, and that if $q_1<q_2$ then $L^{p,q_1}\subset L^{p,q_2}$,
with strict inequality provided the probability space is large enough.

A standard Fubini argument shows that
\begin{align}
    \intoi \bigpar{t^{1/p}\Xx(t)}^q\frac{\ddx t}{t}
&=
q\intoi  \intoo \indic{\Xx(t)>x} x^{q-1} t^{q/p-1}\dd x\dd t
\notag\\&
=
q\intoi  \intoo \bigindic{\P(\norm{\bX}>x)>t} x^{q-1} t^{q/p-1}\dd x\dd t
\notag\\&
=
p  \intoo \P\bigsqpar{\norm{\bX}>x}^{q/p} x^{q-1}\dd x.
\end{align}
In particular, taking $p=\gb$ and $q=2\gb$, we see that
\eqref{gemini2} is equivalent to $\norm{\bX}\in L^{\gb,2\gb}$.

Consequently, 
for $\gb\le1$, 
\refT{TL} says that if 
$\norm{\bX}\in L^{\gb,2\gb}$, then $\hXb\in L^2$,
which weakens the condition
$\norm{\bX}\in L^{\gb}$ in \refL{L2} to 
$L^{\gb,2\gb}$.
Hence, we can extend the use of \refD{D2}; moreover, 
as shown below, also
\refDs{D3}, \ref{D4} and \ref{D5} yield the same result in this case.

\begin{theorem}\label{TL2}
Let $0<\gb\le1$, and assume that\/ $\norm{\bX},\norm{\bY}\in L^{\gb,2\gb}$.
Then:
\begin{romenumerate}
\item \label{TL2h}
\refD{D2} yields a finite value $\hDCb(\bX,\bY)$.  
\item \label{TL2t}
\refD{D3} yields a finite value $\tDCb(\bX,\bY)$,
and $\tDCb(\bX,\bY)=\hDCb(\bX,\bY)$.
\item \label{TL2E}
If $\cX$ and $\cY$ are Euclidean spaces, then
\refD{D4} yields a finite value $\EDCb(\bX,\bY)$,
and $\EDCb(\bX,\bY)=\hDCb(\bX,\bY)$.
\item \label{TL2H}
If $\cX$ and $\cY$ are Hilbert spaces, then
\refD{D5} yields a finite value $\HDCb(\bX,\bY)$,
and $\HDCb(\bX,\bY)=\hDCb(\bX,\bY)$.
\end{romenumerate}
Thus, the values that are defined are all equal (and finite).
\end{theorem}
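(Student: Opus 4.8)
The plan is to derive everything from part \ref{TL2h} by keeping, from \refT{TL}, not merely the membership $\hXb,\hYb\in L^2$ but the explicit $M$-free pointwise bound behind it. Since $\gb\le1$, \refL{L1}\ref{L1a} gives
\begin{align*}
|\hXb|\le U_X:=2\sumiv\bigpar{\norm{\bX_i}^\gb\land\norm{\bX_{i+1}}^\gb},
\end{align*}
and the computation \eqref{gemini}--\eqref{gemm} identifies $\norm{\bX}\in\Lgbgb$ with the statement $\norm{\bX_1}^\gb\land\norm{\bX_2}^\gb\in L^2$; hence $U_X\in L^2$ and $\hXb\in L^2$, and likewise $|\hYb|\le U_Y\in L^2$ with $\hYb\in L^2$. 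Part \ref{TL2h} is then immediate: by the \CSineq{} $\hXb\hYb\in L^1$, so $\hDCb(\bX,\bY)=\tfrac14\E[\hXb\hYb]$ is finite.

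For part \ref{TL2t}, note $\hXb\in L^2\subset L^1$, so $\tXb=\E(\hXb\mid\bX_1,\bX_2)$ is defined and lies in $L^2$ because conditional expectation is an $L^2$-contraction; similarly $\tYb\in L^2$, whence $\tDCb(\bX,\bY)=\E[\tXb\tYb]$ is finite. To identify the two values I would rerun the truncation argument of \refT{T1}: with $\bX\nn$ equal to $\bX$ when $\norm{\bX}\le n$ and $\ox$ otherwise (and $\bY\nn$ similarly), \refL{L0} applies to the bounded pair $(\bX\nn,\bY\nn)$ and gives $\hDCb(\bX\nn,\bY\nn)=\tDCb(\bX\nn,\bY\nn)$. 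The only thing to check is $L^2$-convergence $\hXb\nn\to\hXb$ and $\tXb\nn\to\tXb$ (and the same for $Y$); here, instead of \refL{L3} (which needs finite $\gbx$ moments), I would use that truncation never increases the norm, so $|\hXb\nn|\le U_X\in L^2$ uniformly in $n$, while $\hXb\nn\to\hXb$ a.s.; dominated convergence gives $\hXb\nn\to\hXb$ in $L^2$, and the contraction property (taking conditional expectations with respect to $\sigma(\bX_1,\bX_2)$ as in \refL{L3}) gives $\tXb\nn\to\tXb$ in $L^2$. Letting $\ntoo$ in $\hDCb(\bX\nn,\bY\nn)=\tDCb(\bX\nn,\bY\nn)$ yields $\hDCb(\bX,\bY)=\tDCb(\bX,\bY)$.

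For part \ref{TL2H}, I would begin from $\HDCb(\bX,\bY)=\lim_\Mtoo\tfrac14\E[\hXbM\hYbM]$, which holds with no moment condition by \refR{RD5M}. Since $\gb\le1$, the estimate above also applies to the smoothed increments: using \eqref{jg} and the triangle inequality for the metric $d^\gb$ exactly as in \refL{L1}\ref{L1a}, one obtains $|\hXbM|\le U_X$ with $U_X$ independent of $M$. (Crucially, the $\gb/2,\gb/2$ bound \eqref{l1bM} invoked in \refT{T52} does \emph{not} suffice here, since $\Lgbgb$ does not force $\E\norm{\bX}^\gb<\infty$, cf.\ \refR{RL2}.) Combining this uniform domination by $U_XU_Y\in L^1$ with the pointwise convergence $\hXbM\to\hXb$ from \eqref{jh}, dominated convergence gives $\E[\hXbM\hYbM]\to\E[\hXb\hYb]$, so $\HDCb(\bX,\bY)=\tfrac14\E[\hXb\hYb]=\hDCb(\bX,\bY)$. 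Finally part \ref{TL2E} follows at once, since a Euclidean space is a Hilbert space, so part \ref{TL2H} gives $\HDCb=\hDCb$, and \refT{TEH} gives $\EDCb=\HDCb$ with no moment assumption.

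The one genuinely new point, and the main obstacle, is replacing the moment-based $L^2$ control of \refLs{L2} and \ref{L3} (and of \refT{T52}) by the single uniform domination $|\hXb|,|\hXb\nn|,|\hXbM|\le U_X$. This works only because for $\gb\le1$ one has the $\land$-type bound of \refL{L1}\ref{L1a}, whose right-hand side lies in $L^2$ under the bare hypothesis $\norm{\bX}\in\Lgbgb$; checking that this bound survives both truncation and the $M$-smoothing (the latter via \eqref{jg}) is the crux of the argument.
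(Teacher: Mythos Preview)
Your argument is correct, and for parts \ref{TL2h}, \ref{TL2E}, \ref{TL2H} it follows the paper almost verbatim: the paper, too, invokes the $\land$-bound of \refL{L1}\ref{L1a} to get $\hXb,\hYb\in L^2$, and for \ref{TL2H} it uses exactly your uniform domination $|\hXbM|\le U_X$ (via \eqref{jg}) together with \refR{RD5M} and dominated convergence.

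The one genuine difference is in part \ref{TL2t}. You prove $\hDCb=\tDCb$ by truncation: bound $|\hXb\nn|$ pointwise by $U_X$ (using $\norm{\bX_i\nn}\le\norm{\bX_i}$), pass to the limit by dominated convergence, and inherit equality from the bounded case via \refL{L0}. The paper instead proves a standalone result, \refT{TD3a}, which shows directly that $\hXb,\hYb\in L^2$ already forces $\hDCb(\bX,\bY)=\tDCb(\bX,\bY)$; its proof expands $\hXb\hYb$ into sixteen $\tXb$-terms using the reconstruction formula \eqref{lht} (\refL{Lht}) and kills the off-diagonal terms with \eqref{gabriel}. Your route is more self-contained---it avoids \refL{Lht} entirely and reuses only the machinery already needed for \ref{TL2h} and \ref{TL2H}---while the paper's route isolates a reusable statement (\refT{TD3a}) that applies whenever $\hXb,\hYb\in L^2$, regardless of why that holds.
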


The proof is postponed to the following subsections.

Note also that by \refR{RL1}, the Lorentz space $L^{\gb,2\gb}$ is optimal in
the strong sense that, for $\gb\le1$ and $\cX=\bbR$,
\begin{align}\label{ql}
\hDCb(\bX,\bX)<\infty
\iff
  \hXb\in L^2 \iff \norm{\bX}\in L^{\gb,2\gb}.
\end{align}

The proofs of the results above assume $\ga\le1$. 
We leave the case $\ga>1$ as open problems.
For example:

\begin{problem}\label{P>1}
  For $\gb>1$, what is the optimal Lorentz space condition that guarantees
  $\E|\hXb|^2<\infty$ and thus $\hDCb(\bX,\bX)<\infty$?
\end{problem}

By \refT{TXX}, the answer for $\tDCb(\bX,\bX)$ is the same.

\begin{remark}
  \label{R2b}
\refE{ERH2} shows that in the special case $\gb=2$, 
the condition $\norm{\bX}\in L^2$ in \refD{D2} cannot be improved; 
it is actually
necessary for $\hXb\in L^2$ and $\hDCii(\bX,\bX)<\infty$
in the case $\cX=\bbR$.
Hence, for $\gb=2$, the answer to \refP{P>1}  is
$L^2=L^{2,2}$.

A naive interpolation with \eqref{ql} yields the conjecture that for
$1<\gb<2$, the answer  is $L^{\gb,2}$. 
\end{remark}

\begin{remark}\label{Rql}
  The equivalence \eqref{ql} does not hold for all metric
spaces $\cX$, not even for $\gb=1$. For a counterexample, let $\cX=\ell^1$
with the standard  basis $(\be_n)_1^\infty$, let $0<\gam\le1/2$, and let $N$ be
an integer-valued 
  random variable with 
$\P(N=n)=p_n:=cn^{-1-\gam}$, $n\ge1$, 
where $c$ 
is a normalization constant.
Finally, let $\bX:=N\qq\be_N$. It is easily seen that, with $\bX_i$ defined
in the same way by $N_i$,
\begin{align}\label{goa}
 \hX\le 2\sumiv N_i\qq\indic{N_i=N_{i+1}}, 
\end{align}
and thus, using \CS's (or Minkowski's) inequality,
\begin{align}\label{gob}
  \E\hX^2 \le C \E \bigsqpar{N_1\indic{N_1=N_2}}
= C\sumn np_n^2
= C\sumn n^{-1-2\gam}
<\infty,
\end{align}
while for $x\ge1$,
\begin{align}\label{goc}
  \P\bigpar{\norm{\bX}>x}=\P(N>x^2)
=\sum_{n>x^2}cn^{-1-\gam}
\ge cx^{-2\gam}\ge cx\qw,
\end{align}
so \eqref{gemini2} fails, and thus $\norm{\bX}\notin L^{1,2}$.
\end{remark}

\begin{problem}
  Does the equivalence \eqref{ql} hold in Euclidean spaces?
In infinite-dimensional Hilbert spaces?
\end{problem}

We have not investigated whether the results on continuity and consistency
in \refS{Scon} can be extended (for $\gb\le1$) by replacing the moment
conditions with the corresponding Lorentz space condition. In particular:

\begin{problem}\label{PLcons}
Let $\gb\le1$.
 Does \refT{Tcons} hold if the moment condition is replaced by $\bX,\bY\in
 L^{\gb,2\gb}$? 
\end{problem}

\subsection{More on $\hDCb$ and $\tDCb$}

\refT{TXX} considers only the case $\bX=\bY$.
We do not know whether it extends to $\DCb(\bX,\bY)$ in general, without
further conditions. We give a partial result.

\begin{theorem}\label{TD3a}
  Suppose that $\hXb,\hYb\in L^1$, so $\tXb$ and $\tYb$ exist.
Suppose further that
$\tXb\tYb\in L^1$ and 
$\tXb(\bX_1,\bX_2)\tYb(\bY_1,\bY_3)\in L^1$.
Then $\hXb\hYb\in L^1$, and
$\hDCb(\bX,\bY)=\tDCb(\bX,\bY)$; futhermore, this value is finite.

In particular, this holds if $\hXb,\hYb\in L^2$.
\end{theorem}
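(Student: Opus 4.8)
The plan is to expand both $\hXb$ and $\hYb$ via the representation \eqref{lht} of \refL{Lht} and then analyse the product term by term. Writing $\hXb=\sumiv(-1)^{i-1}\tXb(\bX_i,\bX_{i+1})$ and $\hYb=\sumiv(-1)^{j-1}\tYb(\bY_j,\bY_{j+1})$ (indices \MOD4), we obtain
\begin{align*}
\hXb\hYb=\sum_{i,j=1}^4(-1)^{i+j}\tXb(\bX_i,\bX_{i+1})\tYb(\bY_j,\bY_{j+1}),
\end{align*}
a sum of $16$ terms. I would classify each term by the overlap of the index pairs $\set{i,i+1}$ and $\set{j,j+1}$: they coincide (when $j=i$, giving $4$ terms), share exactly one index (when $j=i\pm1$, giving $8$ terms), or are disjoint (when $j=i+2$, giving $4$ terms). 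Since the pairs $(\bX_k,\bY_k)$ are \iid, the joint distribution of a given term depends only on this overlap pattern.

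First I would establish integrability. For a diagonal term ($j=i$) the two copies involved give $\tXb(\bX_i,\bX_{i+1})\tYb(\bY_i,\bY_{i+1})\eqd\tXb\tYb$, which lies in $L^1$ by the first hypothesis. For a term sharing a single index $k$, the three copies involved are distinct, and with $m\in\set{i,i+1}\setminus\set{k}$ and $l\in\set{j,j+1}\setminus\set{k}$ (note $m\neq l$) we get $\tXb(\bX_k,\bX_m)\tYb(\bY_k,\bY_l)\eqd\tXb(\bX_1,\bX_2)\tYb(\bY_1,\bY_3)$, which lies in $L^1$ by the second hypothesis. For a disjoint term the factor $\tXb(\bX_i,\bX_{i+1})$—which is in $L^1$ since $\tXb\in L^1$ by \eqref{lhtp}—is independent of $\tYb(\bY_j,\bY_{j+1})$, so the product is in $L^1$. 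Hence $\hXb\hYb\in L^1$.

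Next I would compute the expectations. A diagonal term has $\E\bigsqpar{\tXb(\bX_i,\bX_{i+1})\tYb(\bY_i,\bY_{i+1})}=\E\bigsqpar{\tXb\tYb}=\tDCb(\bX,\bY)$ and sign $(-1)^{i+j}=(-1)^{2i}=1$, so the four diagonal terms contribute $4\tDCb(\bX,\bY)$. For a term sharing the single index $k$ I would condition on the shared copy $(\bX_k,\bY_k)$: given it, the two factors depend on the conditionally independent variables $\bX_m$ and $\bY_l$, so the conditional expectation factorises, and since $k$ is an argument of $\tXb$, \eqref{gabriel} together with the symmetry of $\tXb$ in its two arguments gives $\E\bigpar{\tXb(\bX_k,\bX_m)\mid\bX_k,\bY_k}=0$; hence every such term has expectation $0$. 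For a disjoint term the two factors are independent, so by \eqref{mikael} its expectation is $\E\tXb\cdot\E\tYb=0$. Summing yields $\E\bigsqpar{\hXb\hYb}=4\tDCb(\bX,\bY)$, whence $\hDCb(\bX,\bY)=\tfrac14\E\bigsqpar{\hXb\hYb}=\tDCb(\bX,\bY)$, finite by the first hypothesis.

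Finally, for the ``in particular'' clause, if $\hXb,\hYb\in L^2$ then $\tXb,\tYb\in L^2$ by \eqref{lhtp}; the \CSineq{} gives $\tXb\tYb\in L^1$, and since $\tYb(\bY_1,\bY_3)\eqd\tYb\in L^2$ it also gives $\tXb(\bX_1,\bX_2)\tYb(\bY_1,\bY_3)\in L^1$, so both hypotheses hold. The step I expect to be the main obstacle is the vanishing of the cross terms: it requires carefully tracking the dependence structure of the paired copies $(\bX_k,\bY_k)$—in particular that conditioning on the one shared copy makes the two factors conditionally independent—and justifying the conditional-expectation manipulations, which is precisely what the two $L^1$ hypotheses are there to license.
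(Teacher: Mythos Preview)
Your proposal is correct and follows essentially the same approach as the paper: expand $\hXb$ and $\hYb$ via \refL{Lht}, classify the $16$ cross terms by index overlap, verify each is in $L^1$ from the hypotheses (plus independence for the disjoint case), and use \eqref{gabriel} and \eqref{mikael} to kill the off-diagonal expectations. Your treatment is in fact more explicit than the paper's---you spell out the conditional-independence argument for the single-overlap terms and the use of the symmetry of $\tXb$ in its two arguments, where the paper simply says ``\eqref{gabriel} implies that \eg{} $\E\sqpar{\tXb(\bX_1,\bX_2)\tYb(\bY_1,\bY_3)}=0$''---but the underlying idea is identical.
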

\begin{proof}
This is similar to the proof of \refT{TXX}.
  We have $\tXb,\tYb\in L^1$ by \eqref{txb1}, and thus 
$\tXb(\bX_1,\bX_2)\tYb(\bY_3,\bY_4)\in L^1$ by independence.
Express $\hXb$ and $\hYb$ by \eqref{lht} and expand 
$\hXb\hYb$ as a sum of 16 terms. 
By the assumptions (and symmetry), every term is in $L^1$, so we
may take their expectations separately. 
Furthermore,
\eqref{gabriel} implies that \eg{}
$\E\sqpar{\tXb(\bX_1,\bX_2)\tYb(\bY_1,\bY_3)}=0$, and we obtain
\begin{align}
  \E\bigsqpar{\hXb\hYb}
=\sumiv \E\bigsqpar{\tXb(\bX_i,\bX_{i+1})\tYb(\bY_i,\bY_{i+1})}
=4  \E\bigsqpar{\tXb\tYb}.
\end{align}

If $\hXb,\hYb\in L^2$, then $\tXb,\tYb\in L^2$ and the assumptions above
follow by the \CSineq.
\end{proof}

\begin{proof}[Proof of \refT{TL2}\ref{TL2h}\ref{TL2t}]
By the comments before \refT{TL2}, the assumptions imply $\hXb,\hYb\in L^2$,
and thus \refT{TD3a} shows \ref{TL2h} and \ref{TL2t}.
\end{proof}

\begin{problem}\label{P=}  
Let either $\cX$ and $\cY$ be arbitrary, or consider only $\cX=\cY=\bbR$.
\begin{romenumerate}
  
\item 
Is it true for arbitrary random $\bX\in \cX$ and $\bY\in\cY$
that 
$\hDCb(\bX,\bY)$ is defined and finite 
$\iff$
$\tDCb(\bX,\bY)$ is defined and finite?

\item 
If this holds, is furthermore always
$\hDCb(\bX,\bY)=\tDCb(\bX,\bY)$?
\end{romenumerate}
\end{problem}

\subsection{More on $\EDCb$ and $\HDCb$}
 Consider now the case of Euclidean or, more generally, Hilbert spaces and 
\refDs{D4} and \ref{D5}.
We complete the proof of \refT{TL2}; recall that this assumes $\gb\le1$.

\begin{proof}[Proof of \refT{TL2}\ref{TL2E}\ref{TL2H}]
\pfitemref{TL2H}
This follows by essentially the same proof as for \refT{T52}.
  As noted in \refR{RD5M}, \eqref{jcg} holds without any moment condition.
Moreover, as said in the proof of \refT{T52}, \refL{L1} holds for $\hXbM$
defined in \eqref{jfb2}
too, uniformly in $M$; we now use \refL{L1}\ref{L1a},
and denote the \rhs{} by 
$\Xxxl$.
Hence, $|\hXbM|\le\Xxxl$, and similarly
$|\hYbM|\le\Yxxl$.

As noted above, 
$X\in\Lgbgb$ is equivalent to \eqref{gemini2} and to \eqref{gemm}.
Consequently, $\Xxxl\in L^2$ and, similarly, $\Yxxl\in L^2$.
Hence, $\Xxxl\Yxxl\in L^1$ and dominated convergence applies to \eqref{jcg},
just as in the proof of \refT{T52}, yielding 
$\HDCb(\bX,\bY)=\hDCb(\bX,\bY)<\infty$.

\pfitemref{TL2E}
\refT{TEH} shows the general equality
$\EDCb(\bX,\bY)=\HDCb(\bX,\bY)$, and thus
\ref{TL2E} follows from \ref{TL2H}.

This completes the proof of \refT{TL2}.
\end{proof}

\begin{problem}\label{P>1H}
  For $1<\gb<2$, what is the optimal Lorentz space condition that guarantees
$\HDCb(\bX,\bX)<\infty$ (for variables in a Hilbert space)?
Does this also imply $\HDCb(\bX,\bX)=\hDCb(\bX,\bX)$?
Does this condition imply $\HDCb(\bX,\bY)=\hDCb(\bX,\bY)$ for two variables
$\bX$ and $\bY$?
\end{problem}

\begin{problem}\label{P=H}  
Let either $\cX$ and $\cY$ be arbitrary Hilbert spaces, 
or consider only $\cX=\cY=\bbR$. Let $0<\ga<2$.
\begin{romenumerate}  
\item 
Is it true for arbitrary random $\bX\in \cX$ and $\bY\in\cY$
that 
$\hDCb(\bX,\bY)$ is defined and finite 
$\iff$
$\HDCb(\bX,\bY)$ is finite?

\item 
If this holds, is furthermore always
$\hDCb(\bX,\bY)=\HDCb(\bX,\bY)$?
\end{romenumerate}
\end{problem}

\subsection{Hilbert spaces, $\gb=2$}\label{SSH2}

Consider now the case when $\cX=\HX$ and $\cY=\HY$ are Hilbert spaces, 
as in the preceding subsection, but
take $\gb=2$.
Then $\HDCb(\bX,\bY)$ is not defined, so we consider
$\hDCii(\bX,\bY)$ and $\tDCii(\bX,\bY)$.
In \refS{S2}, we did this assuming second moments; we now remove that
assumption and generalise the results.
(This is partly for its own sake, but mainly for the application in the
next subsection.)

In this subsection,
expectations $\E\bX$ of Hilbert space valued random variables are always
interpreted in Pettis sense, see \refApp{ABP}. (This is sometimes said
explicitly for emphasis.) We use some technical results stated and proved in
\refApp{ABP}.

Recall that 
$\hXii=2\innprod{\bX_1-\bX_3,\bX_4-\bX_2}$
by
\eqref{perseus}, for any $\bX$.
We next
show that \eqref{antigone} holds under weaker conditions than assumed in
\refS{S2}.
\begin{lemma}\label{Lanti}
  Let $\cX=\cH$ be a Hilbert space.
If $\tXii$ exists, then
$\E\bX$ exists in Pettis sense, and
\begin{align}\label{antigon}
      \tXii
=-2\innprod{\bX_1-\E\bX,\bX_2-\E\bX}.
\end{align}
\end{lemma}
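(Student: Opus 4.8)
The plan is to read the hypothesis ``$\tXii$ exists'' as $\hXii\in L^1$ (by the discussion around \eqref{txb1}), to recall from \eqref{perseus} that $\hXii=2\innprod{\bX_1-\bX_3,\bX_4-\bX_2}$, and to prove the two assertions in turn: first that $\E\bX$ exists in Pettis sense, and then the identity \eqref{antigon}. The formula itself is just the unconditional computation behind \eqref{antigone}: conditioning \eqref{txb1} on $(\bX_1,\bX_2)$ and using that $\bX_3,\bX_4$ are independent of $(\bX_1,\bX_2)$ gives $\tXii=2\E\bigpar{\innprod{\bX_1-\bX_3,\bX_4-\bX_2}\mid\bX_1,\bX_2}$; expanding into the four cross terms $\innprod{\bX_i,\bX_j}$ and pulling the Pettis mean through each inner product yields $-2\innprod{\bX_1-\E\bX,\bX_2-\E\bX}$. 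Carrying this out rigorously needs three things: (a) existence of $\E\bX$ in Pettis sense; (b) $\innprod{\bX_i,\bX_j}\in L^1$ for independent copies, so the four terms may be separated; and (c) the interchanges $\E\innprod{\bx,\bX_4}=\innprod{\bx,\E\bX}$ and $\E\innprod{\bX_3,\bX_4}=\innprod{\E\bX,\E\bX}$, which are standard Pettis-integral facts (the latter for two independent Pettis-integrable variables) and which I would take from \refApp{ABP}. Once these are available, the term-by-term computation above delivers \eqref{antigon}.

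The heart of the matter is (a), which I would reduce to the scalar integrability statement that $\E\bigabs{\innprod{\bX,\bz}}<\infty$ for every $\bz\in\cH$; given this, the closed-graph argument in \refApp{ABP} produces the representing vector $\E\bX\in\cH$. Note that $\E\norm{\bX}<\infty$ is \emph{not} available here (this is the essential difference from \refL{L2}): one can exhibit $\bX$ with $\hXii\in L^1$ but $\E\norm{\bX}=\infty$, so the Bochner integral need not exist and Pettis is genuinely needed. To prove scalar integrability I would fix $\bz$ with $\norm{\bz}=1$, split each copy as $\bX_i^\perp:=\bX_i-\innprod{\bX_i,\bz}\bz$, and write
\begin{align*}
\tfrac12\hXii=\innprod{\bX_1-\bX_3,\bX_4-\bX_2}=A+B,
\end{align*}
where $A:=\innprod{\bX_1-\bX_3,\bz}\innprod{\bX_4-\bX_2,\bz}$ is the $\bz$-part and $B:=\innprod{\bX_1^\perp-\bX_3^\perp,\bX_4^\perp-\bX_2^\perp}$ is the orthogonal part.

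The key device is a sign trick. Since $(A+B)\sgn A=\abs{A}+B\sgn A\le\abs{A+B}$ pointwise, with $\sgn A=\sgn\innprod{\bX_1-\bX_3,\bz}\,\sgn\innprod{\bX_4-\bX_2,\bz}$, I get $\E\abs{A}+\E\bigsqpar{B\,\sgn A}\le\tfrac12\E\abs{\hXii}<\infty$. The crucial point is that $\E\bigsqpar{B\,\sgn A}\ge0$: the factor of $B\sgn A$ built from the index set $\set{1,3}$ is independent of the factor from $\set{4,2}$, and the two are identically distributed, so expanding $B$ in an ON-basis $(\bu_k)$ of $\bz^\perp$ turns $\E\sqpar{B\,\sgn A}$ into a sum of squares,
\begin{align*}
\E\bigsqpar{B\,\sgn A}=\sum_k\Bigpar{\E\bigsqpar{\innprod{\bX_1^\perp-\bX_3^\perp,\bu_k}\,\sgn\innprod{\bX_1-\bX_3,\bz}}}^2\ge0.
\end{align*}
Hence $\E\abs{A}\le\tfrac12\E\abs{\hXii}<\infty$, and since $A$ factors over the independent index sets $\set{1,3}$ and $\set{4,2}$, $\E\abs{A}=\bigpar{\E\abs{\innprod{\bX_1-\bX_3,\bz}}}^2$; thus $\E\abs{\innprod{\bX_1-\bX_3,\bz}}<\infty$, and a weak symmetrization inequality (comparing with a median) upgrades this to $\E\abs{\innprod{\bX,\bz}}<\infty$, as required, for every $\bz$.

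The step I expect to be the main obstacle is making $\E\sqpar{B\,\sgn A}\ge0$ and the separation of expectations fully rigorous in infinite dimensions, since $B$ need not lie in $L^1$ a priori and the coordinatewise expectations above formally require scalar integrability in the very directions $\bu_k$ that one is trying to control. I would circumvent this by first running the entire argument for the finite-dimensional projections $\Pi_N\bX$, where $B$ is an honest finite sum and the sum-of-squares identity is immediate, obtaining $\E\abs{A}\le\tfrac12\E\bigabs{\innprod{\Pi_N(\bX_1-\bX_3),\Pi_N(\bX_4-\bX_2)}}$, and then passing to the limit with the integrability and convergence lemmas of \refApp{ABP}. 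The nonmonotonicity of distance covariance under projection (\refE{Eproj}) is precisely why one cannot simply bound the projected quantity by $\tfrac12\abs{\hXii}$, so this limit requires care rather than a crude domination. With scalar integrability, and hence (a)--(c), in hand, the computation of $\E(\hXii\mid\bX_1,\bX_2)$ sketched in the first paragraph completes the proof.
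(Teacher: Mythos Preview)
Your approach to (a) is essentially correct but reinvents the wheel: the sign trick you describe is precisely the device behind \refL{LZZ}\ref{LZZP} in \refApp{ABP}, which the paper simply applies to $\bZ:=\bX_1-\bX_3$ (an independent copy being $\bZ':=\bX_4-\bX_2$), obtaining Pettis integrability of $\bZ$ and then of $\bX$ via the elementary observation that $\innprod{\bX_1,\bx}-\innprod{\bX_3,\bx}\in L^1$ with the two terms independent. The technical worry you flag is resolved in the paper's proof of \refL{LZZ} not by finite-dimensional projection but by \emph{norm truncation} $I_M:=\indic{\norm{\bZ}\le M}$: with the sign factor $W:=\sgn\innprod{\bZ,\bu}$ one has $\E[I_MWI_M'W'\innprod{\bZ,\bZ'}]=\norm{\E[I_MW\bZ]}^2\ge0$, bounded above by $\E|\innprod{\bZ,\bZ'}|$, and then $\E[I_M|\innprod{\bu,\bZ}|]=\innprod{\bu,\E[I_MW\bZ]}$ gives the uniform bound, with monotone convergence as $M\to\infty$. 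This sidesteps entirely the nonmonotonicity under projection that you correctly identify as a difficulty for your scheme.

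There is, however, a genuine gap in your plan for the formula. Your step (b), that $\innprod{\bX_i,\bX_j}\in L^1$ for independent copies, is \emph{not} available a priori: Pettis integrability of $\bX$ does not imply $\E|\innprod{\bX,\bX'}|<\infty$ for an independent copy $\bX'$ (see \refE{EZZP-}), and the hypothesis $\hXii\in L^1$ only gives integrability of the \emph{combination} $\innprod{\bX_1-\bX_3,\bX_4-\bX_2}$. So you cannot legitimately split into four cross terms and handle each separately. The paper avoids this by \emph{iterated conditioning}: first integrate over $\bX_4$ alone to get $\E(\hXii\mid\bX_1,\bX_2,\bX_3)=2\innprod{\bX_1-\bX_3,\E\bX-\bX_2}$ (valid because for each fixed $\bx$ the scalar $\innprod{\bx,\bX_4}$ is integrable by Pettis), then over $\bX_3$. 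This uses only $\E|\innprod{\bx,\bX}|<\infty$ for fixed $\bx$, never the joint integrability you assume. Ironically, once \eqref{antigon} is established this way, your (b) follows as a corollary: $\tXii\in L^1$ and the three terms $\innprod{\bX_i,\E\bX}$, $\norm{\E\bX}^2$ are integrable, hence so is $\innprod{\bX_1,\bX_2}$. But using (b) to \emph{derive} the formula is circular.
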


\begin{proof}
By \eqref{perseus}, we have $\hXii=2\innprod{\bZ,\bZ'}$ where $\bZ:=\bX_1-\bX_3$
  and $\bZ':=\bX_4-\bX_2$. 
Assume that $\tXii$ exists, which by our definition means that
$\E|\hXii|<\infty$. 
Thus $\E|\innprod{\bZ,\bZ'}|<\infty$.
\refL{LZZ}\ref{LZZP} applies and shows
that
$\bZ=\bX_1-\bX_3$ is Pettis integrable.
Hence, for every $\bx\in\HX$, 
\begin{align}
 \innprod{\bX_1,\bx}- \innprod{\bX_3,\bx}
= \innprod{\bX_1-\bX_3,\bx}
=\innprod{\bZ,\bx}\in L^1 .
\end{align}
Since $ \innprod{\bX_1,\bx}$ and $ \innprod{\bX_3,\bx}$ are independent
random variables, this implies $\E|\innprod{\bX_1,\bx}|<\infty$, for any
$\bx\in\cH$, and thus $\E\bX_1$ exists in Pettis sense.

Using \eqref{perseus}, we may now integrate over first $\bX_4$ and then
$\bX_3$ and obtain
\begin{gather}
\E\bigpar{\hXii\mid \bX_1,\bX_2,\bX_3}
=2\innprod{\bX_1-\bX_3,\E\bX-\bX_2},
%
\\
    \tXii
=\E\bigpar{\hXii\mid \bX_1,\bX_2}
=2\innprod{\bX_1-\E\bX,\E\bX-\bX_2}.
\end{gather}
showing \eqref{antigon}.
\end{proof}

\begin{theorem}\label{TH4}
  Let $\cX=\HX$ and $\cY=\HY$ be Hilbert spaces.

  \begin{romenumerate}
  \item\label{TH4h1}  
If\/ $\hDCii(\bX,\bY)$ is defined, \ie, 
$\E\bigsqpar{\hXii\hYii}$ is defined as an extended real number,
then $\hDCii(\bX,\bY)\in\ooox$.
\item \label{TH4h2}
If\/  $\hDCii(\bX,\bY)<\infty$, then
\begin{align}\label{th4h2}
  \hDCii(\bX,\bY)
&=\bignorm{\E\bigsqpar{(\bX_1-\bX_2)\tensor(\bY_1-\bY_2)}}^2_{\HX\tensor\HY},
\end{align}
where the expectation exists in Pettis sense.
  \item \label{TH4t1}
If\/ $\tDCii(\bX,\bY)$ is defined, \ie, $\tXii$ and $\tYii$ are defined and
$\E\bigsqpar{\tXii\tYii}$ is defined as an extended real number,
then $\tDCii(\bX,\bY)\in\ooox$.
\item \label{TH4t2}
If furthermore $\tDCii(\bX,\bY)<\infty$, then
\begin{align}\label{th4t2}
  \tDCii(\bX,\bY)
&=4\bignorm{\E[\bX\tensor\bY]-\E\bX\tensor\E\bY}^2_{\HX\tensor\HY},
\end{align}
where the expectations exist in Pettis sense.
\item \label{TH4th}
If\/ $\hDCii(\bX,\bY)$ and $\tDCii(\bX,\bY)$ both are defined, 
as in \ref{TH4h1} and \ref{TH4t1}, and furthermore
$\hDCii(\bX,\bY)$ and $  \tDCii(\bX,\bY)$ both are finite, then
\begin{align}\label{th4th}
\hDCii(\bX,\bY)=\tDCii(\bX,\bY).  
\end{align}
  \end{romenumerate}
\end{theorem}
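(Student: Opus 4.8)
The plan is to transport the entire statement into the tensor product Hilbert space $\HX\tensor\HY$ and reduce it to a single elementary fact about independent, identically distributed Hilbert-space valued variables. By \eqref{perseus} together with the defining property \eqref{tensor} of the tensor product,
\begin{align*}
  \tfrac14\hXii\hYii=\innprod{U,V},
\qquad
  U:=(\bX_1-\bX_3)\tensor(\bY_1-\bY_3),
\quad
  V:=(\bX_4-\bX_2)\tensor(\bY_4-\bY_2),
\end{align*}
and the crucial structural point is that $U$ and $V$ are \emph{independent and identically distributed}: they involve the disjoint index sets $\set{1,3}$ and $\set{2,4}$, and each has the law of $(\bX-\bX')\tensor(\bY-\bY')$ for an independent copy $(\bX',\bY')$. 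Similarly, whenever $\tXii,\tYii$ exist, \refL{Lanti} gives $\tXii=-2\innprod{\bX_1-\E\bX,\bX_2-\E\bX}$ and the analogous formula for $\tYii$, so that $\tfrac14\tXii\tYii=\innprod{P,Q}$ with $P:=(\bX_1-\E\bX)\tensor(\bY_1-\E\bY)$ and $Q:=(\bX_2-\E\bX)\tensor(\bY_2-\E\bY)$ again i.i.d. Thus $\hDCii(\bX,\bY)=\E\innprod{U,V}$ and $\tDCii(\bX,\bY)=4\E\innprod{P,Q}$, and everything reduces to studying $\E\innprod{U,V}$ for i.i.d. $U,V$ in a Hilbert space.

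For \ref{TH4h1} and \ref{TH4t1} I would first record the elementary fact that for i.i.d. $U,V$ with $\E\norm{U}^2<\infty$ one has $\E\innprod{U,V}=\innprod{\E U,\E V}=\norm{\E U}^2\ge0$. To remove the moment assumption I truncate: set $U\nn:=U\indic{\norm{U}\le n}$ and likewise $V\nn$, so that $\innprod{U\nn,V\nn}=\innprod{U,V}\indic{\norm{U}\le n}\indic{\norm{V}\le n}$ and hence $\innprod{U\nn,V\nn}_\pm\upto\innprod{U,V}_\pm$. Since $U\nn,V\nn$ are bounded and i.i.d., $\E\innprod{U\nn,V\nn}_+\ge\E\innprod{U\nn,V\nn}_-$; letting $\ntoo$ and applying monotone convergence gives $\E\innprod{U,V}_+\ge\E\innprod{U,V}_-$. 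When $\E\innprod{U,V}$ is defined as an extended real, at most one side is infinite, so this inequality forces the value into $\ooox$; in particular a well-defined $-\infty$ cannot occur. Applying this to $U,V$ yields \ref{TH4h1} and to $P,Q$ yields \ref{TH4t1}.

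For \ref{TH4h2} and \ref{TH4t2}, the finiteness hypothesis says precisely that $\E\abs{\innprod{U,V}}<\infty$ (resp. $\E\abs{\innprod{P,Q}}<\infty$). Since $U,V$ (resp. $P,Q$) are independent with integrable inner product, \refL{LZZ}, applied in the Hilbert space $\HX\tensor\HY$, shows that $U$ (resp. $P$) is Pettis integrable and that $\E\innprod{U,V}=\innprod{\E U,\E V}=\norm{\E U}^2$. Computing the Pettis means, $\E U=\E\bigsqpar{(\bX_1-\bX_2)\tensor(\bY_1-\bY_2)}$ and $\E P=\E[\bX\tensor\bY]-\E\bX\tensor\E\bY$, then gives \eqref{th4h2} and \eqref{th4t2} respectively.

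Finally, \ref{TH4th} follows by comparing the two representations. When both quantities are finite, both families of Pettis means exist by \ref{TH4h2} and \ref{TH4t2}, and expanding $(\bX_1-\bX_2)\tensor(\bY_1-\bY_2)$ by bilinearity and using independence of the copies yields $\E\bigsqpar{(\bX_1-\bX_2)\tensor(\bY_1-\bY_2)}=2\bigpar{\E[\bX\tensor\bY]-\E\bX\tensor\E\bY}$; squaring norms turns \eqref{th4h2} into \eqref{th4t2}, proving \eqref{th4th}. I expect the only genuine difficulty to lie in \ref{TH4h1} and \ref{TH4t1}, namely in excluding a well-defined value of $-\infty$ with no moment hypothesis whatsoever; this is exactly what the truncation-and-monotone-convergence step accomplishes, the point being that the i.i.d. structure makes each truncated expectation a squared Pettis norm. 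The remaining parts are bookkeeping with the tensor identity \eqref{tensor} and the Pettis-integral lemma \refL{LZZ}.
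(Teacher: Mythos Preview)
Your approach is essentially the paper's own: rewrite $\hXii\hYii$ and $\tXii\tYii$ as $4\innprod{\bZ,\bZ'}$ for i.i.d.\ $\bZ,\bZ'$ in $\HX\tensor\HY$, then invoke \refL{LZZ}. Your truncation argument for \ref{TH4h1} and \ref{TH4t1} is precisely the content of \refL{LZZ}\ref{LZZ+}, which the paper proves in the appendix and cites; and your use of \refL{LZZ} for \ref{TH4h2} and \ref{TH4t2} matches the paper's proof verbatim.

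One point in \ref{TH4th} needs more care than you indicate. When you ``expand $(\bX_1-\bX_2)\tensor(\bY_1-\bY_2)$ by bilinearity and use independence,'' you are implicitly taking the Pettis expectation of each cross term $\bX_i\tensor\bY_j$ with $i\neq j$. But Pettis integrability of $\bX_i$ and $\bY_j$ separately, together with independence, does \emph{not} immediately give Pettis integrability of $\bX_i\tensor\bY_j$ in $\HX\tensor\HY$: one must show $\E|\innprod{\bX_i\tensor\bY_j,\bz}|<\infty$ for \emph{every} $\bz$, not just elementary tensors, and $\innprod{\bX_i\tensor\bY_j,\bz}=\innprod{T_\bz\bX_i,\bY_j}$ is not obviously integrable. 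The paper handles this via the nontrivial \refL{LHTH} (which rests on the fact that Hilbert--Schmidt operators are absolutely summing). Alternatively, since you already know from \ref{TH4h2} and \ref{TH4t2} that both $\E U$ and $\E[\bX\tensor\bY]-\E\bX\tensor\E\bY$ exist, you can verify the claimed identity by testing against elementary tensors $\bx\tensor\by$ only, which reduces to a scalar computation where independence does suffice; this bypasses \refL{LHTH} entirely and is slightly more elementary than the paper's route.
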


\begin{proof}
\pfitemx{\ref{TH4h1},\ref{TH4h2}}
By \eqref{perseus} and \eqref{tensor},
\begin{align}
  \hXii\hYii
&=4\innprod{\bX_1-\bX_3,\bX_4-\bX_2}\innprod{\bY_1-\bY_3,\bY_4-\bY_2}
\notag\\&
=4\biginnprod{(\bX_1-\bX_3)\tensor(\bY_1-\bY_3), 
  (\bX_4-\bX_2)\tensor(\bY_4-\bY_2)}
_{\HX\tensor\HY}.
\end{align}
This is an example of $\innprod{\bZ,\bZ'}$ as in \refL{LZZ}, with 
$\bZ:=(\bX_1-\bX_3)\tensor(\bY_1-\bY_3)
\eqd(\bX_1-\bX_2)\tensor(\bY_1-\bY_2)$.
Thus,
\ref{TH4h1} follows from \refL{LZZ}\ref{LZZ+}, and
\ref{TH4h2} from \refL{LZZ}\ref{LZZP}.

\pfitemx{\ref{TH4t1},\ref{TH4t2}}
Similarly, if $\tXii$ and $\tYii$ exist, then 
\refL{Lanti} 
shows that $\E\bX$ and $\E\bY$ exist in Pettis sense, 
and furthermore, using \eqref{tensor},
\begin{align}\label{luth}
\tXii\tYii&=4\innprod{\bX_1-\E\bX,\bX_2-\E\bX}\innprod{\bY_1-\E\bY,\bY_2-\E\bY}
\notag\\&
=4\innprod{(\bX_1-\E\bX)\tensor(\bY_1-\E\bY),(\bX_2-\E\bX)\tensor(\bY_2-\E\bY)}.
\end{align}
This is another  example of $\innprod{\bZ,\bZ'}$ as in \refL{LZZ}, now with 
$\bZ:=(\bX_1-\E\bX)\tensor(\bY_1-\E\bY)$.
Thus,
\ref{TH4t1} follows from \refL{LZZ}\ref{LZZ+}.

Finally, assume $\tDCii(\bX,\bY)<\infty$, \ie, $\tXii\tYii\in L^1$.
Then \eqref{luth} and \refL{LZZ}\ref{LZZP}  show that
$\E \bZ$ exists in Pettis sense, and that
\begin{align}\label{yen}
  \tDCii(\bX,\bY)=\E[\tXii\tYii]
=4\norm{\E\bZ}^2
=4\norm{\E\bigsqpar{(\bX-\E\bX)\tensor(\bY-\E\bY)}}^2.
\end{align}
We have
\begin{align}\label{koala}
\bX_1\tensor \bY_1 = \bZ+
(\E \bX)\tensor(\bY_1- \E\bY)
+\bX_1\tensor \E\bY.
\end{align}
Furthermore, since $\E\bX$ and $\E\bY$ are constant
vectors, it is easy to see that
$\E [\bX_1\tensor \E\bY] = \E \bX\tensor \E\bY$ and
$\E [(\E \bX)\tensor(\bY_1- \E\bY)] = \E \bX\tensor \E[\bY_1-\E\bY]=0$.
(This also follows from the more general \refL{LHTH}.)
Hence, \eqref{koala} shows that $\E[\bX\tensor\bY]$ exists, 
and 
\begin{align}
\E\sqpar{\bX\tensor \bY} 
= \E\sqpar{\bX_1\tensor \bY_1} 
=  \E \bZ +\E\bX\tensor \E\bY.
\end{align}
Thus \eqref{th4t2} follows from \eqref{yen}.

\pfitemref{TH4th}
In this case, \ref{TH4h1}--\ref{TH4t2} all hold.
By \ref{TH4t2}, the expectations $\E\bX$, $\E\bY$ and $\E[\bX\tensor\bY]$ 
exist. Hence,
$\E[\bX_i\tensor\bY_i]=\E[\bX\tensor\bY]$ exists for every $i$.
Furthermore, if $i\neq j$ so $\bX_i$ and $\bY_j$ are independent, 
$\E[\bX_i\tensor\bY_j]$ exists by \refL{LHTH} and equals 
$\E[\bX_i]\tensor\E[\bY_j]$.
Hence, $\E[\bX_i\tensor\bY_j]$ exists for every $i$ and $j$, and thus
\begin{align}
&\E\bigsqpar{(\bX_1-\bX_2)\tensor(\bY_1-\bY_2)}
\notag\\&\hskip4em
=\E[\bX_1\tensor\bY_1] + \E[\bX_2\tensor\bY_2]
-\E[\bX_1\tensor\bY_2]-\E[\bX_2\tensor\bY_1]
\notag\\&\hskip4em
=2\bigpar{\E[\bX\tensor\bY] - \E[\bX]\tensor\E[\bY]}.
\end{align}
Consequently, \eqref{th4th} follows from \eqref{th4h2} and \eqref{th4t2}.
\end{proof}

\subsection{Metric spaces of negative type}
In this subsection 
we assume that $\cX$ and $\cY$ are metric spaces such
that  $d_\cX^\gb$ and $d_\cY^\gb$ both are of negative type,
see \refR{Rsemimetric}.
We then can embed the spaces into Hilbert spaces as in
\refR{Rembedded} and transfer the results in \refSS{SSH2}.

\begin{theorem}\label{TNT}
Let $\gb>0$ and let $\cX$ and $\cY$ be metric spaces such
that  $d_\cX^\gb$ and $d_\cY^\gb$ both are of negative type.
  \begin{romenumerate}
  \item\label{TNTh1}  
If\/ $\hDCb(\bX,\bY)$ is defined, \ie, 
$\E\bigsqpar{\hXb\hYb}$ is defined as an extended real number,
then $\hDCb(\bX,\bY)\in\ooox$.
  \item \label{TNTt1}
If\/ $\tDCb(\bX,\bY)$ is defined, \ie, $\tXb$ and $\tYb$ are defined and
$\E\bigsqpar{\tXb\tYb}$ is defined as an extended real number,
then $\tDCb(\bX,\bY)\in\ooox$.
\item \label{TNTth}
If\/ $\hDCb(\bX,\bY)$ and $\tDCb(\bX,\bY)$ both are defined, 
as in \ref{TNTh1} and \ref{TNTt1}, and furthermore
both are finite, then
\begin{align}\label{tnt4th}
\hDCb(\bX,\bY)=\tDCb(\bX,\bY).  
\end{align}
  \end{romenumerate}
\end{theorem}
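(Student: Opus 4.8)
The plan is to reduce \refT{TNT} to its Hilbert space counterpart \refT{TH4} by means of the negative-type embeddings discussed in \refR{Rembedded}. Since $d_\cX^\gb$ and $d_\cY^\gb$ are of negative type, Schoenberg's theorem (see \refR{Rsemimetric} and \cite{Schoenberg1938}) provides maps $\gf:\cX\to\HX$ and $\gf':\cY\to\HY$ into Hilbert spaces satisfying \eqref{--}. I would set $\mathbf U:=\gf(\bX)$ and $\mathbf V:=\gf'(\bY)$, with copies $\mathbf U_i:=\gf(\bX_i)$ and $\mathbf V_i:=\gf'(\bY_i)$; since $(\bX_i,\bY_i)$ are \iid{} copies of $(\bX,\bY)$ and $\gf,\gf'$ are deterministic, $(\mathbf U_i,\mathbf V_i)$ are \iid{} copies of $(\mathbf U,\mathbf V)$. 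These are genuine $\HX$- and $\HY$-valued random variables: by \eqref{--}, $\gf$ is an isometry from $(\cX,d_\cX^{\gb/2})$ into $\HX$, hence continuous and Borel measurable, and since $\cX$ is separable its image lies in a separable subspace, so no measurability problem arises.

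The key observation is that $\hXb$, defined by \eqref{hxb}, depends on $\bX_1,\dots,\bX_4$ only through the quantities $d(\bX_i,\bX_j)^\gb$. By \eqref{--} these equal $\norm{\mathbf U_i-\mathbf U_j}^2$, so $\hXb$ coincides, as a random variable, with the $\gb=2$ version $\widehat{\mathbf U}_2$ of \eqref{hxb} built from $\mathbf U$; likewise $\hYb=\widehat{\mathbf V}_2$. Consequently $\hDCb(\bX,\bY)=\hDCii(\mathbf U,\mathbf V)$, and the two sides are defined as extended reals simultaneously. Together with \refT{TH4}\ref{TH4h1} this already yields \ref{TNTh1}.

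For the $\tilde{}$-variables I would show $\tXb=\widetilde{\mathbf U}_2$ whenever either side is defined (\ie{} whenever $\hXb\in L^1$). The one point needing care is that $\gf$ need not be injective, so a priori $\gs(\mathbf U_1,\mathbf U_2)$ is strictly smaller than $\gs(\bX_1,\bX_2)$. However, $\hXb$ is a function of $(\mathbf U_1,\dots,\mathbf U_4)$ alone, and $(\bX_3,\bX_4)$ is independent of $(\bX_1,\bX_2)$; integrating out $\mathbf U_3,\mathbf U_4$ therefore produces the same function of $(\mathbf U_1,\mathbf U_2)$ whether one conditions on $(\bX_1,\bX_2)$ or on $(\mathbf U_1,\mathbf U_2)$. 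Thus $\E\bigpar{\hXb\mid\bX_1,\bX_2}=\E\bigpar{\hXb\mid\mathbf U_1,\mathbf U_2}$ \as, that is $\tXb=\widetilde{\mathbf U}_2$, with the defining condition $\hXb\in L^1$ the same on both sides. Hence $\tDCb(\bX,\bY)=\tDCii(\mathbf U,\mathbf V)$, again with simultaneous defined-ness, and \ref{TNTt1} follows from \refT{TH4}\ref{TH4t1}.

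Finally, part \ref{TNTth} is immediate: if $\hDCb(\bX,\bY)$ and $\tDCb(\bX,\bY)$ are both defined and finite, then so are $\hDCii(\mathbf U,\mathbf V)$ and $\tDCii(\mathbf U,\mathbf V)$, and \refT{TH4}\ref{TH4th} gives $\hDCii(\mathbf U,\mathbf V)=\tDCii(\mathbf U,\mathbf V)$, which transfers back to \eqref{tnt4th}. I expect the only genuine obstacle to be the conditional-expectation identification $\tXb=\widetilde{\mathbf U}_2$ in the third paragraph, since it is the sole place where the possible non-injectivity of $\gf$ could cause trouble; everything else is a direct transcription through the embedding, with all the analytic content (existence of the Pettis integrals and the tensor-norm formulas) already supplied by \refT{TH4}.
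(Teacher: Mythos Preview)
Your proof is correct and follows exactly the paper's approach: the paper's proof reads simply ``Immediate by \refR{Rembedded} and \refT{TH4}\ref{TH4h1}\ref{TH4t1}\ref{TH4th}'', and you have spelled out the content of that remark. One minor point: your concern about non-injectivity of $\gf$ is unnecessary, since \eqref{--} forces $\gf(\bx_1)=\gf(\bx_2)\implies d_\cX(\bx_1,\bx_2)=0\implies \bx_1=\bx_2$ (as $d_\cX$ is a genuine metric), so $\gs(\mathbf U_1,\mathbf U_2)=\gs(\bX_1,\bX_2)$ and the conditional-expectation identification is automatic.
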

\begin{proof}
Immediate by \refR{Rembedded} and \refT{TH4}\ref{TH4h1}\ref{TH4t1}\ref{TH4th}.
\end{proof}

This gives a partial (but not complete) answer to \refP{P=} for spaces with
$d^\ga$ of negative type; 
recall from \refR{Rsemimetric} that when $0<\ga\le2$, this includes
Hilbert spaces, in particular $\bbR$. 

\begin{remark}
  If $d$ is a metric of negative type, then so is $d^\gb $ for every
  $\gb\le1$.
Hence, if $\cX$ and $\cY$ are metric spaces of negative type, then \refT{TNT}
applies at least with $0<\ga\le1$.
\end{remark}

\subsection{Negative values?}

If $\cX$ and $\cY$ are metric spaces such that $d^\gb$ is of negative type,
then \refT{TNT} shows that $\hDCb(\bX,\bY)$  and $\tDCb(\bX,\bY)$ 
may not be negative and finite, nor $-\infty$. 
\refT{TD1?} then shows the same for $\xDCb(\bX,\bY)$.
The same is also, trivially, true for $\EDCb(\bX,\bY)$ and $\HDCb(\bX,\bY)$
when they are applicable. 
More precisely, we have the possibilities shown in \refTab{tab:neg},
by  
\refTs{T1}, \ref{T52}, \ref{TD1?} and \ref{TNT};
\refEs{Egb}, \ref{ED2bad},  \ref{ED3bad}, \ref{Egbt}, \ref{ED4};
\eqref{d4} and \eqref{d5a}.
\begin{table}[h!]
  \centering
\begin{tabular}[h]{c|c|c|c|c|c}
  & $\ooo$ & $+\infty$ & $(-\infty,0)$ & $ -\infty$ & undefined
\\\hline
$\xDCb(\bX,\bY)$ & \ja & \nej & \nej & \nej & \ja \\
$\hDCb(\bX,\bY)$ & \ja & \ja & \nej & \nej & \ja \\
$\tDCb(\bX,\bY)$ & \ja & \ja & \nej & \nej & \ja \\
$\EDCb(\bX,\bY)$ & \ja & \ja & \nej & \nej & \nej \\
$\HDCb(\bX,\bY)$ & \ja & \ja & \nej & \nej & \nej \\
\hline
\end{tabular}
  \caption{Possibilities when $d^\gb$ is of negative type}
  \label{tab:neg}
\end{table}

Conversely, 
if  $\cX$ or $\cY$ is a  metric space that is not  of negative type
then $\DC(\bX,\bY)<0$ is possible
(as soon as  both spaces have at least two points),
see \cite[Proposition 3.15]{Lyons};
by \refT{T1}, this holds 
for any of $\xDCb(\bX,\bY)$, $\hDCb(\bX,\bY)$, $\tDCb(\bX,\bY)$.
\refT{TD1?} still rules out
$\pm\infty$ for $\xDCb(\bX,\bY)$,
and we find 
the possibilities shown in \refTab{tab:ejneg}.
\begin{table}[h!]
  \centering
\begin{tabular}[h]{c|c|c|c|c|c}
  & $\ooo$ & $+\infty$ & $(-\infty,0)$ & $ -\infty$ & undefined
\\\hline
$\xDCb(\bX,\bY)$ & \ja & \nej & \ja & \nej & \ja \\
$\hDCb(\bX,\bY)$ & \ja & \ja & \ja & ? & \ja \\
$\tDCb(\bX,\bY)$ & \ja & \ja & \ja & ? & \ja \\
\hline
\end{tabular}
  \caption{Possibilities when $d^\gb$ is not of negative type}
  \label{tab:ejneg}
\end{table}

For $\hDCb(\bX,\bY)$ and $\tDCb(\bX,\bY)$, 
we do not know whether 
$-\infty$ is possible
(in Case \ref{DC2} in \refS{Sopt}):

\begin{problem}
  \label{P-}
Is $\hDCb(\bX,\bY)=-\infty$ or $\tDCb(\bX,\bY)=-\infty$ possible?
\end{problem}

\section*{Acknowledgement}
This work  was inspired by a lecture by Thomas Mikosch at a
mini-mini-workshop in Gothenburg in April, 2019. 
I thank Thomas Mikosch for helpful comments.

\appendix

\section{A uniform integrability lemma}\label{Aui}

We use above some well-known standard results on uniform integrability, see
\eg{} \cite[\S5.4 and \S5.5]{Gut}. We use also the following simple result which
perhaps is less well-known; since we have not found a good reference, we
provide a proof for completeness.

In this section, all random variables are real-valued.
We state the lemmas below for sequences of random variables (the case that we
use), but they are valid (with the same proofs)
for  families $(X_\gaxx)_{\gaxx\in\cAxx}$ and $(Y_\gaxx)_{\gaxx\in\cAxx}$ with an
arbitrary index set $\cAxx$.

\begin{lemma}\label{LA1}
  Let $(X_n)_n$ and $(Y_n)_n$ be uniformly integrable sequences of random
  variables, and suppose that for each $n$, $X_n$ and $Y_n$ are independent.
Then the sequence $(X_nY_n)_n$ is also uniformly integrable.
\end{lemma}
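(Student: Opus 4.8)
The plan is to verify uniform integrability of $(X_nY_n)_n$ directly from the criterion that $\sup_n \E\bigsqpar{|X_nY_n|\indic{|X_nY_n|>K}}\to0$ as $K\to\infty$, using truncation to dispose of the cases where one of the two factors is moderate, and reserving the independence hypothesis for the case where both factors are simultaneously large.

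Fix $\eps>0$. First I would use uniform integrability of the two given sequences to choose $M$ so large that $\sup_n\E\bigsqpar{|X_n|\indic{|X_n|>M}}<\eps$ and $\sup_n\E\bigsqpar{|Y_n|\indic{|Y_n|>M}}<\eps$. For an arbitrary threshold $K$ I would then exploit the elementary inclusion
\begin{align*}
\set{|X_nY_n|>K}
&\subseteq \set{|X_nY_n|>K,\ |X_n|\le M}\\
&\quad\cup\set{|X_nY_n|>K,\ |Y_n|\le M}
\cup\set{|X_n|>M,\ |Y_n|>M},
\end{align*}
valid because $|X_nY_n|>K$ together with the failure of both $|X_n|\le M$ and $|Y_n|\le M$ forces the last event. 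Dominating $\indic{|X_nY_n|>K}$ by the sum of the three indicators on the right splits $\E\bigsqpar{|X_nY_n|\indic{|X_nY_n|>K}}$ into three pieces.

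On $\set{|X_n|\le M}$ one has $|X_nY_n|\le M|Y_n|$, and $|X_nY_n|>K$ then forces $|Y_n|>K/M$; hence the first piece is at most $M\,\E\bigsqpar{|Y_n|\indic{|Y_n|>K/M}}$, and symmetrically the second is at most $M\,\E\bigsqpar{|X_n|\indic{|X_n|>K/M}}$. The third piece is where independence enters: since $X_n$ and $Y_n$ are independent, so are $|X_n|\indic{|X_n|>M}$ and $|Y_n|\indic{|Y_n|>M}$, and therefore
\begin{align*}
\E\bigsqpar{|X_nY_n|\indic{|X_n|>M,\ |Y_n|>M}}
&=\E\bigsqpar{|X_n|\indic{|X_n|>M}}\,\E\bigsqpar{|Y_n|\indic{|Y_n|>M}}\\
&<\eps^2
\end{align*}
uniformly in $n$, by the choice of $M$. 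With $M$ now fixed, I would finally let $K\to\infty$; since then $K/M\to\infty$, uniform integrability of $(X_n)_n$ and of $(Y_n)_n$ makes the first two pieces less than $\eps$ each, uniformly in $n$, for all large $K$. Combining, $\sup_n\E\bigsqpar{|X_nY_n|\indic{|X_nY_n|>K}}\le 2\eps+\eps^2$ for all large $K$, and since $\eps>0$ is arbitrary this gives uniform integrability of $(X_nY_n)_n$.

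The one point requiring care is the order of the quantifiers: $M$ must be selected (as a function of $\eps$) \emph{before} $K$, because the truncation estimates for the mixed pieces are controlled only through the ratio $K/M$. The sole use of the independence assumption is the factorization of the expectation in the third piece; everything else is bookkeeping with indicators and the standard tail characterization of uniform integrability.
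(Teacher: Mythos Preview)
Your proof is correct. It differs from the paper's argument in packaging rather than substance: the paper proves an auxiliary characterization of uniform integrability (its \refL{LA2}: a sequence is \ui{} iff it can be approximated in $L^1$, uniformly in $n$, by a sequence bounded by a common constant) and then bounds $\E\bigabs{X_nY_n-X_n^\eps Y_n^\eps}$ via a three-term triangle inequality, using independence to factor $\E\bigabs{X_n(Y_n-Y_n^\eps)}$ and the other mixed terms. You instead work directly with the tail criterion and decompose the event $\{|X_nY_n|>K\}$ according to whether one factor is moderate or both are large. Both routes invoke independence at exactly the same spot---to factor the expectation of the product when both variables exceed the truncation level---so the core idea is the same. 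Your version is self-contained and avoids the auxiliary lemma; the paper's version isolates a reusable principle. Either is fine.
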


To prove this, we use another simple result
that perhaps is less well-known than it deserves.

\begin{lemma}\label{LA2}
  Let $(X_n)_{n}$ be a sequence of  random variables.
Then $(X_n)_{n}$ is uniformly integrable if and only if for every
$\eps>0$
there exists $K_\eps<\infty$ and a sequence
$(X_n^\eps)_{n}$ of random variables such that for every $n$,
\begin{align}
&  |X_n^\eps|\le K_\eps \quad\textas,\label{la2a}
\\
&\E \bigabs{X_n-X_n^\eps} <\eps.\label{la2b}
\end{align}
\end{lemma}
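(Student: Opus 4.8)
The plan is to prove the two implications separately, with the forward direction being essentially a restatement of the definition of uniform integrability and the converse carrying the real content.

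For the direction where $(X_n)_n$ is assumed \ui, I would take the truncations as the approximating variables. Given $\eps>0$, the definition of uniform integrability provides $K_\eps<\infty$ with $\sup_n\E\bigsqpar{\abs{X_n}\indic{\abs{X_n}>K_\eps}}<\eps$; setting $X_n^\eps:=X_n\indic{\abs{X_n}\le K_\eps}$ makes \eqref{la2a} immediate, while $\E\abs{X_n-X_n^\eps}=\E\bigsqpar{\abs{X_n}\indic{\abs{X_n}>K_\eps}}<\eps$ gives \eqref{la2b}.

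For the converse, assume the approximation property. The first step is to extract a uniform $L^1$ bound: applying the hypothesis with $\eps=1$ yields $K_1$ and $(X_n^1)_n$ with $\E\abs{X_n}\le\E\abs{X_n^1}+\E\abs{X_n-X_n^1}\le K_1+1=:M$ for all $n$, so $\sup_n\E\abs{X_n}\le M<\infty$; this is what makes Markov's inequality useful. Now fix $\eps>0$ and take the corresponding $K_\eps$ and $(X_n^\eps)_n$; for any threshold $\gl>0$ I would split
\begin{align}
\E\bigsqpar{\abs{X_n}\indic{\abs{X_n}>\gl}}
&\le \E\abs{X_n-X_n^\eps}+\E\bigsqpar{\abs{X_n^\eps}\indic{\abs{X_n}>\gl}}
\notag\\
&\le \eps + K_\eps\,\P\bigpar{\abs{X_n}>\gl}
\le \eps + \frac{K_\eps M}{\gl},
\notag
\end{align}
using $\abs{X_n^\eps}\le K_\eps$ and $\P(\abs{X_n}>\gl)\le M/\gl$. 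Taking first $\gl\to\infty$ and then $\eps\to0$ shows $\sup_n\E\bigsqpar{\abs{X_n}\indic{\abs{X_n}>\gl}}\to0$, \ie{} $(X_n)_n$ is \ui.

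I do not expect a genuine obstacle here; the only points requiring care are the order of the two limits (one must send $\gl\to\infty$ before letting $\eps\to0$, since $K_\eps$ may blow up as $\eps\to0$) and the preliminary uniform $L^1$ bound, without which Markov's inequality would not give a bound on $\P(\abs{X_n}>\gl)$ that is uniform in $n$.
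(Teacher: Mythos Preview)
Your proof is correct and is exactly the standard argument the paper is alluding to; the paper itself gives no details beyond calling it ``a simple exercise, using your favourite definition of uniform integrability,'' so you have simply filled in what was left to the reader.
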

\begin{proof}
  This is a simple exercise, using your favourite definition of uniform
  integrability. 
(See \eg{} \cite[Definition 5.4.1 and Theorem 5.4.1]{Gut}.)
\end{proof}

\begin{proof}[Proof of \refL{LA1}]
The uniform integrability implies the existence of constants $B$ and $B'$
such that $\E|X_n|\le B$ and $\E |Y_n|\le B'$ for all $n$.  

Let $0<\eps<1$. \refL{LA2} shows that there exists $K_\eps<\infty$ 
and random variables $X_n^\eps$ and $Y_n^\eps$ 
such that both
\eqref{la2a}--\eqref{la2b} and the corresponding inequalities with $Y$ hold.
Then $|X_n^\eps Y_n^\eps|\le K_\eps^2$ a.s.
Since $X_n$ and $Y_n$ are independent, we may also assume that the pairs
$(X_n,X_n^\eps)$ and $(Y_n,Y_n^\eps)$ are independent, and then
\begin{align}
  \E\bigabs{X_nY_n-X_n^\eps Y_n^\eps}
&\le \E\bigabs{X_n(Y_n-Y_n^\eps)}
+
\E\bigabs{(X_n-X_n^\eps)Y_n}
\notag\\&\hskip10em{}
+
\E\bigabs{(X_n-X_n^\eps)(Y_n-Y_n^\eps)}
\notag\\&
\le B\eps+B'\eps+\eps^2
=(B+B'+1)\eps.
\end{align}
\refL{LA2} in the opposite direction shows that the sequence
$(X_nY_n)_n$ is uniformly integrable.
\end{proof}

\section{Bochner and Pettis integrals}\label{ABP}

The expectation $\E\bX$ of an $\cH$-valued random variable $\bX$, where
$\cH$ is a separable Hilbert space, 
can be defined using either the Bochner integral or the Pettis integral;
see \eg{} the summary in \cite[\S2.4]{SJ271} and the references given there.
Both integrals are defined for general Banach spaces, but in this paper we
need them only for separable Hilbert spaces.
In this case, $\E\bX$ exists in Bochner sense if and only if
$\E\norm{\bX}<\infty$, and $\E\bX$ exists in Pettis sense
if and only if $\E|\innprod{\bX,\bx}|<\infty$ for every $\bx\in\cH$, and then
$\E\bX$ is the element of $\HX$ determined by
\begin{align}\label{pettis}
  \innprod{\E\bX,\bx} = \E\innprod{\bX,\bx},
\qquad \bx\in\cH.
\end{align}
If $\E\bX$ exists in Bochner sense, then it exists in Pettis sense, and the
value is the same.
(Hence, the reader may choose to always interpret $\E\bX$ in Pettis sense.
However, the Bochner integral is more convenient when applicable.)
The converse is not true; there are $\bX$ such that $\E\bX$ exists in Pettis
sense but not Bochner sense. (See \eg \refE{EZZB-}.)

It is well-known, and easy to see, that if $\E\bX$ exists in Pettis sense,
then there exists $C<\infty$ (depending on $\bX$) such that
\begin{align}\label{PC}
  \E\abs{\innprod{\bX,\bx}} \le C\norm{\bx},
\qquad \bx\in\HX.
\end{align}

We use in \refSS{SSH2}
some results on Pettis integrals in (separable) Hilbert spaces, stated in the
lemmas below.
We believe that at least some of these are known, but since we have not
found references, we give complete proofs.

\begin{lemma}\label{LZZ}
  Let $\bZ$ be \rv{} in a separable Hilbert space $\cH$, and let $\bZ'$ be
  an independent copy of $\bZ$.
  \begin{romenumerate}
    
  \item \label{LZZB}
If\/ $\bZ$ is Bochner integrable, \ie, if $\E\norm{\bZ}<\infty$, then
\begin{align}\label{EZZ}
  \E |\innprod{\bZ,\bZ'}|<\infty.
\end{align}
\item \label{LZZP}
If \eqref{EZZ} holds, then $\bZ$ is Pettis integrable, \ie, $\E\bZ$ exists
in Pettis sense. Moreover, 
\begin{align}\label{EZZ2}
    \E \innprod{\bZ,\bZ'} = \norm{\E\bZ}^2 
\ge0.
\end{align}
\item \label{LZZ+}
If\/ 
$\E \innprod{\bZ,\bZ'}_+<\infty$,
then \eqref{EZZ} holds.
In other words, 
$\E \innprod{\bZ,\bZ'}$ may be finite (and then $\ge0$ by \eqref{EZZ2}),
$+\infty$ or undefined, but never $-\infty$.
  \end{romenumerate}
\end{lemma}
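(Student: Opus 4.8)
The plan is to handle the three parts in turn, the common engine being a single truncation identity. Throughout I write $\bZ_M:=\bZ\indic{\norm{\bZ}\le M}$, which satisfies $\E\norm{\bZ_M}\le M<\infty$ and is hence Bochner integrable, so $\E\bZ_M$ exists and its defining property gives $\innprod{\E\bZ_M,\bx}=\E\innprod{\bZ_M,\bx}$ for every $\bx$. Expanding in an ON-basis and using independence (interchanging $\E$ with the basis sum is justified since $\E[\norm{\mathbf W}\,\norm{\mathbf W'}]=(\E\norm{\mathbf W})^2<\infty$) yields the workhorse identity: for any Bochner-integrable $\cH$-valued $\mathbf W$ with independent copy $\mathbf W'$, $\norm{\E\mathbf W}^2=\E\innprod{\mathbf W,\mathbf W'}\ge0$. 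Part~\ref{LZZB} is then immediate: by the \CSineq{} and independence, $\E|\innprod{\bZ,\bZ'}|\le\E[\norm{\bZ}\,\norm{\bZ'}]=(\E\norm{\bZ})^2<\infty$.

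For Part~\ref{LZZP}, set $A:=\E|\innprod{\bZ,\bZ'}|<\infty$. The hard point is Pettis integrability, i.e.\ $\E|\innprod{\bZ,\bx}|<\infty$ for \emph{every} $\bx$; finiteness on a dense subspace would not suffice, since $\bx\mapsto\E|\innprod{\bZ,\bx}|$ is merely lower semicontinuous (and can blow up off a dense subspace, as a simple example shows). I would obtain it by a sign-tilting trick: fix $\bx$ and set $\bY:=\sgn(\innprod{\bZ,\bx})\bZ$, with the copy $\bY'$ built the same way from $\bZ'$. Then $\norm{\bY}=\norm{\bZ}$ and $|\innprod{\bY,\bY'}|=|\innprod{\bZ,\bZ'}|$, so applying the identity to $\bY_M:=\sgn(\innprod{\bZ,\bx})\bZ_M$ gives $\norm{\E\bY_M}^2=\E\innprod{\bY_M,\bY'_M}\le A$, while $\innprod{\E\bY_M,\bx}=\E[|\innprod{\bZ,\bx}|\indic{\norm{\bZ}\le M}]$. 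Hence $\E[|\innprod{\bZ,\bx}|\indic{\norm{\bZ}\le M}]\le\sqrt{A}\,\norm{\bx}$, and letting $\Mtoo$ (monotone convergence) yields $\E|\innprod{\bZ,\bx}|\le\sqrt{A}\,\norm{\bx}$ for all $\bx$. This gives both Pettis integrability — the bounded linear functional $\bx\mapsto\E\innprod{\bZ,\bx}$ is represented by an element $\E\bZ$ via Riesz — and the estimate \eqref{PC}. For \eqref{EZZ2} I would then show $\E\bZ_M\to\E\bZ$ strongly: applying the identity to $\bZ\indic{M<\norm{\bZ}\le M'}$ bounds $\norm{\E\bZ_{M'}-\E\bZ_M}^2$ by $\E[|\innprod{\bZ,\bZ'}|\indic{\norm{\bZ}>M}\indic{\norm{\bZ'}>M}]$, which tends to $0$ by dominated convergence, so $(\E\bZ_M)$ is Cauchy with limit $\E\bZ$. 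Consequently $\norm{\E\bZ_M}^2\to\norm{\E\bZ}^2$, whereas $\norm{\E\bZ_M}^2=\E\innprod{\bZ_M,\bZ'_M}\to\E\innprod{\bZ,\bZ'}$ (dominated by $|\innprod{\bZ,\bZ'}|\in L^1$); comparing the two limits gives \eqref{EZZ2}, and the value is $\ge0$.

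Part~\ref{LZZ+} is a short corollary of the same identity. From $\norm{\E\bZ_M}^2=\E\innprod{\bZ_M,\bZ'_M}\ge0$ and $\innprod{\bZ_M,\bZ'_M}=\innprod{\bZ,\bZ'}\indic{\norm{\bZ}\le M}\indic{\norm{\bZ'}\le M}$, splitting into positive and negative parts gives $\E[\innprod{\bZ,\bZ'}_-\indic{\norm{\bZ}\le M}\indic{\norm{\bZ'}\le M}]\le\E[\innprod{\bZ,\bZ'}_+\indic{\norm{\bZ}\le M}\indic{\norm{\bZ'}\le M}]\le\E\innprod{\bZ,\bZ'}_+<\infty$. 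Letting $\Mtoo$ and using monotone convergence yields $\E\innprod{\bZ,\bZ'}_-\le\E\innprod{\bZ,\bZ'}_+<\infty$, so \eqref{EZZ} holds. The main obstacle overall is the finiteness step in Part~\ref{LZZP} (Pettis integrability for all $\bx$, not just on a dense set); the sign-tilting trick is the device that resolves it, and everything else reduces to truncation together with monotone or dominated convergence.
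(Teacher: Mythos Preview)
Your proof is correct and follows essentially the same route as the paper: the truncation $\bZ_M=\bZ\indic{\norm{\bZ}\le M}$, the ``workhorse'' identity $\norm{\E\mathbf W}^2=\E\innprod{\mathbf W,\mathbf W'}$ for bounded $\mathbf W$, and the sign-tilting trick $\bY=\sgn(\innprod{\bZ,\bx})\bZ$ are exactly what the paper uses (there written as $W=\sgn\innprod{\bZ,\bu}$ and $I_M=\indic{\norm{\bZ}\le M}$). The one place where you work harder than necessary is the derivation of \eqref{EZZ2}: once Pettis integrability is in hand, the paper simply conditions on $\bZ$ to get $\E\bigpar{\innprod{\bZ,\bZ'}\mid\bZ}=\innprod{\bZ,\E\bZ'}$ and then takes a second expectation, giving $\E\innprod{\bZ,\bZ'}=\innprod{\E\bZ,\E\bZ'}=\norm{\E\bZ}^2$ in two lines, whereas your Cauchy-sequence argument for $\E\bZ_M\to\E\bZ$ is a longer detour to the same destination.
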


\begin{remark}
We show in \refEs{EZZB-} and \ref{EZZP-} that the implications in \ref{LZZB}
and \ref{LZZP} are strict, \ie, their converses do not hold.
 
Furthermore,
it is easy find examples, even with $\cH=\bbR$, where
$\E\innprod{\bZ,\bZ'}$ is $+\infty$ or undefined (\ie, $\infty-\infty$);
take any real-valued random $\bZ$ with 
$\bZ\ge0$ or with a symmetric distribution, respectively, and 
further $\E|\bZ|=\infty$.
\end{remark}

\begin{proof}[Proof of \refL{LZZ}]
\pfitemref{LZZB}
By the \CSineq,
$ |\innprod{\bZ,\bZ'}|\le\norm{\bZ}\norm{\bZ'}$, and \eqref{EZZ} follows
by the independence of $\bZ$ and $\bZ'$.

\pfitemref{LZZP}
Let $A:=\E |\innprod{\bZ,\bZ'}|$ and let $\bu\in\cH$ with $\norm{u}=1$.
Furthermore, let $W:=\sgn\innprod{\bZ,\bu}$ and $W':=\sgn\innprod{\bZ',\bu}$,
and let for $M>0$,
 $I_M:=\indic{\norm{\bZ}\le M}$ and
$I_M':=\indic{\norm{\bZ'}\le M}$.
Since $I_MW\bZ\eqd I_M'W'\bZ'$ is measurable and bounded, 
$\E[I_MW\bZ]=\E[ I_M'W'\bZ']$ exists, even in Bochner sense, and
we have, for any finite $M$,
\begin{align}\label{lyx}
  A&
\ge \E\bigsqpar{I_MWI_M'W'\innprod{\bZ,\bZ'}}
= \E{ \innprod{I_MW\bZ,I_M'W'\bZ'}}
\notag\\&
= \E\bigsqpar{ \E\bigpar{\innprod{I_MW\bZ,I_M'W'\bZ'}\mid\bZ}}
= \E{ \innprod{I_MW\bZ,\E[I_M'W'\bZ']}}
\notag\\&
= \innprod{\E[I_MW\bZ],\E[I_M'W'\bZ']}
= \norm{\E[I_MW\bZ]}^2.
\end{align}
Hence, by the \CSineq{}, $\norm{\bu}=1$, and \eqref{lyx},
\begin{align}
  \E\bigsqpar{I_M\abs{\innprod{\bu,\bZ}}}
&=  \E\bigsqpar{I_M W{\innprod{\bu,\bZ}}}
=  \E{\innprod{\bu,I_M W\bZ}}
=  \innprod{\bu,\E[I_M W\bZ]}
\notag\\&
\le
\norm{\E[I_M W\bZ]}
\le A\qq
.
\end{align}
Letting \Mtoo{} yields, by monotone convergence,
\begin{align}
  \E\bigabs{\innprod{\bu,\bZ}}
\le A\qq
\end{align}
for every $\bu$ with $\norm{\bu}=1$, which (since $\cH$ is reflexive) shows
that $\bZ$ is Pettis integrable.

Finally, the Pettis integrability yields first
\begin{align}\label{lyx6}
  \E\bigpar{\innprod{\bZ,\bZ'}\mid\bZ}
=\innprod{\bZ,\E\bZ'}
\end{align}
and then, taking the expectation of \eqref{lyx6},
\begin{align}
  \E{\innprod{\bZ,\bZ'}}
=\E\innprod{\bZ,\E\bZ'}
=\innprod{\E\bZ,\E\bZ'}
=\innprod{\E\bZ,\E\bZ},
\end{align}
which is \eqref{EZZ2}.

\pfitemref{LZZ+}
We have, similarly to \eqref{lyx},
\begin{align}\label{lyx+}
 \E\bigsqpar{I_MI_M'\innprod{\bZ,\bZ'}}
&= \E{ \innprod{I_M\bZ,I_M'\bZ'}}
= \E\bigsqpar{ \E\bigpar{\innprod{I_M\bZ,I_M'\bZ'}\mid\bZ}}
\notag\\&
= \E{ \innprod{I_M\bZ,\E[I_M'\bZ']}}
= \innprod{\E[I_M\bZ],\E[I_M'\bZ']}
\notag\\&
= \norm{\E[I_M\bZ]}^2
\ge0.
\end{align}
Hence,
\begin{align}
   \E\bigsqpar{I_MI_M'\innprod{\bZ,\bZ'}_-}
\le  \E\bigsqpar{I_MI_M'\innprod{\bZ,\bZ'}_+}
\le  \E\bigsqpar{\innprod{\bZ,\bZ'}_+}<\infty,
\end{align}
and letting \Mtoo{} yields 
$   \E \innprod{\bZ,\bZ'}_-<\infty$
by monotone convergence.
Hence, \eqref{EZZ} holds, and the result follows.
\end{proof}

We give counterexamples to converses of the statements
in \refL{LZZ}.
\begin{example}
  \label{EZZB-}
Let $N$ be a positive integer-valued random variable and let $p_n:=\P(N=n)$,
let $(a_n)_1^\infty$ be a sequence of positive numbers,
and let $(\be_i)_i$ be an ON-basis in $\HX$. Define $\bZ:=a_N \be_N$.
Then 
\begin{align}
  \E\norm{\bZ}=\E a_N=\sumn a_np_n.
\end{align}
If $N'$ is an independent copy of $N$, and $\bZ':=a_{N'}e_{N'}$, then
$\innprod{\bZ,\bZ'}=a_N^2\indic{N=N'}$, and thus
\begin{align}
  \E\bigabs{\innprod{\bZ,\bZ'}}
=  \E\innprod{\bZ,\bZ'}
=\sumn a_n^2p_n^2.
\end{align}
Consequently, choosing $p_n=c/n^2$ and $a_n=n$, 
$ \E\abs{\innprod{\bZ,\bZ'}}<\infty$
but
$\E\norm{\bZ}=\infty$, so $\E \bZ$ does not exist in  Bochner sense.
Hence the converse to Lemma \ref{LZZ}\ref{LZZB} does not hold.

In this example, as is easily seen, $\E \bZ$ exists in Pettis sense if and
only if $\sumn a_n^2p_n^2<\infty$, and then $\E \bZ=\sum_n a_np_n\be_n$.
\end{example}

\begin{example}
  \label{EZZP-}
Let $(\be_i)_i$ be an ON-basis in $\HX$, let $\xi_i\sim N(0,1)$, $i\ge1$, be
independent, and let $N$ be a positive integer-valued  random variable,
independent of $(\xi_i)_i$. Define $\bZ:=\sum_{i=1}^N \xi_i\be_i$.
Then, for any $\bx\in \HX$,
\begin{align}
  \innprod{\bZ,\bx}=\sum_{i=1}^N\innprod{\be_i,\bx}\xi_i.  
\end{align}
Conditioned on $N$, this has a normal distribution with variance
$\sum_1^N\innprod{\be_i,\bx}^2\le \norm{\bx}^2$.
Hence,
\begin{align}
\E\bigpar{\abs{\innprod{\bZ,\bx}}\bigmid N}
=\sqrt{\frac{2}{\pi}}\Bigpar{\sum_{i=1}^N\innprod{\be_i,\bx}^2}\qq
\le\norm{\bx}
\end{align}
and thus $\E\bigabs{\innprod{\bZ,\bx}}\le\norm{\bx}<\infty$. Consequently, 
$\E\bZ$ exists in Pettis sense. (With $\E\bZ=0$, by symmetry.)

On the other hand, if 
$N'\eqd N$ and $\xi'_i\sim N(0,1)$ are independent of each other and of
$N$ and $(\xi_i)_i$, so
$\bZ':=\sum_{i=1}^{N'} \xi'_i\be_i$ is an independent copy of $\bZ$, then
$\innprod{\bZ,\bZ'}=\sum_1^{N\bmin N'}\xi_i\xi'_i$.
The sequence $(\xi_i\xi'_i)_i$ is \iid{} with mean 0 and variance
$\E\sqpar{(\xi_i\xi'_i)^2}=\E\sqpar{\xi_i^2}\E\sqpar{(\xi'_i)^2}=1$, 
and thus by the central limit theorem,
for some $c>0$ and every $n\ge0$,
\begin{align}\label{pta}
  \E\bigpar{\abs{\innprod{\bZ,\bZ'}}\bigmid N\bmin N'=n}
=\E\Bigabs{\sum_1^{n}\xi_i\xi'_i}
\ge c\sqrt n.
\end{align}
Hence,
\begin{align}\label{pt}
  \E\abs{\innprod{\bZ,\bZ'}}
&\ge c\E \sqrt{N\bmin N'}
=c\intoo\P\bigpar{\sqrt{N\bmin N'}>t}\dd t
\notag\\&
=c\intoo\P\bigpar{N>t^2,\, N'>t^2}\dd t
=c\intoo\P\bigpar{N>t^2}^2\dd t.
\end{align}
Choose  $N$ with
$\P(N>n)=n^{-\gam}$ for $n\ge1$, where $0<\gam\le\frac{1}4$.
Then $\P(N>t)\ge t^{-\gam}$ for $t\ge1$, and \eqref{pt} yields
$
  \E\abs{\innprod{\bZ,\bZ'}}\ge c\int_1^\infty t^{-4\gam}\dd t=\infty
$. 
Consequently, $\E \bZ$ exists in Pettis sense, but \eqref{EZZ} does not hold.
Hence, the converse to \refL{LZZ}\ref{LZZP} does not hold.

Note also that \eqref{pta} and \eqref{pt} hold in the opposite direction
with another $c$; hence, in this example, \eqref{EZZ} holds if we take
$\gam>\frac{1}4$.
Moreover, $\norm{\bZ}=\bigpar{\sum_1^N\xi_i^2}\qq$, and it follows from
the law of large numbers that
$\E\bigpar{\norm{\bZ}\mid N=n}\sim \sqrt n$ as \ntoo, and thus, if
$\gam\le\frac12$, we have
$\E\norm{\bZ} \ge c\E N\qq=\infty$.
Consequently, taking $\gam\in(\frac{1}4,\frac12]$ gives another example
showing that the converse to \ref{LZZB} does not hold.
\end{example}

Recall that  a \emph{\HS{} operator} $T:\HX\to\HY$, where $\HX$ and $\HY$
are Hilbert spaces, is a linear operator such that if $(\be_i)_i$ is an
ON-basis in $\HX$, then
\begin{align}\label{HS}
  \normHS{T}^2:=\sum_i\norm{T\be_i}^2<\infty.
\end{align}
(This is independent of the choice of basis $(\be_i)_i$.) See \eg
\cite[\S30.8]{Lax} or \cite[Exercise IX.2.19]{Conway}.
The following lemma is a version of the fact that a \HS{} operator is
absolutely 1-summing \cite[Theorem 2.5.5]{Pietsch}.
\begin{lemma}\label{LHS}
Let $\HX$ and $\HY$ be separable Hilbert spaces,
  let $\bX$ be \rv{} in  $\cH$ such that $\E\bX$ exists in Pettis sense, and
  let $T:\HX\to\HY$ be a \HS{} operator.
Then $\E\norm{T\bX}<\infty$.
\end{lemma}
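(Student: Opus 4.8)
The plan is to reduce the whole statement to the single estimate \eqref{PC}, \ie{} $\E\abs{\innprod{\bX,\bx}}\le C\norm{\bx}$ for every fixed $\bx\in\HX$, which is available precisely because $\E\bX$ exists in Pettis sense. The naive approach---expand $\bX$ in an ON-basis $(\be_i)_i$ of $\HX$, so that $T\bX=\sum_i\innprod{\bX,\be_i}T\be_i$, and bound termwise using \eqref{PC}---only produces the bound $C\sum_i\norm{T\be_i}$, and this fails since Hilbert--Schmidt gives $\sum_i\norm{T\be_i}^2<\infty$ but not $\sum_i\norm{T\be_i}<\infty$. The main obstacle is thus bridging this $\ell^1$--$\ell^2$ gap, and I would do so by a Gaussian randomisation that rewrites the $\ell^2$-quantity $\norm{T\bX}$ as a single inner product of $\bX$ against a random vector, to which \eqref{PC} does apply.

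First I would fix an ON-basis $(\bex_j)_j$ of $\HY$ and, writing $b_j:=\innprod{\bX,T^*\bex_j}$, use Parseval together with $\innprod{T\bX,\bex_j}=\innprod{\bX,T^*\bex_j}$ to get
\begin{align*}
\norm{T\bX}^2=\sum_j\innprod{T\bX,\bex_j}^2=\sum_j b_j^2,
\end{align*}
noting that $\sum_j\norm{T^*\bex_j}^2=\normHS{T^*}^2=\normHS{T}^2<\infty$. Next, let $(\gamma_j)_j$ be \iid{} $N(0,1)$ variables independent of $\bX$. For each fixed value of $\bX$ the $b_j$ are square-summable, hence $\sum_j b_j\gamma_j\sim N\bigpar{0,\sum_j b_j^2}$ and the elementary Gaussian identity $\E\abs{Z}=\sqrt{2/\pi}\,(\Var Z)\qq$ gives
\begin{align*}
\norm{T\bX}=\Bigpar{\sum_j b_j^2}\qq=\sqrt{\pi/2}\,\E_\gamma\Bigabs{\sum_j b_j\gamma_j},
\end{align*}
where $\E_\gamma$ integrates over the Gaussians only.

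Then I would introduce the $\HX$-valued random vector $\bW:=\sum_j\gamma_j\,T^*\bex_j$; it converges \as{} and in $L^2$ since $\E\sum_j\gamma_j^2\norm{T^*\bex_j}^2=\normHS{T}^2<\infty$, and it is independent of $\bX$. Continuity of $\innprod{\bX,\cdot}$ yields $\sum_j b_j\gamma_j=\innprod{\bX,\bW}$, so by Tonelli
\begin{align*}
\E\norm{T\bX}=\sqrt{\pi/2}\,\E\bigabs{\innprod{\bX,\bW}}.
\end{align*}
Conditioning on $(\gamma_j)_j$, the vector $\bW$ is fixed and independent of $\bX$, so \eqref{PC} gives $\E_{\bX}\abs{\innprod{\bX,\bW}}\le C\norm{\bW}$; averaging over the Gaussians and using $\E\norm{\bW}\le\bigpar{\E\norm{\bW}^2}\qq=\normHS{T}$ we conclude
\begin{align*}
\E\norm{T\bX}\le\sqrt{\pi/2}\,C\,\normHS{T}<\infty.
\end{align*}

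Apart from the $\ell^1$--$\ell^2$ gap handled by the randomisation, the only points needing care are the infinite-sum manipulations: the \as{} convergence of $\bW$ in $\HX$ and the interchange of $\innprod{\bX,\cdot}$ with the sum, both immediate from square-summability and separability, and the Tonelli step, which is legitimate because the integrand is non-negative. Every remaining ingredient is a direct invocation of \eqref{PC}.
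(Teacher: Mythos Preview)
Your proof is correct and follows the same core strategy as the paper: represent $\norm{T\bX}$ as (a constant times) the expectation of $\abs{\innprod{\bX,\bZ}}$ for a suitable random vector $\bZ\in\HX$ independent of $\bX$, then invoke \eqref{PC} and bound $\E\norm{\bZ}$ by $\normHS{T}$.

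The implementations differ in two minor respects. The paper first diagonalises $T^*T$ to obtain the singular values $s_i$ and an ON-basis $(\be_i)_i$ of $\HX$, then sets $\bZ:=\sum_i s_i\eps_i\be_i$ with Rademacher signs $\eps_i$ and uses Khintchine's inequality to get $\norm{T\bx}\le C\,\E_\eps\abs{\innprod{\bx,\bZ}}$. You instead work with an arbitrary ON-basis $(\bex_j)_j$ of $\HY$, set $\bW:=\sum_j\gamma_j T^*\bex_j$ with Gaussians $\gamma_j$, and obtain the exact identity $\norm{T\bx}=\sqrt{\pi/2}\,\E_\gamma\abs{\innprod{\bx,\bW}}$. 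Your route avoids the spectral decomposition and Khintchine's inequality, and yields an explicit constant; the paper's route keeps $\norm{\bZ}=\normHS{T}$ deterministic rather than only in mean. Both are equally valid executions of the same randomisation idea.
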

\begin{proof}
  Since $T$ is a \HS{} operator, $T^*T$ is a positive self-adjoint trace
  class operator in $\HX$, and thus there exists an ON-basis $(\be_i)_i$ in
  $\HX$ consisting of eigenvectors, so $T^*T\be_i=\gl_i\be_i$, where
  $\gl_i\ge0$ and  
  \begin{align}\label{gli}
\sum_i\gl_i=\normHS{T}^2<\infty.     
  \end{align}
(See again \eg \cite[\S30]{Lax} and \cite[Exercise IX.2.19]{Conway}.)
Let $s_i:=\gl\qq$. (These are known as the \emph{singular values} of $T$.)
Then, for any $\bx\in\HX$,
\begin{align}\label{kirke}
  \norm{T\bx}^2
&=\innprod{T^*T\bx,\bx}
=\sum_i \innprod{T^*T\bx,\be_i}\innprod{\bx,\be_i}
=\sum_i \innprod{\bx,T^*T\be_i}\innprod{\bx,\be_i}
\notag\\&
=\sum_i \gl_i\innprod{\bx,\be_i}\innprod{\bx,\be_i}
=\sum_i s^2_i\innprod{\bx,\be_i}^2.
\end{align}

Let $(\eps_i)_i$ be \iid random variables with
$\P(\eps_i=1)=\P(\eps_i=-1)=\frac12$, and let them also be
independent of $\bX$. 
Let 
\begin{align}
  \label{zeus}
\bZ:=\sum_is_i\eps_i\be_i,
\end{align}
where the sum converges in $\HX$ (surely)
since $\sum_is_i^2<\infty$ by \eqref{gli}.
Let $\bx\in\HX$ and note that
\begin{align}\label{hera}
\innprod{\bx,\bZ}=  \sum_is_i\innprod{\bx,\be_i}\eps_i.
\end{align}
Hence,  using \eqref{kirke},
\begin{align}\label{erik}
\E\abs{\innprod{\bx,\bZ}}^2=
  \E\Bigabs{\sum_is_i\innprod{\bx,\be_i}\eps_i}^2
=\sum_i s^2_i\innprod{\bx,\be_i}^2
=\norm{T\bx}^2.
\end{align}
Moreover,  Khintchine's inequality 
\cite[Lemma 3.8.1]{Gut}
applies to \eqref{hera} and yields
\begin{align}\label{andromeda}
\bigpar{\E\abs{\innprod{\bx,\bZ}}^2}\qq
\le C
\E\abs{\innprod{\bx,\bZ}}.
\end{align}
Combining \eqref{erik} and \eqref{andromeda} we find
\begin{align}\label{io}
\norm{T\bx}
\le C
\E\abs{\innprod{\bx,\bZ}}.
\end{align}
Let $\E_\bX$ and $\E_\eps$
denote integration over $\bX$ and $(\eps_i)$, respectively.
Then \eqref{io} yields
$
\norm{T\bX}
\le C
\E_\eps\abs{\innprod{\bX,\bZ}}
$
and thus
\begin{align}\label{ia}
\E  \norm{T\bX}
\le C
\E_\bX\E_\eps\abs{\innprod{\bX,\bZ}}
= C
\E\abs{\innprod{\bX,\bZ}}.
\end{align}
On the other hand, \eqref{PC}
yields, using also the definition \eqref{zeus} and \eqref{gli},
\begin{align}\label{ic}
\E_\bX\abs{\innprod{\bX,\bZ}}\le C\norm{\bZ}
  =C\Bigpar{\sum_is_i^2}\qq
=C\normHS{T}.
\end{align}
Thus,
\begin{align}\label{ib}
\E\abs{\innprod{\bX,\bZ}}
=\E
\E_\bX\abs{\innprod{\bX,\bZ}}
\le C\normHS{T}<\infty.
\end{align}
The result follows by \eqref{ia} and \eqref{ib}.
\end{proof}

\begin{remark}
  \refE{EZZB-} shows that the result in \refL{LHS} does not hold for $T=I$,
  the identity operator (if $\dim\HX=\infty$).
In fact, the result holds if and only if $T$ is \HS: if $T$ is a bounded
operator that is not \HS, then there exists $\bX$ such that $\E\bX$ exists
but $\E\norm{T\bX}=\infty$; this can be seen by a modification of \refE{EZZB-}.
(We omit the details.)
\end{remark}

\begin{lemma}\label{LHTH}
  Let $\bX$ and $\bY$ be independent
random variables with values in  separable Hilbert
  spaces $\HX$ and $\HY$.
If\/ $\E\bX$ and $\E\bY$ exist in Pettis sense, then $\E[\bX\tensor\bY]$
exists in Pettis sense, in $\HX\tensor\HY$, and
$\E[\bX\tensor\bY]=(\E\bX)\tensor(\E\bY)$.
\end{lemma}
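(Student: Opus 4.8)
The plan is to identify the Hilbert space tensor product $\HX\tensor\HY$ with the space of \HS{} operators $\HX\to\HY$ and then to reduce the integrability of $\bX\tensor\bY$ to \refL{LHS}. Concretely, for $\bw\in\HX\tensor\HY$ let $T_\bw:\HX\to\HY$ be the operator determined by $T_\bw\bx:=\innprod{\bx,\bx_0}\by_0$ when $\bw=\bx_0\tensor\by_0$, extended by linearity and continuity. Writing $\bw=\sum_{i,j}c_{ij}\be_i\tensor\bex_j$ in ON-bases, one checks $T_\bw\be_i=\sum_j c_{ij}\bex_j$, so that $\normHS{T_\bw}^2=\sum_{i,j}c_{ij}^2=\norm{\bw}^2$ by \eqref{HS}; thus $\bw\mapsto T_\bw$ is an isometry onto the \HS{} operators. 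The identity I would record at the outset is that, by \eqref{tensor},
\begin{align*}
  \innprod{\bX\tensor\bY,\bw}=\innprod{\bY,T_\bw\bX}_{\HY}\textas,
\end{align*}
which holds for elementary tensors $\bw$ by direct computation and extends to all $\bw$ by linearity and continuity (both sides being linear in $\bw$ for fixed $\bX,\bY$).

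Next I would prove that $\E[\bX\tensor\bY]$ exists in Pettis sense, \ie{} that $\E\bigabs{\innprod{\bX\tensor\bY,\bw}}<\infty$ for every $\bw$. Conditioning on $\bX$ and using that $\bY$ is independent of $\bX$ together with the Pettis bound \eqref{PC} for $\bY$ (with a constant $C_\bY<\infty$) gives $\E\bigpar{\abs{\innprod{\bY,T_\bw\bX}}\mid\bX}\le C_\bY\norm{T_\bw\bX}$, whence
\begin{align*}
  \E\bigabs{\innprod{\bX\tensor\bY,\bw}}\le C_\bY\,\E\norm{T_\bw\bX}.
\end{align*}
Since $T_\bw$ is a \HS{} operator and $\E\bX$ exists in Pettis sense, \refL{LHS} yields $\E\norm{T_\bw\bX}<\infty$. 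This is the step I expect to be the crux: for a general, non-elementary $\bw$ the naive \CSineq{} bound would require $\E\norm{\bX}^2$ and $\E\norm{\bY}^2$, and it is precisely the absolute $1$-summing property of \HS{} operators packaged in \refL{LHS} that allows one to avoid second moments.

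Finally I would identify the value. Writing $\mathbf m:=\E[\bX\tensor\bY]$, for each $\bw$ I would compute $\innprod{\mathbf m,\bw}=\E\innprod{\bY,T_\bw\bX}$ by iterated conditioning: conditioning on $\bX$ and using the Pettis property of $\bY$ gives $\E\bigpar{\innprod{\bY,T_\bw\bX}\mid\bX}=\innprod{\E\bY,T_\bw\bX}=\innprod{T_\bw^{*}\E\bY,\bX}$ a.s.; then, since $\bx\mapsto\innprod{T_\bw^{*}\E\bY,\bx}$ is a bounded functional and $\E\bX$ exists in Pettis sense, taking the expectation yields
\begin{align*}
  \innprod{\mathbf m,\bw}=\innprod{T_\bw^{*}\E\bY,\E\bX}=\innprod{\E\bY,T_\bw\E\bX}=\innprod{(\E\bX)\tensor(\E\bY),\bw},
\end{align*}
using the displayed identity once more with $\bX,\bY$ replaced by the constants $\E\bX,\E\bY$. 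As this holds for all $\bw\in\HX\tensor\HY$, we conclude $\mathbf m=(\E\bX)\tensor(\E\bY)$, completing the proof.
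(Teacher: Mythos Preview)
Your proof is correct and follows essentially the same route as the paper: identify $\HX\tensor\HY$ with the \HS{} operators via $\bw\mapsto T_\bw$, use \eqref{PC} for $\bY$ to bound $\E\bigabs{\innprod{\bX\tensor\bY,\bw}}$ by $C_\bY\E\norm{T_\bw\bX}$, and then invoke \refL{LHS}. The only cosmetic difference is the final identification of the value: the paper checks $\innprod{\E[\bX\tensor\bY],\be_i\tensor\bex_j}=\innprod{(\E\bX)\tensor(\E\bY),\be_i\tensor\bex_j}$ on basis tensors, whereas you argue for arbitrary $\bw$ via the adjoint $T_\bw^{*}$---both are fine.
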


\begin{proof}
  Let $\bz\in \HX\tensor\HY$, and define a linear operator $\Tz:\HX\to\HY$
  by
  \begin{align}\label{Tz}
    \innprod{\Tz\bx,\by}=\innprod{\bx\tensor\by,\bz}.
  \end{align}
Let $(\be_i)_i$ and $(\bex_j)_j$ be ON-bases in $\HX$ and $\HY$. Then 
$(\be_i\tensor\bex_j)_{i,j}$ is an ON-basis in $\HX\tensor\HY$, and thus,
using \eqref{HS} and \eqref{Tz},
\begin{align}
  \normHS{\Tz}^2
&=\sum_i\norm{\Tz\be_i}^2
=\sum_i\sum_j\innprod{\Tz\be_i,\bex_j}^2
=\sum_i\sum_j\innprod{\be_i\tensor\bex_j,\bz}^2
\notag\\&
=\norm{\bz}^2<\infty,
\end{align}
and thus $\Tz$ is a \HS{} operator.
(In fact, as is well-known, it is easy to see that $\bz\mapsto\Tz$ yields an
isometry between $\HX\tensor\HY$ and the space of \HS{} operators
$\HX\to\HY$.)
Hence, \refL{LHS} applies and shows $\E\norm{\Tz\bX}<\infty$.

Furthermore, since $\bY$ is Pettis integrable, \eqref{Tz} and
\eqref{PC} show that for every $\bx\in\HX$,
\begin{align}
  \E|\innprod{\bx\tensor\bY,\bz}|
=
\E\abs{\innprod{\Tz\bx,\bY}}
\le C\norm{\Tz\bx}.
\end{align}
Consequently, with $\E_\bY$ denoting the integral over $\bY$,
\begin{align}
  \E|\innprod{\bX\tensor\bY,\bz}|
=\E\E_\bY|\innprod{\bX\tensor\bY,\bz}|
\le C\E\norm{\Tz\bX}
<\infty.
\end{align}
Since $\bz\in\HX\tensor\HY$ is arbitrary, this shows that $\bX\tensor\bY$ is
Pettis integrable, \ie, that $\E[\bX\tensor\bY]$ exists in Pettis sense.

Finally, by \eqref{pettis}, \eqref{tensor} and independence,
for any $\be_i$ and $\bex_j$ in the bases,
\begin{align}
\innprod{\E[\bX\tensor\bY],\be_i\tensor\bex_j}
&=
\E\innprod{\bX\tensor\bY,\be_i\tensor\bex_j}
=
\E\bigsqpar{\innprod{\bX,\be_i}\innprod{\bY,\bex_j}}
\notag\\&
=\E\sqpar{\innprod{\bX,\be_i}}\E\sqpar{\innprod{\bY,\bex_j}}
={\innprod{\E\bX,\be_i}}{\innprod{\E\bY,\bex_j}}
\notag\\&
=
\innprod{(\E\bX)\tensor(\E\bY),\be_i\tensor\bex_j}.
\end{align}
Since the set of such $\be_i\tensor\bex_j$ is a basis,
$\E\sqpar{\bX\tensor\bY}=(\E\bX)\tensor(\E\bY)$ follows.
\end{proof}

\begin{remark}
In this paper we consider only the Hilbert space tensor product
defined in \refS{S2}.
Nevertheless, we note that \refL{LHTH} \emph{a fortiori} holds also for the
injective tensor product $\HX\check\tensor\HY$,
since there is a natural continuous mapping 
$\HX\tensor\HY\to\HX\check\tensor\HY$ mapping $\bx\tensor\by\mapsto
\bx\tensor\by$. 
On the other hand, the result does not hold for the projective tensor
product
$\HX\widehat\tensor\HY$, which can be seen as follows:
Let $\HX=\HY$ and note that then
$\bx\tensor\by\mapsto\innprod{\bx,\by}$ extends to a continuous linear
functional on
$\HX\widehat\tensor\HY$. Hence,
if $\E[\bX\tensor\bY]$ exists in $\HX\widehat\tensor\HY$, then
$\E\innprod{\bX,\bY}$ exists in $\bbR$, so
$\E|\innprod{\bX,\bY}|<\infty$, but \refE{EZZP-} shows that this does not
always hold for independent Pettis integrable $\bX$ and $\bY$.
\end{remark}

\newcommand\AAP{\emph{Adv. Appl. Probab.} }
\newcommand\JAP{\emph{J. Appl. Probab.} }
\newcommand\JAMS{\emph{J. \AMS} }
\newcommand\MAMS{\emph{Memoirs \AMS} }
\newcommand\PAMS{\emph{Proc. \AMS} }
\newcommand\TAMS{\emph{Trans. \AMS} }
\newcommand\AnnMS{\emph{Ann. Math. Statist.} }
\newcommand\AnnPr{\emph{Ann. Probab.} }
\newcommand\CPC{\emph{Combin. Probab. Comput.} }
\newcommand\JMAA{\emph{J. Math. Anal. Appl.} }
\newcommand\RSA{\emph{Random Structures Algorithms} }
\newcommand\DMTCS{\jour{Discr. Math. Theor. Comput. Sci.} }

\newcommand\AMS{Amer. Math. Soc.}
\newcommand\Springer{Springer-Verlag}
\newcommand\Wiley{Wiley}

\newcommand\vol{\textbf}
\newcommand\jour{\emph}
\newcommand\book{\emph}
\newcommand\inbook{\emph}
\def\no#1#2,{\unskip#2, no. #1,} 
\newcommand\toappear{\unskip, to appear}

\newcommand\arxiv[1]{\texttt{arXiv:#1}}
\newcommand\arXiv{\arxiv}
\newcommand\etal{et al\punkt}

\def\nobibitem#1\par{}

\end{document}